\documentclass[final,12pt]{elsarticle}
\usepackage{srcltx}
\usepackage{eurosym}
\usepackage{mathtools}
\usepackage{amsmath}
\usepackage{cases}
\usepackage{amsfonts}
\usepackage{amssymb}
\usepackage{amsthm}
\usepackage[utf8]{inputenc}
\usepackage{chngcntr}
\usepackage{apptools}
\usepackage{graphicx}

\usepackage[dvipsnames]{xcolor}

\usepackage{graphicx}
\usepackage{mathrsfs}
\usepackage{xcolor}
\usepackage{xpatch}
\usepackage{exscale}
\usepackage{latexsym}
\usepackage{tikz}
\usepackage{caption}
\usepackage{environ}
\usetikzlibrary{arrows.meta}

\usepackage{cancel,ulem}
\xpatchcmd{\MaketitleBox}{\hrule}{}{}{}% remove first horizontal rule (above abstract)
\xpatchcmd{\MaketitleBox}{\hrule}{}{}{}
\usepackage[colorlinks,plainpages=true,pdfpagelabels,hypertexnames=true,colorlinks=true,pdfstartview=FitV,linkcolor=blue,citecolor=red,urlcolor=black]{hyperref}
\PassOptionsToPackage{unicode}{hyperref}
\PassOptionsToPackage{naturalnames}{hyperref}
\usepackage{enumerate}
\usepackage[shortlabels]{enumitem}
\usepackage{bookmark}
\usepackage{wasysym}
\usepackage{esint}
\usepackage[ddmmyyyy]{datetime}
\usepackage[margin=2cm]{geometry}
\makeatletter
\g@addto@macro\normalsize{%
	\setlength\abovedisplayskip{4pt}
	\setlength\belowdisplayskip{4pt}
	\setlength\abovedisplayshortskip{4pt}
	\setlength\belowdisplayshortskip{4pt}
}
\numberwithin{equation}{section}
\everymath{\displaystyle}
\usepackage[capitalize,nameinlink]{cleveref}
\crefname{section}{Section}{Sections}
\crefname{subsection}{Subsection}{Subsections}
\crefname{condition}{Condition}{Conditions}
\crefname{hypothesis}{Hypothesis}{Conditions}
\crefname{assumption}{Assumption}{Assumptions}
\crefname{lemmaa}{Lemma}{Lemmas}
\crefname{lemma}{Lemma}{Lemmas}
\crefname{definition}{Definition}{Definitions}
\crefname{figure}{figure}{figures}
\crefname{question}{Question}{Questions}

\crefformat{equation}{\textup{#2(#1)#3}}
\crefrangeformat{equation}{\textup{#3(#1)#4--#5(#2)#6}}
\crefmultiformat{equation}{\textup{#2(#1)#3}}{ and \textup{#2(#1)#3}}
{, \textup{#2(#1)#3}}{, and \textup{#2(#1)#3}}
\crefrangemultiformat{equation}{\textup{#3(#1)#4--#5(#2)#6}}%
{ and \textup{#3(#1)#4--#5(#2)#6}}{, \textup{#3(#1)#4--#5(#2)#6}}%
{, and \textup{#3(#1)#4--#5(#2)#6}}

\Crefformat{equation}{#2Equation~\textup{(#1)}#3}
\Crefrangeformat{equation}{Equations~\textup{#3(#1)#4--#5(#2)#6}}
\Crefmultiformat{equation}{Equations~\textup{#2(#1)#3}}{ and \textup{#2(#1)#3}}
{, \textup{#2(#1)#3}}{, and \textup{#2(#1)#3}}
\Crefrangemultiformat{equation}{Equations~\textup{#3(#1)#4--#5(#2)#6}}%
{ and \textup{#3(#1)#4--#5(#2)#6}}{, \textup{#3(#1)#4--#5(#2)#6}}%
{, and \textup{#3(#1)#4--#5(#2)#6}}

\crefdefaultlabelformat{#2\textup{#1}#3}
\numberwithin{equation}{section}

\newtheorem{theorem} {Theorem}[section]
\newtheorem{proposition} {Proposition}[section]

\newtheorem{lemma}{Lemma}[section]

\newtheorem{counter-example}{counter-example}[section]
\newtheorem{remark} {Remark}[section]
\newtheorem{definition} {Definition}[section]
\newtheorem{question} {Question}[section]
\newtheorem{innercustomgeneric}{\customgenericname}
\providecommand{\customgenericname}{}
\newcommand{\newcustomtheorem}[2]{%
	\newenvironment{#1}[1]
	{%
		\renewcommand\customgenericname{#2}%
		\renewcommand\theinnercustomgeneric{##1}%
		\innercustomgeneric
	}
	{\endinnercustomgeneric}
}

\newcustomtheorem{customthm}{Theorem}
\newcustomtheorem{customlemma}{Lemma}

\def\CC{{\rm \kern.24em \vrule width.02em height1.4ex depth-.05ex \kern-.26emC}}

\def\TagOnRight
\def\AA{{it I} \hskip-3pt{\tt A}}

\def\QQ{\rlap {\raise 0.4ex \hbox{$\scriptscriptstyle |$}} {\hskip -0.1em Q}}

\makeatletter
\newcommand{\vo}{\vec{o}\@ifnextchar{^}{\,}{}}
\makeatother
\def\YYint#1#2#3{{\setbox0=\hbox{$#1{#2#3}{\iint}$}
		\vcenter{\hbox{$#2#3$}}\kern-.50\wd0}}
\def\XXint#1#2#3{{\setbox0=\hbox{$#1{#2#3}{\int}$}
		\vcenter{\hbox{$#2#3$}}\kern-.50\wd0}}

\makeatletter
\def\namedlabel#1#2{\begingroup
	\def\@currentlabel{#2}%
	\label{#1}\endgroup
}
\makeatother
\makeatletter
\newcommand{\rmh}[1]{\mathpalette{\raisem@th{#1}}}
\newcommand{\raisem@th}[3]{\hspace*{-1pt}\raisebox{#1}{$#2#3$}}
\makeatother

\newcommand{\descref}[2]{\hyperref[#1]{\textnormal{\textcolor{black}{}\textcolor{blue}{\bf #2}\textcolor{black}{}}}}
\newcommand{\dref}[2]{\hyperref[#1]{\textcolor{black}{(}\textcolor{blue}{\bf #2}\textcolor{black}{)}}}
\newcommand{\be} {\begin{eqnarray}}
	\newcommand{\ee} {\end{eqnarray}}
\newcommand{\Bea} {\begin{eqnarray*}}
	\newcommand{\Eea} {\end{eqnarray*}}
\newcommand{\rr}{\rightarrow}

\newcommand{\e}  {\epsilon}

\newcommand{\f}{\infty}

\newcommand{\R}{\mathbb{R}}

\DeclareMathOperator{\TV}{TV}

\DeclareMathOperator{\dy}{d}
\newcommand{\abs}[1]{\left| #1\right|}

\newcommand{\cd}{\mathcal{D}}

\newcommand{\RN}[1]{%
	\textup{\uppercase\expandafter{\romannumeral#1}}%
}

\newcounter{whitney}
\refstepcounter{whitney}

\newcounter{ineqcounter}
\refstepcounter{ineqcounter}
\makeatletter
\def\ps@pprintTitle{%
	\let\@oddhead\@empty
	\let\@evenhead\@empty
	\def\@oddfoot{}%
	\let\@evenfoot\@oddfoot}
\makeatother
\usepackage[doublespacing]{setspace}
\usepackage[titletoc,toc,page]{appendix}

\usepackage{pgf,tikz}
\usetikzlibrary{arrows}
\usetikzlibrary{decorations.pathreplacing}
\allowdisplaybreaks
\usepackage{cancel,soul,ulem}

\usepackage{etoolbox}
\usepackage{hyperref}

\usepackage{lipsum}% just for the example

\makeatletter
\def\@mkboth#1#2{}
\newlength\appendixwidth
\preto\appendix{\addtocontents{toc}{\protect\patchl@section}}
\newcommand{\patchl@section}{%
	\settowidth{\appendixwidth}{\textbf{Appendix }}%
	\addtolength{\appendixwidth}{1.5em}%
	\patchcmd{\l@section}{1.5em}{\appendixwidth}{}{\ddt}%
}
\makeatother

\begin{document}
	
	\begin{frontmatter}
		
		\title{{Existence and stability estimates for weak solutions of $p$-systems by front tracking scheme}
		}

		% \author[myaddress]{Adimurthi\tnoteref{thanksfirstauthor}}
		% \cortext[mycorrespondingauthor]{Corresponding author}
		% \ead{aditi@math.tifrbng.res.in and adiadimurthi@gmail.com}
		% \tnotetext[thanksfirstauthor]{Supported in part by Rajaramanna Fellowship.}
		% \author[myaddress1]{Piotr Gwiazda}
		% \ead{pgwiazda@mimuw.edu.pl}
		\author[myaddress]{Akash Parmar}
		\ead{a.parmar2@uw.edu.pl}
		% \author[myaddress]{Agnieszka Świerczewska-Gwiazda}
		% % \ead{aswiercz@mimuw.edu.pl}
	%\address[myaddress1]{Institute of Mathematics of Polish Academy of Sciences, Poland}
		\address[myaddress]{Faculty of Mathematics, Informatics, and Mechanics, University of Warsaw, Poland}
		
		%\date{\today}
		%\begin{center}{January 09, 2022} \end{center} 
		
		\begin{abstract}
We prove the existence of global weak entropy solutions to the $p$-system with the piecewise affine flux function. The construction is based on a wave-front tracking scheme for which we identify a necessary and sufficient condition for the occurrence of infinitely many fronts. For smooth fluxes, the theory is well established. However, the lower Lipschitz regularity of the flux requires a new method to obtain the interaction estimates. We also prove the stability of the solution in the sense of \cite{bia-col-02}, showing that solutions are Lipschitz continuous with respect to the $L^{\infty}$ norm of the difference of the flux derivatives. We obtain the convergence rate of the solutions from the piecewise affine flux to smooth flux by using the stability estimate.  
		\end{abstract}

		\begin{keyword}
			  $p$-system \sep Cauchy problem\sep Stability \sep $BV$ functions.
			\MSC[2020] 35B65\sep35L65  \sep 35F25 \sep 35L67  \sep 26A45.
		\end{keyword}
	\end{frontmatter}
%	\begin{center}{January 09, 2022} \end{center} 
	%
	%
	\tableofcontents
	\section{Introduction}\label{introduction}
	The main aim of this article is to study the existence and stability results for the $p$-system by the means of the front tracking method. The p-system is one of the fundamental examples of strictly hyperbolic systems of conservation laws and  the $p$-system holds significant importance from an application perspective, as it naturally arises in the modeling of gas dynamics, traffic flow, fluid dynamics, and wave propagation \cite{LWR, traffic, smoller, dafermos2005hyperbolic}. Precisely, we discuss the following form of the $p$-system:
	 \begin{eqnarray}\label{p-sys}
	 \left\{\begin{array}{rlll}
	   v_{t}-u_{x}&=&0,  &x\in\R, t>0,\\
	    u_{t}+p(v)_{x}&=&0, &x\in\R, t>0,\\
	    (v(x,0), u(x,0))&=&(v_{0}(x), u_{0}(x)), &x\in\R,
	 \end{array}\right.
\end{eqnarray}
where $v>0, u$ and $p(v)=v^{-\gamma}$, for $\gamma>1$ are the specific volume, velocity and pressure of the fluid, respectively, and $(v_{0}, u_{0})\in BV\times BV$ is the initial data with $v_{0}>0$. 

We start with existing literature on the $p$-system \eqref{p-sys} to providing the background and context of our work. The existence and stability theory for system \eqref{p-sys} has been extensively studied in the framework of small and large BV solutions. Beginning with the pioneering work of Riemann \cite{riemann-60} who analyzed the Riemann problem for this system. For the existence of weak entropy solutions one can use several different method such as:  Glimm scheme \cite{Glimm}, wavefront tracking method \cite{diperna-1, bressan-front}, vanishing viscosity method \cite{bia-bre}, in the large BV framework \cite{nishida1, nishida2}. Slightly different model, Euler type flocking model related to the $p$-system, in \cite{amadori1, amadori2}, the authors established the existence of BV weak entropy solution by using the front tracking method.

In our setting, we consider the following piecewise affine approximation of system \eqref{p-sys}:
\begin{eqnarray}\label{appr-p}
	 \left\{\begin{array}{rlll}
	   v_{t}-u_{x}&=&0,  &x\in\R, t>0,\\
	    u_{t}+p_{n}(v)_{x}&=&0, &x\in\R, t>0,\\
	    (v(x,0), u(x,0))&=&(v_{0}(x), u_{0}(x)), &x\in\R,
	 \end{array}\right.
\end{eqnarray}
where $ p_{n}(v)$ is a piecewise affine approximation of pressure term $p(v)$, and $(v_{0}, u_{0})\in BV\times BV$ is the initial data with $v_{0}>0$. This approximation is inspired by the polygonal approximation of scalar conservation laws introduced by  Dafermos \cite{daf-poly} to construct wavefront tracking method, for further reference \cite[Section 2.3]{HoldenRisebro2002} .
The piecewise affine approximations  of the $p$-system are used in \cite{chen, lin-1, liu-xin}, to establish the lower bound on the density for $p$-system. In the context of Temple class systems piecewise affine approximation has been used in \cite{bian}. 

\subsection{Existence for the system \eqref{appr-p}.}\label{sec-exi}
For smooth pressure laws in \eqref{p-sys} the existence theory is well established. However, the pressure laws $p_{n}(v)$ in \eqref{appr-p} possess only the Lipschitz regularity. Consequently, the interaction estimate available in the smooth case, which needed to obtain uniform total variation bound for the solution, not applicable directly. Indeed, these estimates rely on the smoothness of the wave curves and typically comes from the application of the implicit function theorem. Therefore, the derivation of suitable interaction estimates for \eqref{appr-p} requires a different approach. In our work, we exploit the symmetric and explicit structure of the shock and rarefaction curves of the $p$-system to establish the required estimates. Infact, the obtained interaction estimats are third order estimates.  Our construction is based on the wavefront tracking method and a central technical difficulty is that, the wavefront tracking construction may generate infinitely many fronts within finite time.
\begin{question}
Does there a classification of the class of interaction which can generate the infinitely many interactions?
\end{question}
In this article, we provide a fine analysis of wavefront interaction and identify the necessary and sufficient criteria for the production of infinitely many fronts in the wavefront tracking approximation. More precisely, we identify the class of front interactions which can produce the infinitely many fronts. Infact, we provide an example of the explicit construction which shows that the infinitely many fronts are produced at some finite time $T$. For the necessary part, we construct a counting functional to prove that the interactions outside this class can interact only finitely many times provided the initially we start with finite fronts. The our example construction proves that we may indeed generate infinitely many fronts. Thus, to extend the front-tracking construction globally in time, we adopt \cite{bressan2000hyperbolic} Bressan's nonphysical fronts method. The key issue is to prove that the total strength of all nonphysical fronts can be made arbitrarily small. We obtain this estimate by introducing a generation weighted functional, which provides the control over the strength control of the high generation fronts. Thus, while the use of  nonphysical fronts is well known, the weighted functional provides a new mechanism for controlling their total strength. Moreover, weighted functional is independent of the $p$-system and can be used for any hyperbolic systems of conservation laws.

We emphasize an additional property of the our wavefront tracking construction. There is a fundamental difference between the front tracking approximation for the scalar and the system case. For scalar conservation laws with polygonal flux approximations, the front-tracking solution can be arranged so that its states remain among the nodal values of the approximating flux. In contrast, for systems this property is not generally preserved, since the Riemann solver may produce intermediate states outside the prescribed grid. In our construction, the piecewise affine structure of the pressure law makes the intermediate states to be computed explicitly in terms of the grid points of the approximation. This explicit representation is important from a numerical point of view, since it provides a computable description of the approximate Riemann solutions.

\subsection{Stability with respect to  perturbations of flux function}\label{sec-stab}
In this article, we also discuss the stability estimates with respect to perturbations of the flux functions. More precisely,  following the framework of \cite{bia-col-02}, we show that the $L^{1}$ norm between two standard Riemann semigroup, hereafter referrd to as SRS, is controlled by the $L^{\f}$  norm of the difference of the derivatives of the corresponding flux functions. Namely, let $u^f$ and $u^g$ be two solutions to systems of hyperbolic conservation laws with the same initial data but with different fluxes $f$ and $g$ respectively, 
$ u_t^f+f(u^f)_x=0 $
and
$ u_t^g+g(u^g)_x=0 $.
 Then the $L^1$ norm of the solutions can be estimated by
$$\|u^f(t)-u^g(t)\|_{L^1}\le C\|\nabla_uf(\cdot)-\nabla_ug(\cdot)\|_{\infty},$$
where C depends on time $t$ and BV norm of the solutions. We use these stability estimates to prove that the obtained weak solution to \eqref{appr-p} obtained from our construction converges to the SRS generated by the system \eqref{p-sys}. As a consequence, we obtain a convergence rate from solutions of the $p$-system with piecewise affine pressure laws to the corresponding solution associated with a smooth pressure law. Similar type of estimates also studied in~\cite{Chen_etal}. To provide further context to our stability result, we provide some state-of-the-art for stability of SRS in various frameworks. 

The existence and stability of the SRS for hyperbolic systems of conservation laws has been extensively studied. There are several results available for the existence of SRS: for the $2\times2$ systems \cite{bre-col}, for the general $n\times n$ systems \cite{bressan-cra}, for the $2\times2$ gas dynamics systems with large BV data \cite{col-rise}, for Temple class systems for nonconvex flux function \cite{bian}. For more references and discussions \cite{baiti, liu, bre-g}.

The stability of SRS received significant attention in the past few decades. Liu and Yang \cite{liu-stability} established the $L^{1}-$ stability of the SRS with respect to initial data for $2\times2$ hyperbolic systems of conservation laws. In \cite{bre-liu}, the authors extended $L^{1}-$stability for any $n\times n$ hyperbolic systems of conservation laws.  

Although not directly related to the present study, it is interesting to note from a broader perspective that, similar to scalar hyperbolic conservation laws, the degenerate parabolic equation
\cite{Vazquez}
\begin{equation}\label{deg-par}
    u_t=\Delta \phi(u),
\end{equation} 
  generates a non-expanding semigroup on $L^1$ space.
There are several attempts in the literature to prove the continuity of solutions with respect to $\phi$, \cite{Khazan1984, Benilan1981, Kalashnikov1978}, with \cite{Coc-Gri} being the most relevant among them. In this latter case the stability estimate has the following form
$$\|u^\phi(t)-u^\psi(t)\|_{L^1}\le C\sup_{s\in\R}|\sqrt{\phi'(s)}-\sqrt{\psi'(s)}|,$$
where $u^\phi$ and $u^\psi$ are solutions to \eqref{deg-par} with different nonlinearities $\phi$ and $\psi$ respectively. 
\subsection{Structure of the article}
The article is organized as follows: In Section \ref{sec-exi} and \ref{sec-stab} to provide some background and context we recall some existing results for existence and stability related to our work. In Section \ref{mainresults}, we state our main results with some required definitions, notations and remarks. In Section \ref{sec-scheme}, we provide some basic properties of the $p$-system and give the description of the wavefront tracking construction and fronts interaction analysis. In section \ref{app-RH},  we showed that the constructed solution is approximate weak solutions to \eqref{p-sys} and \eqref{appr-p}. In section \ref{sec-uni-bv}, we establish the main interaction estimate obtained for the piecewise affine pressure laws. Next Section \ref{uni-bv}, is dedicated to show that the obtained approximate solution satisfies the uniform BV bound for all time. Finally, Section \ref{main-estimates} provides the proof of the main existence and stability results. In \ref{inf-front-accu}, consists the explicit construction of the example which generates the infinitely many fronts in finite time, and uniform bound of the generation weighted functional.    
\section{Main results}\label{mainresults}
In this section, first we provide some definitions and notations which are required for the main results and then we state our results. We define 
\begin{align}\label{domain}
\mathcal{D}=cl\{U_{h}\in L^{1}(\mathbb{R};K\subset\mathbb{R}^{2}): U_{h} \mbox{ is a piecewise constant such that } TV(U_{h})<\delta\},
\end{align}
where $K$ is a compact set of $\R^{2}$ and $\delta>0$ is an sufficiently small constant.% One can see that $\mathcal{D}$ contains functions that have bounded total variation.
\begin{definition}[\cite{bressan2000hyperbolic}]\label{s6def1}\label{semigroup}
Let $T>0$ and $\cd\subset L^{1}(\mathbb{R};K\subset\R^{2})$ be a closed domain. A map $\mathcal{S}:\cd\times[0, T]\rr\cd$ is a standard Riemann semigroup generated by the \eqref{p-sys} if the following conditions hold:
\begin{enumerate}
	\item {\bf Semigroup property:} Every $U_{h}\in\cd$ and $t_{1},t_{2}\ge0$ satisfies
	\begin{align*}
	\mathcal{S}_{0}U_{h}=U_{h},~~ \mathcal{S}_{t_{1}}\mathcal{S}_{t_{2}}U_{h}=\mathcal{S}_{t_{1}+t_{2}}U_{h}.
	\end{align*}
	\item {\bf Lipschitz continuity:} For every $U_{h}, V_{h}$ and $t_{1},t_{2}\ge0$ there exists constants $L_{1}$ and $L_{2}$ such that 
	\begin{align*}
	||\mathcal{S}_{t_{1}}U_{h}-\mathcal{S}_{t_{2}}V_{h}||_{L^{1}}\le L_1||U_{h}-V_{h}||_{L^{1}}+L_2|t_{1}-t_{2}|.
	\end{align*}
	\item {\bf Consistency with Riemann solver:} The trajectory of map $(U_{h}, t)\mapsto\mathcal{S}_{t}U_{h}$ coincides with the solution of \eqref{p-sys} for piecewise constant initial data in $U_{h}\in\cd$ which obtained from the putting together the entropy solution of Riemann problem occurred from the jumps of $U_{h}$.     \end{enumerate}
\end{definition}
To make it self contained we give the definition of weak entropy solution to \eqref{p-sys}.
\begin{definition}[Weak solution]
We say $w:=(v,u)\in C([0,T], L^{1}_{loc}(\R);\R^{2})$ is a weak solution to \eqref{p-sys} if for all $\varphi\in C_{c}^{\f}(\R\times[0,T))$ it satisfies 
\begin{align*}
\int\limits_{0}^{T}\int\limits_{\R}(w\varphi_{t}+F(w)\varphi_{x})\dy x\dy t+\int\limits_{\R}w(x,0)\varphi(x,0)\dy x&=0,
\end{align*}
where $F(w)=(-u, p(v))$, and we call weak-entropy solution if for all non-negative $\varphi\in C_{c}^{\f}(\R\times[0,T))$ it satisfies entropy condition
\begin{align*}
\int\limits_{0}^{T}\int\limits_{\R}\eta(v,u)\varphi_{t}+Q(v,u)\varphi_{x}\dy x\dy t&\ge0,
\end{align*}
where $\eta$ is convex entropy and $Q$ is the corresponding entropy flux.
\end{definition}
\begin{definition}[Approximate front tracking weak solution]\label{app-fro-sol}
 We say $(v,u)\in C([0,T], L^{1}_{loc}(\R);\R^{2})$ is an approximate front tracking weak solution to \eqref{p-sys} if $(v,u)$ is a piecewise constant along finitely many discontinuities line in $xt$-plane and if the discontinuity line is shock/contact/rarefaction front then it agrees with the exact Riemann solution to system \eqref{appr-p}. If jump is a nonphysical front, then $v$-component of left and right state is same, and the speed of nonphysical front is zero.
 \begin{remark}
 We note that the rarefaction curve for system \eqref{appr-p} has affine structure, thus the exact solution of any Riemann problem for system \eqref{appr-p} always has piecewise constant structure. Definition \ref{app-fro-sol} has an immmediate consequence that the approximate front tracking weak solution satisfies the approximate Rankine-Hugoniot condition along every discontinuity which we establish in Subsection \ref{app-RH}. 
 \end{remark}
% \begin{align*}
% \int\limits_{0}^{T}\int\limits_{\R}(v\varphi_{t}-u\varphi_{x})\dy x\dy t+\int\limits_{\R}v(x,0)\varphi(x,0)\dy x&=0,\\
% \int\limits_{0}^{T}\int\limits_{\R}(u\varphi_{t}+p(v)\varphi_{x})\dy x\dy t+\int\limits_{\R}u(x,0)\varphi(x,0)\dy x&=0,
% \end{align*}
% and we call weak-entropy solution if for all non-negative $\varphi\in C_{c}^{\f}(\R\times[0,T))$ it satisfies entropy condition
% \begin{align*}
% \int\limits_{0}^{T}\int\limits_{\R}\eta(v,u)\varphi_{t}+Q(v,u)\varphi_{x}\dy x\dy t&\ge0,
% \end{align*}
%where $\eta$ is convex entropy and $Q$ is the corresponding entropy flux.
\end{definition}
Now, we state our main result.

\begin{theorem}[Weak solution to system \eqref{appr-p}]\label{exi-app-p} 
  For a fix $n\in\mathbb{N}$. Consider the system \eqref{appr-p} with the initial data $w_{0}(x,t):=(v_{0}(x,t), u_{0}(x,t))\in\mathcal{D}$. Then there exist global approximate wavefront tracking weak solution to \eqref{appr-p} which converges to global weak solution $w$ to \eqref{appr-p} corresponding to $w_{0}$.
  \end{theorem}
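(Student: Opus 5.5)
We prove Theorem \ref{exi-app-p} by a wavefront tracking scheme for \eqref{appr-p}, with $n$ fixed throughout and an auxiliary parameter $\nu\to\infty$ controlling the initial approximation and the nonphysical fronts.

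\textbf{Step 1: exact Riemann solver for \eqref{appr-p}.} Since $p_n$ is piecewise affine, along each affine piece of slope $-c_k^2$ the system is linear. The wave curves can therefore be written explicitly:
\begin{itemize}
\item Hugoniot curves satisfy $u_r-u_l=\mp\sqrt{-(p_n(v_r)-p_n(v_l))(v_r-v_l)}$;
\item rarefaction curves are polygonal, with $u$ piecewise affine in $v$ and breakpoints at the nodes of $p_n$.
\end{itemize}
So every Riemann problem with states in $K$ and small jump is solved by finitely many constant states separated by shocks and contact discontinuities. These contacts are the rarefaction pieces, each travelling at a constant speed $\pm c_k$. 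The intermediate states are computable from the grid nodes. We use the symmetry $u\mapsto -u$ between the two families and the monotonicity of $p_n$ to get unique solvability of the Riemann problem for small data.

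\textbf{Step 2: construction.} Approximate $w_0\in\mathcal{D}$ by piecewise constant $w_0^\nu$ with $\TV(w_0^\nu)\le \TV(w_0)$ and $w_0^\nu\to w_0$ in $L^1$. Solve every Riemann problem at $t=0$ and prolong the fronts until they interact, perturbing speeds slightly so that at most two fronts meet at a point.

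The main obstacle is that the number of fronts may become infinite in finite time, as the paper announces. We therefore follow \cite{bressan2000hyperbolic}:
\begin{itemize}
\item fix a threshold $\rho_\nu\to0$ and assign generation orders to the fronts;
\item at interactions whose product strength is below $\rho_\nu$, or which involve a nonphysical front, use a simplified solver that keeps the outgoing physical fronts and places the error into a nonphysical front;
\item this front has zero speed and equal $v$-components on either side, as in Definition \ref{app-fro-sol}.
\end{itemize}
We then show that the number of fronts and of interactions stays finite on every $[0,T]$, using a counting functional on the interactions outside the bad class together with the threshold.

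\textbf{Step 3: interaction estimates and uniform BV.} This is where the Lipschitz-only flux matters. We cannot use implicit-function expansions. Instead we use the explicit formulas of Step 1 to show that, for an interaction of an $i$-front of strength $\alpha$ with a $j$-front of strength $\beta$, the outgoing strengths satisfy $|\alpha'-\alpha|+|\beta'-\beta|\le C|\alpha||\beta|$. The symmetry gives cancellation, and we expect a cubic bound in fact. With these estimates the Glimm functional
\[
V(t)=\sum|\sigma_\alpha|,\qquad Q(t)=\sum_{\text{approaching}}|\sigma_\alpha\sigma_\beta|,\qquad \Upsilon=V+C_0Q,
\]
is nonincreasing when $\TV(w_0)<\delta$ is small. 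This gives a uniform $\TV$ bound and keeps the states in a compact set $K$ with $v>0$.

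To control the nonphysical fronts, we introduce a generation-weighted functional. The total strength of fronts of generation at least $k$ decays geometrically in $k$, so the total strength of nonphysical fronts is bounded by a quantity tending to $0$ as $\nu\to\infty$.

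\textbf{Step 4: compactness and consistency.} From the uniform $\TV$ bound and the finite propagation speed $\max_k c_k$, we get $\|w^\nu(t)-w^\nu(s)\|_{L^1}\le L|t-s|$. Helly's theorem then yields a subsequence converging in $L^1_{loc}$ to some $w$.

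Shocks and contacts satisfy the exact Rankine–Hugoniot relations for $p_n$. The nonphysical fronts contribute only small errors:
\begin{itemize}
\item the $v$-equation is unaffected, since $v$ does not jump across them and their speed is zero;
\item in the $u$-equation the error is bounded by their total strength, which tends to $0$.
\end{itemize}
Therefore the weak formulation holds in the limit, and $w$ is a global weak solution of \eqref{appr-p}. By Step 3, the nonphysical fronts make a contribution of order $\nu^{-1}$-small, which identifies the approximate solutions as approximate front tracking weak solutions.
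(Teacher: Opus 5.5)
Your overall plan is the paper's: an explicit Riemann solver for \eqref{appr-p}, zero-speed nonphysical fronts with $[v]=0$, a Glimm functional with interaction estimates read off the explicit wave curves, a generation-weighted functional plus a threshold, and Helly plus Rankine--Hugoniot in the limit. However, the simplified solver in your Step 2 fails as stated. You apply it to every interaction with product below $\rho_\nu$ while requiring the nonphysical front to have no jump in $v$. For the piecewise affine $p_n$, an admissible rarefaction front must have its $v$-support inside a single cell $[v^n_i,v^n_{i+1}]$. Take an $R_2+R_1$ interaction, $\alpha$ from $w_l$ to $w_m$ and $\beta$ from $w_m$ to $w_r$, with $v_l,v_m,v_r$ in one cell, $v_m<\min\{v_l,v_r\}$ and $v_l+v_r-v_m>v^n_{i+1}$. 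This happens for arbitrarily weak waves near a node, and then the exact outgoing $1$-wave must cross $v^n_{i+1}$. With one admissible $1$-front, one admissible $2$-front and a nonphysical front with $[v]=0$, the best you can do is stop both rarefactions at the node. That leaves a $u$-mismatch $2\Lambda^n_i(v_l+v_r-v_m-v^n_{i+1})$. For $v_l=v_r=v^n_{i+1}-\varepsilon$ and $v_m=v^n_{i+1}-3\varepsilon$ this mismatch equals $|\alpha|$, not $\mathcal O(|\alpha||\beta|)$, so your generation-weighted functional is no longer nonincreasing. The alternatives do not help either:
\begin{itemize}
\item splitting at the node is just the exact solver again;
\item allowing a $v$-jump in the nonphysical front destroys exactness of the $u$-equation and the trivial crossing rule;
\item keeping a single ``rarefaction'' that straddles the node gives a Rankine--Hugoniot defect of order $2^{-n}|\beta|$ for \eqref{appr-p}, which does not vanish as $\nu\to\infty$ with $n$ fixed.
\end{itemize}

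The paper avoids this by using the modified solver only for same-family shock--rarefaction interactions $S_i+R_i$; by Proposition \ref{finite-front}, these are the only ones that can accumulate. There the single outgoing $i$-shock connects $v_l$ directly to $v_r$ across any number of cells, so the mismatch is purely in $u$. It is $\mathcal O(|\mu_\beta||\mu_\gamma|)$ by Lemmas \ref{shock-rarefaction}--\ref{shock-shock}. All other interactions are solved exactly, and their number is controlled by the counting vector $X$. Since you already invoke such a counting functional, the fix is to drop the threshold everywhere except on $S_i+R_i$.

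You should also state why a physical front crosses a zero-speed, $[v]=0$ nonphysical front with both strengths unchanged. The reason is that the wave curves have the form $u=u_l\pm f(v_l,v)$, i.e.\ they are invariant under translations in $u$.

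Finally, in Step 4 you have the two equations swapped. Across such a front the $u$-equation $u_t+p_n(v)_x=0$ holds exactly, since $p_n(v)$ does not jump. The defect $|u_r-u_l|$ sits in the $v$-equation $v_t-u_x=0$, whose flux $-u$ does jump; this is the paper's $|\mathcal E^1_n|=|u_l-u_r|$, $\mathcal E^2_n=0$. The conclusion is unaffected.
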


\begin{theorem}[Weak solution to system \eqref{p-sys}]\label{weak-p-syst}
Consider the system \eqref{p-sys} with the initial data $w_{0}(x,t)=(v_{0}(x,t),u_{0}(x,t))$ belongs to $\mathcal{D}$. Then there exists a global approximate front tracking weak solution $w^n$ to \eqref{p-sys} which converges to weak solution  $w$ to \eqref{p-sys} corresponding to $w_{0}$. Furthermore, $w$ forms a standard Riemann semigroup corresponding to $w_{0}$.
\end{theorem}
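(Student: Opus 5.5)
The plan is to obtain the solution of \eqref{p-sys} as a limit of the front tracking solutions of the piecewise affine systems \eqref{appr-p} as $n\to\infty$. Then we identify the limit with the standard Riemann semigroup by combining the stability estimate of Section~\ref{sec-stab} with the known existence of the SRS for smooth $2\times2$ systems \cite{bre-col,bressan-cra}.

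\textbf{Step 1 (approximation of the pressure).} Choose the grid defining $p_n$ with mesh $h_n\to0$ and nodes on $p$, so that $\|p_n-p\|_{L^\infty(K)}=O(h_n^2)$ and $\|p_n'-p'\|_{L^\infty(K)}=O(h_n)$ on the compact set $K$ where $v\ge c>0$. Let $w^n$ be the front tracking approximation of Theorem~\ref{exi-app-p}, run with the $n$-th pressure and with a threshold for nonphysical fronts $\varepsilon_n\to0$. The uniform BV bound of Section~\ref{uni-bv} (via the third-order interaction estimates of Section~\ref{sec-uni-bv}) must be independent of $n$. It holds provided $\delta$ in \eqref{domain} is fixed small, since the constants there depend only on $\gamma$ and $K$. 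This gives $\TV(w^n(t))\le C\,\TV(w_0)$, $\|w^n\|_\infty\le C$, and $\|w^n(t)-w^n(s)\|_{L^1}\le L|t-s|$, uniformly in $n$.

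\textbf{Step 2 (compactness and consistency).} By Helly's theorem there is a subsequence with $w^n\to w$ in $C([0,T];L^1_{loc})$. Along every physical front, $w^n$ satisfies the Rankine--Hugoniot and entropy conditions for $p_n$ (Section~\ref{app-RH}). Replacing $p_n$ by $p$ in the weak formulation costs at most $\|p_n-p\|_\infty\,\|\varphi_x\|_{L^1}\to0$. Nonphysical fronts contribute at most $C\,\|\varphi\|_{C^1}\,T\,(\text{total strength of nonphysical fronts})$, and this quantity is $\to0$ by the generation weighted functional. Hence $w$ is a weak solution of \eqref{p-sys}. The entropy inequality passes to the limit in the same way, using that entropy pairs of \eqref{appr-p} converge to those of \eqref{p-sys} together with their derivatives in $L^\infty(K)$.

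\textbf{Step 3 (identification with the SRS).} Let $\mathcal S$ be the SRS of \eqref{p-sys} on $\mathcal D$ \cite{bre-col}. Apply the Bianchini--Colombo type estimate of Section~\ref{sec-stab} with $f=F$ and $g=F_n$ (for $F_n$, $\nabla g$ is only defined a.e., which is enough there). Its error formula bounds $\|w^n(t)-\mathcal S_t w_0\|_{L^1}$ by the sum, over fronts, of the local deviation from the Riemann solver of $F$. For physical fronts this deviation is $\le C\|p_n'-p'\|_\infty\,|\sigma|$, where $\sigma$ is the front strength, and it is controlled by the strength for nonphysical ones. This yields
\[
\|w^n(t)-\mathcal S_t w_0\|_{L^1}\le C(t,\TV(w_0))\bigl(\|p_n'-p'\|_{L^\infty(K)}+\varepsilon_n\bigr)\to0 .
\]
Thus the whole sequence converges, and $w(t)=\mathcal S_tw_0$, so $w$ inherits the semigroup, Lipschitz and consistency properties of Definition~\ref{semigroup}.

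\textbf{Main obstacle.} I expect the main difficulty to be the uniformity in $n$ in Steps 1 and 3. The interaction constants and the Glimm-type functional must not degenerate as the kinks of $p_n$ accumulate. I would check this first by verifying that the third-order estimates depend only on bounds for $p_n'$ and on the explicit symmetric shock/rarefaction curves, not on $p_n''$. Likewise, the Riemann-solver error must be $O(\|p_n'-p'\|_\infty)$ times the strength, uniformly in $n$.
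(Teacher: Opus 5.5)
Your proof is correct, but Step 3 takes a different route from the paper's.

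\textbf{Steps 1--2.} These are essentially the paper's argument: the uniform BV bound from Section~\ref{uni-bv}, Lipschitz continuity in time, Helly's theorem, and a vanishing consistency error. The only difference is bookkeeping. You replace $p_n$ by $p$ directly in the weak formulation, using $\|p_n-p\|_{L^\infty(K)}\to0$, while the paper sums the residuals \eqref{weak-sol} front by front. The two amount to the same thing.

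\textbf{Step 3, the paper's route.} To identify the limit with the semigroup, the paper invokes existence of the SRS for \eqref{p-sys} and then the uniqueness result \cite[Theorem 9.4]{bressan2000hyperbolic}, concluding $w(t)=S_tw_0$. This is short. However, it tacitly requires the limit $w$ to satisfy that theorem's admissibility hypotheses, which the paper only sketches via \eqref{e-sol}.

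\textbf{Step 3, your route.} You apply the error formula of Lemma~\ref{bressan-lemma}, with the single-front bounds of Lemma~\ref{local-con-est}, directly to $w^n$ against the SRS of the smooth flux $F$. This is exactly the argument the paper reserves for Theorem~\ref{main-theorem}, so you are effectively proving both theorems at once. What this buys:
\begin{itemize}
\item no uniqueness theorem is needed, and no semigroup for $F_n$ is needed either (so your remark that $\nabla g$ exists only a.e.\ is unnecessary);
\item the whole sequence converges, with a rate, rather than only a subsequence;
\item the weak and entropy verification in Step 2 becomes redundant, since SRS trajectories are automatically entropy weak solutions.
\end{itemize}
The price is that you rely on the quantitative per-front estimates.

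\textbf{Two points to make explicit.} First, rarefaction and contact fronts carry the extra term $C|w_+-w_-|^2$ in Lemma~\ref{local-con-est}. This is harmless because each such $F_n$-front lies in a single grid cell, so its strength is $O(2^{-n})$ and the total contribution is $O(2^{-n})\,\TV(w_0)$. Second, passing from $S_tw^n(0)$ to $S_tw_0$ costs $L\|w^n(0)-w_0\|_{L^1}$, which vanishes by the choice of the initial approximations.
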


\begin{theorem}[Stability]\label{main-theorem}
Let  $S_{t}$ be the semigroup generated by the system \eqref{p-sys} corresponding to $w_{0}$.  For fix $n\in\mathbb{N}$, let $w^n(x,t)=(v^n(x,t), u^n(x,t))$ be the weak solution to \eqref{appr-p} corresponding to $w_{0}$. Then, there exists a constant $C$ such that  \begin{eqnarray}\label{df-df_n}
||S_{t}w_{0}-w^n(x,t)||_{L^{1}}\le C\cdot||DF-DF_{n}||_{L^{\f}} TV(w_{0})\,t.
\end{eqnarray}
Consequently,  
\begin{eqnarray}\label{df-dfn}
||S_{t}w_{0}-w^n(x,t)||_{L^{1}}\le \mathcal{O}\left(\frac{1}{2^{n}}\right)t.
\end{eqnarray}
\end{theorem}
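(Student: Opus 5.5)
The plan is to use the Bianchini--Colombo error formula for Lipschitz semigroups applied to the front tracking approximations of \eqref{appr-p}. Let $w^{n,\nu}$ be the approximate wavefront tracking solution to \eqref{appr-p} given by Theorem \ref{exi-app-p}, with accuracy parameter $\nu$, so that $w^{n,\nu}(t)\to w^n(t)$ in $L^1$ as $\nu\to\infty$. Since $S_t$ is a standard Riemann semigroup for \eqref{p-sys} (Theorem \ref{weak-p-syst}) with Lipschitz constant $L_1$ in Definition \ref{semigroup}, we use the standard estimate
\begin{align*}
\|S_t w_0 - w^{n,\nu}(t)\|_{L^1}\le L_1\int_0^t \liminf_{h\to0+}\frac{\|S_h w^{n,\nu}(s)-w^{n,\nu}(s+h)\|_{L^1}}{h}\,\dy s .
\end{align*}
This reduces everything to a local error at each time $s$ between two consecutive interaction times. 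At such a time, $w^{n,\nu}(s)$ is piecewise constant. Hence $S_h w^{n,\nu}(s)$ is obtained, for small $h$, by gluing exact Riemann solutions of \eqref{p-sys} at each jump. Meanwhile $w^{n,\nu}(s+h)$ consists of the fronts of the $p_n$-Riemann solutions, plus nonphysical fronts.

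The core step is a Riemann-problem comparison. For a jump $(w^-,w^+)$ of strength $\sigma$, we must show that the $L^1$ distance at time $h$ between the exact $p$-solution and the $p_n$-solution is at most $C h\,\|DF-DF_n\|_{L^\infty}|\sigma|$. For the $p$-system the wave speeds are $\pm\sqrt{-p'(v)}$ (resp. $\pm\sqrt{-\Delta p/\Delta v}$ for shocks), and the wave curves are $u^+-u^- = \pm\int\sqrt{-p'}$ or $\pm\sqrt{-(p(v^+)-p(v^-))(v^+-v^-)}$. Therefore both the intermediate states and the speeds depend on $p$ only through $p'$ and difference quotients of $p$ on the interval $[v^-,v^+]$. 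Their differences are thus bounded by $C\|p'-p_n'\|_{L^\infty}$ times the jump size, using that $v$ stays in a compact set bounded away from zero, so that $\sqrt{\cdot}$ is Lipschitz there. A rarefaction fan for $p$ and its staircase for $p_n$ differ in $L^1$ by $O(h|\sigma|\,\|p'-p_n'\|_\infty)$, since the characteristic speed mismatch at each $v$ is $O(\|p'-p_n'\|_\infty)$. The explicit wave-curve formulas from Section \ref{sec-scheme} give this directly, avoiding the implicit function theorem.

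Nonphysical fronts contribute at most $C\sum|\sigma_{NP}|$, which is $o(1)$ as $\nu\to\infty$ by the generation-weighted functional estimate. The approximation errors in rarefactions for $p_n$ (if any splitting beyond the grid is used) also vanish with $\nu$. Summing over fronts and using the uniform BV bound of Section \ref{uni-bv}, $TV(w^{n,\nu}(s))\le C\,TV(w_0)$, we get the integrand bounded by $C\|DF-DF_n\|_\infty TV(w_0)+o_\nu(1)$. Integrating in $s$ and letting $\nu\to\infty$ gives \eqref{df-df_n}.

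For \eqref{df-dfn}, we take $p_n$ as the interpolant of $p$ on a dyadic grid of mesh $2^{-n}$ in the compact $v$-range. Then $\|p'-p_n'\|_\infty\le \|p''\|_\infty 2^{-n}$ there, since $p'$ is Lipschitz away from $v=0$.

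The main obstacle is the Riemann comparison step. It requires uniform-in-$n$ control of the $p_n$ intermediate states, together with the matching of shock positions with possibly different shock/rarefaction types for $p$ versus $p_n$ near sonic points. I would handle this by writing both solutions as functions of $v$ along the explicit curves and estimating the speed difference pointwise.
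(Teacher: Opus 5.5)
Your proposal is correct. It shares the paper's global skeleton: Bressan's error formula with $S_t$ the SRS of the smooth system, the uniform BV bound, smallness of nonphysical fronts, and then the limit in the front-tracking parameter. Where you differ is the single-front estimate.

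\textbf{The paper's route.} The paper never solves a Riemann problem for $p$. Each physical front of the approximation satisfies the Rankine--Hugoniot relation for $F_n$ exactly, so its residual with respect to $F$ is $(F-F_n)(w_-)-(F-F_n)(w_+)$. The mean value theorem bounds this by $\|DF-DF_n\|_{L^\infty}|w_+-w_-|$, and Lemma~\ref{local-con-est} converts the residual into an $L^1$ error rate. That takes two lines and uses nothing specific to the $p$-system. However, for rarefaction fronts Lemma~\ref{local-con-est} carries an extra term $C|w_+-w_-|^2$, which the paper's summation drops without comment. It has to be absorbed using the fact that every $p_n$-rarefaction front lies in one grid cell, so $|w_+-w_-|\le C2^{-n}\le C'\|DF-DF_n\|_{L^\infty}$.

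\textbf{Your route.} Your explicit comparison handles the rarefaction case automatically, since on a single cell $|\sqrt{-p'(v)}-\Lambda_i^n|\le C\|p'-p_n'\|_{L^\infty}$. The price is two extra verifications. First, you must prove the intersection of the Lax curves is stable under an $O(\|p'-p_n'\|_{L^\infty}|\sigma|)$ perturbation. Monotone curves with slopes of opposite sign bounded away from zero suffice; no implicit function theorem is needed. Second, you must note that the spurious opposite-family $p$-wave has strength $O(\|p'-p_n'\|_{L^\infty}|\sigma|)$, so its type is irrelevant. Your worry about sonic points is moot because $p''>0$. Also, at a non-interaction time every jump is a single $p_n$-front, so only single-front comparisons are needed, not general Riemann data.

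\textbf{Two small corrections.} The error formula should also carry the term $L\|w_0-w^{n,\nu}(0)\|_{L^1}$, which vanishes as $\nu\to\infty$. For \eqref{df-dfn} you cannot choose a dyadic grid, since $p_n$ is fixed by $\mathcal{V}$. However, the paper's cell widths on the compact $v$-range are bounded by $C2^{-n}$, so $\|p'-p_n'\|_{L^\infty}\le C\|p''\|_{L^\infty}2^{-n}$ still holds.
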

% As a direct consequence of estimate \eqref{df-df_n} in Theorem \ref{main-theorem}, we obtain the following corollary.
% \begin{corollary}\label{corollary}
% Let $w(x,t)$ be a weak solution to  \eqref{p-sys}  and  $U^{n}(x,t)$ an approximate weak solution to \eqref{appr-p} obtained from our approximation scheme corresponding to same initial data  $w_{0}=(v_{0},u_{0})\in\mathcal{D}$. Then 
% \begin{eqnarray}
% ||w(x,t)-U^{n}(x,t)||_{L^{1}}\le\mathcal{O}\left(\frac{1}{2^{n}}\right)(1+t).
% \end{eqnarray}
% \end{corollary}
% Let us make the following remark about the discontinuities of the approximate solutions.
% \begin{remark}
% One can note that from our construction scheme the approximated weak solution $U^{n}(x,t)$ does not have the contact discontinuities, however, $S^{n}_{t}$ may have contact  discontinuities due to the piecewise linear approximation of the flux.  
% \end{remark}
Next we introduce the front tracking approximation scheme and  postpone the proof of the main results to Section~\ref{main-estimates}.

\section{Wave front  tracking scheme}\label{sec-scheme}
In this section, we present the front tracking approximation scheme to construct an approximate front tracking weak solution to \eqref{p-sys}. Let us start with defining approximation of pressure $p_{n}(v)$ in \eqref{appr-p}:
\begin{equation}\label{pn}
     p_{n}(v):=\frac{v-v_{i}^{n}}{v_{i+1}^{n}-v_{i}^{n}}p\left(v_{i+1}^{n}\right)+\frac{v_{i+1}^{n}-v}{v_{i+1}^{n}-v_{i}^{n}}p\left(v_{i}^{n}\right) \quad \mbox{ for } v\in\left[v_{i}^{n}, v_{i+1}^{n}\right].
\end{equation}
where $v_{i}^{n}\in\mathcal{V}=\{v_{i}^{n}: (v_{i+1}^{n}-v_{i}^{n})(p(v_{i}^{n})-p(v_{i+1}^{n}))=(v_{1}^{n}-v_{0}^{n})(p(v_{0}^{n})-p(v_{1}^{n}))\}$, for all $i\in\mathbb{Z}$, and we choose $v_{0}^{n}=1$ and $v_{1}^{n}=1+\frac{1}{2^{n}}$. %Choosing $v_{i}^{n}$ for $i>1$ and $i<0$, % we use the rarefaction curve corresponding to the flux $(u, p_{n}(v))$. Our choice of $v_{i}^{n}$ is such that whenever two rarefaction front interact the outgoing Riemann problem can be solved exactly for approximated system. More precisely, we need
% such that it satisfies the following relation
% \begin{eqnarray*}
% \int\limits_{v_{i-1}^{n}}^{v_{i}^{n}}\sqrt{-p_{n}'(y)}\dy y=\int\limits_{v_{i}^{n}}^{v_{i+1}^{n}}\sqrt{-p_{n}'(y)}\dy y,
% \end{eqnarray*}
% which can be rewritten as follows,
% \begin{eqnarray}\label{choice-equ}
% (v_{i}^{n}-v_{i-1}^{n})(p(v_{i-1}^{n})-p(v_{i}^{n}))=(v_{i+1}^{n}-v_{i}^{n})(p(v_{i}^{n})-p(v_{i+1}^{n})).
% \end{eqnarray}
%Let us make a very useful observation about relation \eqref{choice-equ},
% \begin{remark}
% One can note that due to our choice if $v_{0}^{n}$ and $v_{1}^{n}$ we can write,
% \begin{align*}
%   (v_{i}^{n}-v_{i-1}^{n})(p(v_{i-1}^{n})-p(v_{i}^{n}))&=\frac{(2^{n}+1)^{\beta}-2^{n\beta}}{(2^{n}+1)^{\beta}2^{n}}\\
%   \frac{(x_{i}-1)(x_{i}^{\beta}-1)}{x_{i}^\beta}&=\frac{(2^{n}+1)^{\beta}-2^{n\beta}}{(2^{n}+1)^{\beta}2^{n}}v_{i-1}^{\beta-1}
% \end{align*}
% where, $x_{i}:\frac{v_{i}}{v_{i-1}}$. By the strictly increasing property of the function on right hand side gives that $x_{i}\le x_{i+1}$
% \end{remark}
% From equation \eqref{choice-equ}, we can note,
%  \begin{eqnarray}\label{v_i-estimate} (v_{1}^{n}-v_{0}^{n})(p(v_{0}^{n})-p(v_{1}^{n}))=(v_{i+1}^{n}-v_{i}^{n})(p(v_{i}^{n})-p(v_{i+1}^{n})), \quad\mbox{ for any }i>1 \mbox{ or }i<0,
%  \end{eqnarray}
% by using the \eqref{v_i-estimate}, we can define the set $\mathcal{V}$ as
% \begin{equation*}
%     \mathcal{V}=\{v_{i}^{n}: (v_{1}^{n}-v_{0}^{n})(p(v_{0}^{n})-p(v_{1}^{n}))=(v_{i+1}^{n}-v_{i}^{n})(p(v_{i}^{n})-p(v_{i+1}^{n})), \quad\mbox{ for any }i>1 \mbox{ or }i<0\}.
% \end{equation*}
If we choose the initial data in some compact set does not contain 0, then for any $v_{i}^{n}$ and $v_{i+1}^{n}$ is in this compact set satisfies,
\begin{align*}
    \frac{\Tilde{C}}{2^{n}}=\Tilde{C}|v_{1}^{n}-v_{0}^{n}|\le|v_{i+1}^{n}-v_{i}^{n}|\le C|v_{1}^{n}-v_{0}^{n}|=\frac{C}{2^{n}}
\end{align*}
where constant $C$ and $\Tilde{C}$ are depends only on the compact set. %Let us give a notation $a_{i}=\frac{p(v_{0}^{n})-p(v_{1}^{n})}{p(v_{i}^{n})-p(v_{i+1}^{n})}>1$

\begin{center}
%vertical lines
    \begin{tikzpicture}
    \foreach \x in {-4, 0,2,4.5,9}
  \draw[thick] (\x,-.5) -- (\x,3.5);
    % \draw (-4,3.5)--(-4,-2);
    % \draw (0,3.5)--(0,-2);
    % \draw (2,3.5)--(2,-2);
    % \draw (4.5,3.5)--(4.5,-2);
    % \draw (9,3.5)--(9,-2);
    \draw[very thick] (6.5,0) node{\Huge $\cdots$};
    \draw[very thick] (-2,0) node{\Huge $\cdots$};
    
    %rarefaction curves
    \draw[red,thick] (0,1.5)--(2,0);
    \draw[red,thick] (0,1.5)--(2,3);
    \draw[blue,thick] (2,0)--(4.5,1.5);
    \draw[blue,thick] (2,3)--(4.5,1.5);
   % \draw[very thick] (4.5,0) node{$\cdots$};
    % \draw[very thick] (5.2,0) node{$\cdots$};
    
    %bullets
    \draw (0,1.5) node{\tiny $\bullet$};
    \draw (2,0) node{\tiny $\bullet$};
    \draw (2,3) node{\tiny $\bullet$};
    \draw (4.5,1.5) node{\tiny $\bullet$};

    %names&nodes
     \draw (0,-.75) node{\footnotesize $v_{i-1}^{n}$};
     \draw (2,-.75) node{\footnotesize $v_{i}^{n}$};
    \draw (4.5,-.75) node{\footnotesize $v_{i+1}^{n}$};
    
    %length lines
    % \draw [<->] (0,3.6)--(2,3.6);
    % \draw (1,3.9) node{\footnotesize $1/2^{n}$};
    % \draw [<->] (2,3.6)--(4.5,3.6);
    % \draw (3,3.9) node{\footnotesize $a_{1}/2^{n}$};
    \end{tikzpicture}
    \captionof{figure}{Illustration of Lax curves in $vu$-plane} \label{fig1}
\end{center}
\subsection{Shock and rarefaction curve for system \eqref{p-sys} and \eqref{appr-p}}
We consider the polygonal approximation of $p$-system \eqref{p-sys},
 \begin{eqnarray*}%\label{appr-p-2}
	 \left\{\begin{array}{rlll}
	   v_{t}-u_{x}&=&0,  &x\in\R, t>0,\\
	    u_{t}+p_{n}(v)_{x}&=&0, &x\in\R, t>0,\\
	    (v(x,0), u(x,0))&=&(\bar{v}_{0}^{n}(x), \bar{u}_{0}^{n}(x)), &x\in\R,
	 \end{array}\right.
\end{eqnarray*}
where $p_{n}(v)$ is a piecewise linear approximation of pressure term $p(v)$ as defined in \eqref{pn}, and $(\bar{v}_{0}^{n},\bar{u}_{0}^{n})$ is the piecewise constant approximation of initial data. %$(v_{0}(x), u_{0}(x))$ such that $v_{0}^{n}\in\left\{\frac{i}{2^{n}}:\ i\in\mathbb{N}\right\}$ for any fix $n\in\mathbb{N}$.
%We wish to have a well-defined approximating scheme to obtain an approximate entropy solution for \eqref{p-sys}. In order to get the approximating scheme for \eqref{p-sys} we use the Lax-curves of the system \eqref{appr-p}. First, we get the eigenvalues and the corresponding eigenvectors.
 Let $F_{n}(v,u):=(-u, p_{n}(v))$, and $DF_{n}(v,u)$  be the Jacobian matrix of $F_{n}(v,u)$ for $(v,u)\in\left(v_{i}^{n}, v_{i+1}^{n}\right)\times\R$. Moreover we define $DF_{n}\left(v_{i}^{n},u\right):=\lim_{v\rr v_{i}^{n}+}DF_{n}(v,u)$. Hence, system \eqref{appr-p} has two distinct eigenvalues
 \begin{align}\label{eig-val}
 \left\{\begin{array}{llll}
    \lambda_{1}(v,u)=&-\sqrt{\frac{p\left(v_{i}^{n}\right)-p\left(v_{i+1}^{n}\right)}{v_{i}^{n}-v_{i+1}^{n}}},  & (v,u)\in\left[v_{i}^{n}, v_{i+1}^{n}\right)\times\R, \\
     \lambda_{2}(v,u)=&\sqrt{\frac{p\left(v_{i}^{n}\right)-p\left(v_{i+1}^{n}\right)}{v_{i}^{n}-v_{i+1}^{n}}}, & (v,u)\in\left[v_{i}^{n}, v_{i+1}^{n}\right)\times\R,
 \end{array}\right.
 \end{align}
 we define $\Lambda_{i}^{n}:=\sqrt{\frac{p\left(v_{i}^{n}\right)-p\left(v_{i+1}^{n}\right)}{v_{i}^{n}-v_{i+1}^{n}}}$, so in future we use the notation  $\Lambda_{i}^n$ instead of writing the whole expression. The eigenvectors $r_{1}$ and $r_{2}$ corresponding to eigenvalues $\lambda_1(v,u)$ and $\lambda_2(v,u)$, respectively, are as follows,
 \begin{eqnarray}\label{eig-vec}
 \left\{\begin{array}{lllll}
    r_{1}(v,u)=&\left(1, \Lambda_{i}^{n}\right),  & (v,u)\in\left[v_{i}^{n}, v_{i+1}^{n}\right)\times\R, \\
     r_{2}(v,u)=&\left(1, -\Lambda_{i}^{n}\right),  & (v,u)\in\left[v_{i}^{n}, v_{i+1}^{n}\right)\times\R.
 \end{array}\right.
 \end{eqnarray}
%Therefore, from \eqref{eig-val} and \eqref{eig-vec} it follows that system \eqref{appr-p} is strictly hyperbolic. 
By using the Rankine-Hugoniot condition we can get the shock-curves $S_{1}^{n}$ and $S_{2}^{n}$,
\begin{equation}\label{S1}
    S_{1}^{n}: u=u_{l}-\sqrt{(v-v_{l})(p_{n}(v_{l})-p_{n}(v))}, \mbox{ for } v_{l}>v,
\end{equation}
\begin{equation}\label{s2}
    S_{2}^{n}: u=u_{l}-\sqrt{(v-v_{l})(p_{n}(v_{l})-p_{n}(v))}, \mbox{ for } v_{l}<v.
\end{equation}
Furthermore, the shock-curves $S_{1}^{n}$ and $S_{2}^{n}$ are Lipschitz curves in the $vu$-plane. Using the self-similar property, we obtain the rarefaction curves $R_{1}^{n}$ and $R_{2}^{n}$,
\begin{equation}\label{R1}
    R_{1}^{n}: u=u_{l}+\int\limits_{v_{l}}^{v}\sqrt{-p'_{n}(y)}\dy y \quad\mbox{ for } v>v_{l}.
\end{equation}
\begin{equation}\label{R2}
    R_{2}^{n}: u=u_{l}-\int\limits_{v_{l}}^{v}\sqrt{-p'_{n}(y)}\dy y \quad\mbox{ for } v<v_{l},
\end{equation}
From \eqref{R1}-\eqref{R2} we conclude that rarefaction curves $R_{1}^{n}$ and $R_{2}^{n}$ are also Lipschitz curves in $vu$-plane and consist of piecewise linear segments (see Fig \ref{fig1}). Moreover, the shock curves and rarefaction curves for system \eqref{p-sys} are given as follows
\begin{equation}\label{s1-p}
    S_{1}: u=u_{l}-\sqrt{(v-v_{l})(p(v_{l})-p(v))}, \mbox{ for } v_{l}>v,
\end{equation}
\begin{equation}\label{s2-p}
    S_{2}: u=u_{l}-\sqrt{(v-v_{l})(p(v_{l})-p(v))}, \mbox{ for } v_{l}<v,
\end{equation}
\begin{equation}\label{R1-p}
    R_{1}: u=u_{l}+\int\limits_{v_{l}}^{v}\sqrt{-p'(y)}\dy y \quad\mbox{ for } v>v_{l},
\end{equation}
\begin{equation}\label{R2-p}
    R_{2}: u=u_{l}-\int\limits_{v_{l}}^{v}\sqrt{-p'(y)}\dy y \quad\mbox{ for } v<v_{l}.
\end{equation}
\begin{remark}
A straightforward calculation shows that, in the $vu$-plane, curves $S_{i}^{n}$ and $R_{i}^{n}$ converges uniformly to $S_{i}$ and $R_{i}$, respectively, as $n$ goes to infinity. The curves $S_{i}^{n}$ coincide with $S_{i}$  and $R_{i}^{n}$ lies between $S_{i}$ and $R_{i}$ for $i=1, 2$, when $v, v_{l}\in\mathcal{V}$. 
\end{remark}
\subsection{Riemann solver}\label{riemann-solver}
We are now ready to introduce the Riemann solver to solve the Riemann problem obtained from the piecewise constant approximation of intial data.  Suppose there are $\{x_{k}\}_{k=1}^{N}$ discontinuity points at $\{t=0\}$ and for any fixed $m\in\mathbb{N}$, we choose piecewise constant approximation of initial data such that, 
\begin{eqnarray}\label{v-approx}
 \bar{v}_{0}^{m}\in\mathcal{V}, |v_{0}(x)-\bar{v}_{0}^{m}|\le\frac{1}{2^{m}} \quad \mbox{ and }\quad  \bar{u}_{0}^{n}\in\mathcal{U}=\left\{\frac{j}{2^{m}}: j\in\mathbb{Z}\right\}, |u_{0}(x)-\bar{u}_{0}^{m}|\le\frac{1}{2^{m}},
\end{eqnarray}
and $TV(v_{0}(x),u_{0}(x))\le TV(\bar{v}_{0}^{m}, \bar{u}_{0}^{m})$.
The existence of such an approximation is guaranteed by  \cite[Chapter 6]{bressan2000hyperbolic}. This approximation of initial data generates a collection of Riemann problems at the discontinuity points $\{x_{k}\}_{k=1}^{N}$.

Assume that at some discontinuity point $x_{k}$ on $\{t=0\}$, we have Riemann problem with the left and right states as $(v_{l}, u_{l})\in\mathcal{V}\times\mathcal{U}$ and $(v_{r}, u_{r})\in\mathcal{V}\times\mathcal{U}$, respectively. Using the shock and rarefaction curves, $S_{i}^{n}$ and $R_{i}^{n}$, defined in \eqref{S1}-\eqref{R2}, we solve the Riemann problem by connecting $(v_{l}, u_{l})$ to intermediate state $(v_{m}, u_{m})$ through 1-family curve and $(v_{m}, u_{m})$ connected to  $(v_{r}, u_{r})$ by a 2-family curve. However, the intermediate state $(v_{m}, u_{m})$ may not stay in $\mathcal{V}\times\mathcal{U}$. If the solution of Riemann problem consist a rarefaction wave then due to piecewise affine structure of rarefaction curve $R_{1}^{n}$ and $R_{2}^n$, it will divide into a rarefaction fan and each front in this fan propogate with speed $p'_{n}(v_{l})=p_{n}'(v_{i}^{n})$, $v_{l}\in\left[v_{i}^{n}, v_{i+1}^{n}\right)$ where $v_l$ is the left state $v$-component for corresponding front. The piecewise affine structure gives the piecewise constant solution to each Riemann problem. Additionally, one can verify that obtained piecewise constant solution satisfies the Rankine-Hugoniot condition for system \eqref{appr-p}, thus it is exact weak solution for \eqref{appr-p}. Furthermore, because of the affine structure of flux we can explicitly calculate the intermadiate state in terms of the grid points. Thus after each interaction we still have piecewise constant solution and we can continue this process unless at some finite time number of fronts become infinite. Therefore, we need to make sure that in our scheme the number fronts remains finite at all finite time. In next subsection, we discuss how we deal with this difficulty.

\subsection{Fronts counting analysis}
In this subsection, we point out exactly which type of fronts interaction can actually accumulate at some finite time $T$. In \ref{inf-front-accu}, we prove that for any given $\delta>0$ there exists an initial data $(v_0, u_0)\in BV\times BV$ such that $TV(v_0,u_0)\le\delta$, but the number of fronts still become infinite at some time $T<\f$. Moreover, this explicit construction of the example showed that the interaction pattern of this construction consist of infinitely many interaction between same family shock and rarefaction fronts. However, this is merely not a coincidence, we also prove that this is a necessary and sufficient criterion for the accumulation of interaction at some finite time, that is, any neighborhood of an accumulation point must contain infinitely many interactions between the same family shock and rarefaction fronts. In the next proposition, we will prove this criterion.

\begin{proposition}[Optimal interaction class for finite front]\label{finite-front}
Assume that, at time $t=0$, the front-tracking approximation starts from finitely many fronts. Further, assume that no same-family shock--rarefaction interaction occurs, that is, no interaction of the form
\[
S_i+R_i,\qquad i=1,2,
\]
takes place, where $S_i$ denotes a shock front of the $i$-th characteristic family and $R_i$ denotes a rarefaction front of the same family. Then the total number of interactions are finite. Consequently, the total number of fronts remains finite.
\end{proposition}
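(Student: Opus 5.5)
We will prove Proposition~\ref{finite-front} by constructing a lexicographically ordered counting functional that strictly decreases at every admissible interaction and takes values in a well-ordered set. The natural starting point is the classical argument of Bressan for front tracking. In that setting finiteness follows because each interaction either decreases the number of fronts or creates new fronts only of a family different from the incoming ones. The new fronts are counted with smaller weights through the number of approaching pairs.

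\textbf{Step 1: classify the interactions.} Excluding $S_i+R_i$, the possible interactions are of three kinds:
\begin{enumerate}[(a)]
\item same-family $R_i+R_i$ and $S_i+S_i$;
\item cross-family $S_1+S_2$, $S_1+R_2$, $R_1+S_2$, $R_1+R_2$;
\item interactions with nonphysical fronts (none occur if the nonphysical fronts are not yet introduced).
\end{enumerate}

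For each kind, we will use the explicit, piecewise affine shape of the Lax curves \eqref{S1}--\eqref{R2} to determine the outgoing pattern.
\begin{itemize}
\item For $S_i+S_i$ the outgoing $i$-wave is a single shock, since the $S_i^n$ curves are explicit and the merged shock is admissible. At most one front of the other family is produced.
\item For $R_i+R_i$ the grid $\mathcal{V}$ was chosen so that consecutive rarefaction steps have equal ``Riemann invariant length'' $(v_{i+1}^n-v_i^n)(p(v_i^n)-p(v_{i+1}^n))$. Two adjacent rarefaction fronts of the same family cannot approach, since they have ordered speeds. The genuine same-family approaching case therefore reduces to shocks.
\item For cross-family interactions of the $p$-system, the symmetric structure ($R_1$ and $R_2$ are reflections in $u$, and $S_1^n$, $S_2^n$ likewise) shows that the waves pass through each other. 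The outgoing fans have the same $v$-grid endpoints, so the number of outgoing fronts of each family equals the incoming number. An outgoing rarefaction fan consists of the same number of elementary steps as the incoming one, because steps are grid steps.
\end{itemize}

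\textbf{Step 2: define the functional.} Let $N(t)$ be the number of fronts and $A(t)$ the number of approaching pairs of fronts. We will show that:
\begin{itemize}
\item in case (a) $N$ does not increase, with strict decrease for $S_i+S_i$ unless one reflected front is created, in which case $A$ must be examined;
\item in case (b) $N$ is unchanged while $A$ strictly decreases, because the two incoming fronts cross and become a diverging pair, and every other front sees the outgoing fronts in the same relative approach configuration as before (speeds are determined by the same families and the grid).
\end{itemize}
The functional $\Phi=(N,A)$ with lexicographic order, or a combination $\Phi=N\cdot M+A$ with $M$ large, then decreases by at least one at each interaction. It is here that excluding $S_i+R_i$ is essential. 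That interaction partially cancels and may create a new front of the opposite family while leaving a shorter shock and rarefaction of family $i$, and repeated such events form the example of \ref{inf-front-accu}.

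\textbf{Main obstacle.} The main obstacle is the accounting for $S_i+S_i$ when a reflected wave of the other family is produced, since $N$ stays the same while $A$ might increase. We will try the standard generation/weight trick: count fronts with weight $W=\sum_\alpha 2^{-k_\alpha}$ indexed by family, or use the bound that the number of $S_i+S_i$ interactions is at most the initial number of $i$-shocks. Each $S_i+S_i$ merges two $i$-shocks into one, and in the absence of $S_i+R_i$ no new $i$-shocks of family $i$ are ever created except by reflection in $S_j+S_j$, $j\neq i$, which by the same counting happens finitely often. An induction on the total number of shocks therefore bounds the $S+S$ events. Between them only cross-family events occur, and these decrease $A$, so they are finite. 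Consequently the number of fronts remains finite.
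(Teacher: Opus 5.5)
\textbf{The gap is in case (b).} You assert that in cross-family interactions ``the outgoing fans have the same $v$-grid endpoints'' and that every other front ``sees the outgoing fronts in the same relative approach configuration.'' Both claims are false for the piecewise affine system. When a rarefaction front crosses a front of the other family, its $v$-support is translated. For example, in $S_2+R_1$ the outgoing $1$-wave connects $(v_l,u_l)$ to $(v_m^*,u_m^*)$ rather than $(v_m,u_m)$ to $(v_r,u_r)$. The translated support can therefore straddle a point of $\mathcal V$, and then the wave splits into two rarefaction fronts with different speeds.

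The paper's own example exhibits exactly this. In $\delta_j+\gamma_2^j\to\gamma_1^{j+1}+\gamma_2^{j+1}+\delta_j'$, a single $1$-rarefaction front crossing a $2$-shock becomes two $1$-rarefaction fronts, because $\xi_{j+1}<v_*<\zeta_{j+1}$. The same splitting can occur in $R_2+R_1$.

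Consequently, in case (b) neither $N$ nor $A$ is monotone. $N$ goes up by one, and every front that was approaching the old rarefaction front may now approach both branches, so $A$ can grow. Neither the lexicographic pair $(N,A)$ nor your fallback closes the argument. The fallback bounds the $S+S$ events by the shock count and then asserts that the cross-family events in between ``decrease $A$''. Nothing you wrote rules out infinitely many cross-family crossings accompanied by repeated branching.

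\textbf{What is missing} is a weight on rarefaction fronts that strictly drops under branching. The paper orders $X=(M_S,A_D,\dots,A_0)$ lexicographically. Here $M_S$ is the number of shocks and $A_d$ counts approaching pairs of degree $d$. The degree is built from
\[
L(\beta)=(K_{\max}+1)\nu(\beta)+K(\beta).
\]
\begin{itemize}
\item $\nu(\beta)$ counts the opposite-family shock labels not yet crossed by the lineage of $\beta$. It drops by at least one for every branch produced by a shock crossing. This works because $1$-fronts always move left and $2$-fronts always move right, so a given shock meets a given rarefaction lineage at most once. The factor $K_{\max}+1$ absorbs any increase of $K$ in such a crossing.
\item $K(\beta)$ records the grid cell. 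It does not increase in an $R_2+R_1$ crossing, and at most one branch keeps the same value.
\end{itemize}
As a result, every pair involving a branch either continues an old pair one-to-one at the same degree or has strictly smaller degree. The interacting pair itself disappears without replacement, so the highest changed coordinate of $X$ strictly decreases.

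\textbf{A smaller point on shocks.} Your instinct to put the shock count first is right. However, you must use the fact that the wave reflected in $S_i+S_i$ is a rarefaction. Otherwise the total number of shocks need not decrease, and your ``by the same counting'' induction is circular. Note also that this reflected rarefaction may consist of several fronts, not ``at most one.''
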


\begin{proposition}[Criterion for accumulation of interactions]\label{necessary-condition}
Let $z^*=(x^*,T^*)$ be accumulation point of interaction points for the front-tracking approximation. Then every neighbourhood of $z^*$ contains infinitely many same-family shock--rarefaction interactions.
\end{proposition}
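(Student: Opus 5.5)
Proposition \ref{necessary-condition} follows from Proposition \ref{finite-front} by a localization argument. I would argue by contradiction. Suppose $z^*=(x^*,T^*)$ is an accumulation point of interaction points, but some neighbourhood $B$ of $z^*$ contains only finitely many same-family shock--rarefaction interactions $S_i+R_i$. Shrinking $B$, I may take it to be a trapezoid (backward characteristic cone)
\[
\Delta_\varepsilon=\{(x,t):\ T^*-\varepsilon\le t\le T^*,\ |x-x^*|\le r+\hat\lambda (T^*-t)\},
\]
with $\varepsilon,r>0$ small and $\hat\lambda$ strictly larger than every front speed, i.e.\ larger than $\sup_i\Lambda_i^n$ over the compact range of $v$, which is finite. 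Choosing $\varepsilon$ smaller still, $\Delta_\varepsilon$ contains no $S_i+R_i$ interactions at all, and we may assume no interaction lies exactly on its lower edge $t_0:=T^*-\varepsilon$.

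The first key step is a domain-of-dependence statement. Because the lateral boundaries of $\Delta_\varepsilon$ move outward faster than any front, no front can enter $\Delta_\varepsilon$ through its sides. So every front inside $\Delta_\varepsilon$ either crosses the base segment $\{t_0\}\times[x^*-r-\hat\lambda\varepsilon,\,x^*+r+\hat\lambda\varepsilon]$ or is generated at an interaction inside $\Delta_\varepsilon$. Nonphysical fronts, which have zero speed, are covered by the same argument. At time $t_0<T^*$ the approximation is still well defined, so only finitely many fronts cross the base.

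The second step applies the counting argument behind Proposition \ref{finite-front}, restricted to $\Delta_\varepsilon$. That proposition assumes finitely many initial fronts and no $S_i+R_i$ interactions, and concludes that there are finitely many interactions. I would check that its proof uses only a counting functional built from the fronts present and how it changes at interactions, together with the local Riemann solver. It should therefore transfer to the restricted evolution in $\Delta_\varepsilon$: treat time $t_0$ as the initial time, start from the finitely many fronts on the base, and note that by the first step nothing enters from outside. The conclusion is that only finitely many interactions occur in $\Delta_\varepsilon$. This contradicts the assumption that $z^*$ is an accumulation point of interaction points, since $\Delta_\varepsilon$ is a neighbourhood of $z^*$ relative to $\{t\le T^*\}$, and interactions cannot accumulate at $z^*$ from times $t>T^*$ before the construction reaches $T^*$. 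Hence every neighbourhood of $z^*$ contains infinitely many $S_i+R_i$ interactions.

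The main obstacle is the localization in the second step. The functional in Proposition \ref{finite-front} may be global, for example one counting approaching pairs of fronts across the whole line. I would need to verify that it stays monotone when it only sees the fronts inside the cone. Fronts leaving through the lateral boundary can only reduce the count, so I expect this to hold. I would also need to confirm that the special handling of nonphysical fronts and the threshold-based choice between the accurate and simplified solvers are local, so they do not reintroduce unboundedly many new fronts inside $\Delta_\varepsilon$.
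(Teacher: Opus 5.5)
Your argument is correct and is essentially the paper's proof. Both argue by contradiction: take a neighbourhood with only finitely many $S_i+R_i$ interactions, shrink it to a backward cone that avoids them and whose slope bounds every front speed, observe that only finitely many fronts cross its base, and apply Proposition~\ref{finite-front} to obtain finitely many interactions, contradicting accumulation. Your extra step of evaluating the counting vector $X$ only on fronts inside the cone, which can only decrease when fronts exit laterally, is a genuine refinement. The paper invokes Proposition~\ref{finite-front} directly, although that proposition's hypothesis (no $S_i+R_i$ interactions anywhere) is global rather than local to the cone.
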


\begin{remark}
Propositions~\ref{finite-front} and~\ref{necessary-condition} establish the
    necessary part of the criterion for the accumulation of interaction points.
    On the other hand, the example constructed in Section~\ref{inf-front-accu}
    shows that this criterion is also sufficient. Hence, together, these
    results give a complete characterization of the mechanism leading to the
    accumulation of interaction points.
\end{remark}

Before providing the proof of Proposition \ref{finite-front} and Proposition \ref{necessary-condition}, we need some definition and notations. Next, subsections are dedicated for the definitions and notation needed in the proof of the propositions.
\subsection{Definitions and notations needed for Proposition \ref{finite-front} and \ref{necessary-condition}}
In subsection \ref{uni-bv}, we establish the uniform total variation bound on the approximate weak solution. Since we already know that the approximate solution satisfies total variation bound for all time uniformly with respect to $n$. Thus front-tracking approximate solution remains in a fixed compact subset $K$ of $vu$-plane. In particular, the \(v\)-components of all states lie in a compact interval contained in \((0,\infty)\). Let
\begin{align*}
    Z=\{v_{i}^{n}\in\mathcal{V}: -N\le i\le N \}
\end{align*}
% \[
%         v_{-N}<v_{-N+1}<\cdots<v_{N-1}<v_{N}
% \]
be the finite set of grid points of the approximation \(p_n\) which intersect this compact \(v\)-range, and set $I_i:=[v_i^n,v_{i+1}^n],$ for $ i=-N,\ldots,N-1$. Let $\beta$ be a rarefaction front. If the left and right states of $\beta$ are $(v_l,u_l)$ and $(v_r,u_r)$, respectively, then we define the \(v\)-support of $\beta$ by
\[
        V(\beta):=
        \bigl[
        \min\{v_l,v_r\},
        \max\{v_l,v_r\}
        \bigr].
\]
From the piecewise affine structure of $p_n$, for every rarefaction front $\beta$ there exists an index
\(i\in\{0,\ldots,N-1\}\) such that $V(\beta)\subseteq I_i.$ At $t=0$, for each rarefaction front we assign a unique number as follows
\[
        \iota(\beta):=i
        \qquad\Longleftrightarrow\qquad
        V(\beta)\subseteq I_i .
\]
Next we define $ K(\beta):=N-1-\iota(\beta),$ then $
       0<K(\beta)\le K_{\max}:=2N-1$.
Thus $K(\beta)$ is the number of grid intervals strictly to the right of the grid interval containing \(\beta\). We denote the family of the front $\beta$ by $\mathscr F(\beta)$.
\subsection{Shock labels}
At time \(t=0\), we assign a distinct label to every initial shock front from an index set. Let \(\mathscr I_{s_1}\) and \(\mathscr I_{s_2}\) denote the finite sets of label indexes attached to the initial shocks of $1$-family and $2$-family, respectively. Set
\[
        S_{max}:=|\mathscr I_{s_1}|+|\mathscr I_{s_2}|,
\]
where $|\mathscr I_{s_1}|$ and $|\mathscr I_{s_2}|$ are the number of initial shock fronts of $1$-family and $2$-family, respectively. Every current shock front \(\gamma\) which is a continuation or merger of initial shocks carries a nonempty set of initial shock labels, denoted by $\mathscr L(\gamma)$. If two same-family shocks $\gamma_1$ and $\gamma_2$ interact and merge into an outgoing shock $\widetilde{\gamma}$, define
\[
        \mathscr L(\widetilde \gamma):=\mathscr L(\gamma_1)\cup\mathscr L(\gamma_2).
\]
If a shock crosses another front and continues as a shock, its label set is transported unchanged to the outgoing shock.
\subsection{Rarefaction labels}
We call a front is a rarefaction root if either it is an initial rarefaction front or a rarefaction front created at an interaction two same family shock fronts. Every rarefaction front root $\beta$ is assigned a unique label from an index set, denoted by $\mathscr I_{R}.$ A rarefaction front root $\beta$ interact with a front of other family, then every the outgoing rarefaction front of $\mathscr F(\beta)$ is called a ${\it branch }$ of $\beta$ and every branch carries the same label as its root label.  For a rarefaction front $\beta$, define the set of opposite shock labels by
\[
        \mathscr I_{\rm opp}(\beta):=
        \begin{cases}
        \mathscr I_{s_2}, & \text{ for } \mathscr F(\beta)=1,\\
        \mathscr I_{s_1}, & \text{ for }  \mathscr F(\beta)=2,
        \end{cases}
\]
and let $\mathscr H(\beta)$ is the label set of the shock fronts label of $\mathscr I_{\rm opp}(\beta)$ which has already crossed $\beta$. If \(\beta\) is a rarefaction root, set
\[
        \mathscr H(\beta):=\emptyset.
\]
If \(\beta\) interacts with an opposite-family rarefaction, then every outgoing rarefaction branch \(\beta'\) of \(\beta\) satisfies
\[
        \mathscr H(\beta')=\mathscr H(\beta).
\]
If \(\beta\) interacts with an opposite-family shock \(\gamma\), then every outgoing rarefaction descendant \(\beta'\) of \(\beta\) satisfies
\[
        \mathscr H(\beta')=\mathscr H(\beta)\cup\mathscr L(\gamma).
\]
We define a quantity $L(\beta)$ for a rarefaction front $\beta$ as follows 
\[
        L(\beta):=(K_{\max}+1)\nu(\beta)+K(\beta),
\] 
where $\nu(\beta):=\#\bigl(\mathscr I_{\rm opp}(\beta)\setminus \mathscr H(\beta)\bigr)$ and $0\le \nu(\beta)\le S_{\max}$.
\subsection{Counting polynomial}
 Let  \[ \mathcal A:=
        \{(\alpha,\beta): \alpha \text{ and } \beta \text{ are approaching fronts to each other}\}.\]
For an approaching pair \((\alpha,\beta)\), we define its degree by
\[
        d(\alpha,\beta):=
        \begin{cases}
        0, & \alpha,\beta \text{ are both shocks},\\
        L(\beta), & \alpha \text{ is shock, } \beta\text{ is a rarefaction },\\
        L(\alpha)+L(\beta), & \alpha, \beta \text{ are both rarefactions}.
        \end{cases}
\]
Since \(0\le L(\beta)\le L_{\max}\), we have
\[
        0\le d(A,B)\le D,
        \qquad
        D:=2L_{\max}.
\]
For \(d=0,\ldots,D\), set
\[
        \mathcal A_d:=
        \{(\alpha,\beta): \alpha \text{ and }\beta \text{ are approaching fronts such that }, d(\alpha,\beta)=d\},
\]
and
\[
        A_d:=\#\mathcal A_d.
\]
Finally, define the shock count
\[
        M_S:=
        \#\{\text{ all shock fronts }\}.
\]
The counting vector is
\[
        X:=
        \bigl(
        M_S,
        A_D,A_{D-1},\ldots,A_0
        \bigr)
        \in\mathbb N^{D+2}.
\]
Now, we define well defined order on the vector $G$. Given two vectors
\[
        X=
        \bigl(
        M_S,
        A_D,A_{D-1},\ldots,A_0
        \bigr)
\]
and
\[
        \widetilde X=
        \bigl(
        \widetilde M_S,
        \widetilde A_D,\widetilde A_{D-1},\ldots,\widetilde A_0
        \bigr),
\]
we write
\[
        X<\widetilde X
\]
if either
\[
        M_S<\widetilde M_S,
\]
or \(M_S=\widetilde M_S\) and there exists \(d_0\in\{0,\ldots,D\}\) such that
\[
        A_d=\widetilde A_d
        \qquad\text{for every }d>d_0,
\]
and
\[
        A_{d_0}<\widetilde A_{d_0}.
\]
Now in the next subsection we show that if interaction are belongs to interaction class to
\[
        S_2+S_1,\qquad
        S_2+R_1,\qquad
        R_2+S_1,\qquad
        R_2+R_1,\qquad
        S_1+S_1,\qquad
        S_2+S_2.
\]
then the counting polynomial monotonically decreasing at every interaction.
\subsection{Monotone decay of the counting functional}\label{decay-count-poly}
We now restrict the interaction class to
\[
        S_2+S_1,\qquad
        S_2+R_1,\qquad
        R_2+S_1,\qquad
        R_2+R_1,\qquad
        S_1+S_1,\qquad
        S_2+S_2.
\]
Same-family shock-rarefaction interactions \(S_i+R_i\), \(i=1,2\), are excluded. Let \(X^-\) and \(X^+\) denote the counting vector immediately before and immediately after an interaction, respectively. We prove that, for every interaction in the above restricted class,
\[
        X^+<X^- .
\]
\noindent
\textbf{1. Same-family shock-shock interactions.} Consider two shock same family shock fronts are interating, then two incoming shocks merge into one outgoing shock of the same family. Therefore the number of current shock fronts decreases by one:
\[
        M_S^+=M_S^- -1.
\]
The quantities \(A_d\) may increase, because new opposite-family rarefaction fronts may be created. However, since the first component \(M_S\) strictly decreases. Hence $ X^+<X^- .$

\medskip
\noindent
\textbf{2. Opposite-family shock-shock interactions.}
Consider an interaction of two different family shock fronts. The interacting pair is an approaching pair of degree \(0\). After the interaction, the two shocks are no longer approaching and no new front has been created. All other approaching pairs are preserved with the same degree. Hence, $ A_d^+=A_d^- , \text{ for }d=1,\ldots,D,$ and $ A_0^+=A_0^- -1.$ Therefore $ X^+<X^-.$

\medskip

\noindent
\textbf{3. Opposite-family shock-rarefaction interactions.}
Let \(\beta\) be the incoming rarefaction front and let \(\gamma\) be the incoming shock. The interacting pair is an approaching pair of degree $ L(\beta).$ After the interaction, the outgoing fronts are in separated order, and hence this interacting approaching pair disappears. Let \(\beta'\) be any outgoing rarefaction branch of \(\beta\). Then, from the definition of $\mathscr H(\beta'),$ we have 
\[
        \mathscr H(\beta')=\mathscr H(\beta)\cup\mathscr L(\gamma).
\]
Indeed, if \(\mathscr L(\gamma)\subseteq\mathscr H(\beta)\), then every label carried by \(\gamma\) has already crossed the rarefaction branch \(\beta\). After such a crossing, the shock label and the branch are in separating order. Hence after the interaction \(\gamma\) is not approaching to  \(\beta\). Therefore at least one new shock label is added to \(\mathscr H(\beta)\), and hence $ \nu(\beta')\le \nu(\beta)-1$. Since \(K(\beta')\le K_{\max}\), we obtain
\[
\begin{aligned}
        L(\beta')
        &=(K_{\max}+1)\nu(\beta')+K(\beta')  \\
        &\le (K_{\max}+1)(\nu(\beta)-1)+K_{\max} \\
        &< (K_{\max}+1)\nu(\beta)+K(\beta)
        =L(\beta).
\end{aligned}
\]
Thus,
\[
        L(\beta')<L(\beta).
\]
Therefore, any additional approaching pair after the interaction must have the strictly smaller degree. 
% Now let \(\delta\) be any front different from the two interacting fronts. If \(h_n\) and \(\delta\) formed an approaching pair before the interaction, then any new approaching pair formed by \(h_n'\) and \(\delta\), has strictly smaller degree. %Indeed, if \(\delta\) is a shock, then the degree changes from \(L(h_n)\) to \(L(h_n')\). If \(\delta\) is a rarefaction, then the degree changes from \(L(h_n)+L(\delta)\) to \(L(h_n')+L(\delta)\). In both cases the degree strictly decreases.
All approaching pairs not involving \(\beta\) are unchanged, except for the interacting pair itself, which disappears. Thus no coefficient \(A_d\) with degree larger than the highest degree affected by \(\beta\) can increase. At the highest degree where the counts change, the count strictly decreases. We obtain $ X^+<X^-.$
\medskip

\noindent
\textbf{4. Opposite-family rarefaction-rarefaction interactions.}
Let \(\beta_{1}\) be the incoming $2$-family rarefaction and let \(\beta_{2}\) be the incoming $1$-family rarefaction. The interacting pair is an approaching pair of degree
\[
        L(\beta_{1})+L(\beta_{2}).
\]
After the interaction, the outgoing branches of \(\beta_{i}\) for $i=1,2$ are in separated order. Hence no outgoing branch of \(\beta_{1}\) and outgoing branch of \(\beta_{2}\) form an approaching pair with each other. Therefore the interacting pair disappears and has no replacement between the two interacting rarefaction roots. Let \(\beta_{i}'\) be an outgoing branch of \(\beta_{i}\) for $i=1,2$. Since no shock label is crossed,
\[
        \mathscr H(\beta_{i}')=\mathscr H(\beta_{i}),
        \qquad
\nu(\beta_{i}')=\nu(\beta_{i}).
\]
If \(\iota(\beta_{i})=l_{i}\), then by the rarefaction-rarefaction interaction every outgoing rarefaction branch \(\beta_{i}'\) of \(\beta_{i}\) satisfies $\iota(\beta_{i}')\in\{l_{i},l_{i}+1\}$. Consequently,
\[
    K(\beta_{i}')\in\{K(\beta_{i}),K(\beta_{i})-1\},
\]
and therefore
\[
        L(\beta_{i}')\le L(\beta_{i}).
\]
Moreover, for each incoming rarefaction \(\beta_{i}\), at most one outgoing branch satisfies $L(\beta_{i}')=L(\beta_{i}).$
Indeed, equality can occur only for the unique branch, if it exists, whose \(v\)-support remains in the same grid interval as \(\beta_{i}\) and other branch lie in the next interval and have strictly smaller \(K\). Hence,
\[
        L(\beta_{i}')\le L(\beta_{i}),
\]
% and at most one branch of \(\rho\) has equality. Similarly, if \(\sigma'\) is an outgoing branch of \(\sigma\), then
% \[
%         L(\sigma')\le L(\sigma),
% \]
Now consider approaching pairs involving an outgoing branch and a third front \(\delta\). Equal-degree continuations can occur at most one-to-one with old approaching pairs, because each incoming rarefaction has at most one equal-level branch. All other new pairs have strictly smaller degree.

The old interacting pair \((\beta_{1},\beta_{2})\), of degree \(L(\beta_{1})+L(\beta_{2})\), disappears and has no equal-degree replacement. Hence no coefficient of degree larger than \(L(\beta_{1})+L(\beta_{2})\) increases, and at degree \(L(\beta_{1})+L(\beta_{2})\) the count strictly decreases. We get $ X^+<X^-.$
Combining the all these cases, every interaction in the restricted class satisfies $ X^+<X^-.$
Since \(X\in\mathbb N^{D+2}\) and the number of shock fronts are always non increasing for the restricted interaction. Hence, there cannot be infinitely many such strict decreases. Hence only finitely many interactions occur in the restricted class.

\begin{proof}[Proof of Proposition \ref{finite-front}]
The proof of the proposition immediately follows from the monotonic decay of vector $X$ which comes from the analysis of Subsection \ref{decay-count-poly}.
\end{proof}

\begin{proof}[Proof of Proposition \ref{necessary-condition}]
Assume that there exists a neighbourhood $B_{\e}$ of $z^*$
containing only finitely many same-family shock--rarefaction interactions. Let $I_{B_{\e}}$ be the finite set of all interaction points in $B_{\e}$ of the form
\[
        S_i+R_i,\qquad i=1,2,
\]
where $S_{i}$ and $R_{i}$ are shock and rarefaction front of the \(i\)-th family, respectively. Since
$I_{B_{\e}}$ is finite, we can choose a smaller neighbourhood \(B_{\e'}\subset B_{\e}\) of
\(z^*\) such that
\[
        B_{\e'}\cap I_{B_{\e}}=\emptyset .
\]
Thus no same-family shock--rarefaction interaction occurs inside $B_{\e'}$. Since the approximate solution has uniformly bounded total variation and takes
values in a compact subset of the state space, all front speeds are uniformly
bounded. Hence there exists $\lambda_{max}>0$ such that every front speed
\(\lambda\) satisfies $ |\lambda|\leq \lambda_{max}$.
Since $B_{\e'}$ is a neighbourhood of $z^*$, we may choose \(h_{1}>0\) and
\(t_0<T^*\), with \(t_0\) sufficiently close to \(T^*\), such that the backward
cone
\[
        \Delta(z^*)
        :=
        \left\{
        (t,x):\ t_0<t<T^*,\ 
        |x-x^*|<h_{1}+\lambda_{max}(T^*-t)
        \right\}
\]
is contained in $B_{\e'}$. By construction, \(\Delta(z^*)\subset B_{\e'}\), and no same-family shock--rarefaction
interaction occurs inside $B_{\e'}$. Consequently, every interaction inside $\Delta(z^*)$ belongs to the restricted
interaction class. The cone $\Delta(z^*)$ starts with only finitely many fronts
entering through the bottom.
Therefore, by Proposition \ref{finite-front} only
finitely many interaction points can occur inside $\Delta(z^*)$. This contradicts the fact that \(z^*\) is an accumulation point. Therefore, every
neighbourhood of \(z^*\) contains infinitely many same-family shock--rarefaction
interactions. This establish the necessary part of the criterion for the accumulation of interaction points. If there is interaction of same family shock-rarefaction fronts, that is of the form $S_{i}+R_{i}$, then example in \ref{inf-front-accu} implies that by choosing the $n$ large enough, we can get same interaction pattern as in example which generate an accumulation of interaction points.
\end{proof}
\subsection{Adaption of nonphysical shock method for Riemann solver}
Now, due to the construction of example of infinite number of fronts can accumulate at some finite time $T$, we know that we can not construct a global front tracking approximation by using the exact Riemann solution for system \eqref{appr-p}. Thus, to resolve this problem we adapt the Bressan's nonphysical shock method. However, our previous analysis suggest that we need to modify the Riemann solver only for the Riemann problem generated by the interaction of same family shock-rarefaction interaction.
\subsection{Modified Riemann solver}
Assume that $\beta$ and $\gamma$ are the two incoming shock and rarefaction fronts of $i$-th family, respectively.  Let
\[
        \beta:\ (v_{l}, u_{l})\to (v_m,u_m),\qquad
        \gamma:\ (v_m,u_m)\to (v_r,u_r),
\]
hence,
\[\beta: u_{m}-u_{l}=(-1)^{3-i}\sqrt{(p_{n}(v_{m})-p_{n}(v_{l}))(v_{l}-v_{r})},\qquad
\gamma:u_{r}-u_{m}=(-1)^{3-i}\int\limits_{v_{m}}^{v_{r}}\sqrt{-p'_{n}(y)}\dy y,
\]
now we can define a non physical right state as follows
\begin{align*}
    (\hat{v}_r, \hat{u}_{r})=(v_{r}, u_{l}-(-1)^{3-i}\sqrt{(p_{n}(v_{m})-p_{n}(v_{l}))(v_{l}-v_{r})}),
    \end{align*}
finally the Riemann problem generated by the interaction of $\beta$ and $\gamma$ has modified solution as follows,
\[
        \beta':\ (v_{l}, u_{l})\to (\hat{v}_r,\hat{u}_r),\qquad
        \Gamma_{np}:\ (\hat{v}_r,\hat{u}_r)\to (v_r,u_r),
\]
where $\beta'$ is the outgoing shock front of $i$-th family and $\Gamma_{np}$ is the nonphysical front propogating with zero speed. The strength of nonphysical front is defined by $|\mu_{np}|=|\hat{u}_{r}-u_{r}|$.
 
 Now assume nonphysical front $\Gamma_{np}$ interacting with front $\beta$ of $i$-th family. By the definition of nonphysical front the $v$-component of left and right states of nonphysical fronts are same. Let 
\[
        \Gamma_{np}:\ (v_{l}, u_{l})\to (v_l,u_m),\qquad
        \beta:\ (v_l,u_m)\to (v_r,u_r),
\]
after the interaction let $\beta'$ is the outgoing front of $i$-th family and $\Gamma_{np}'$ is the outgoing nonphysical front, then we define
\[
        \beta':\ (v_{l}, u_{l})\to (\hat{v}_r,\hat{u}_r),\qquad
        \Gamma_{np}:\ (\hat{v}_r,\hat{u}_r)\to (v_r,u_r),
\]
where $(\hat{v}_r, \hat{u}_{r})=(v_{r}, u_{l}-(-1)^{3-i}\sqrt{(p_{n}(v_{m})-p_{n}(v_{l}))(v_{l}-v_{r})})$. For a fix $n\in\mathbb{N}$, we define a parameter $h_{n}>0$. Assume $\beta$ and $\gamma$ are the two incoming shock and rarefaction fronts of same family with strength $|\mu_{\beta}|$ and $|\mu_{\gamma}|$, respectively. We use the modified Riemann solver only when the $|\mu_{\beta}||\mu_{\gamma}|\le h_{n}$. In addition, $|\hat{u}_{r}-u_{r}|=|u_{l}-u_{m}|$, follows directly from the definition.

\section{Approximate solution to \eqref{p-sys}}\label{app-RH}
Next, we will show that the front tracking approximate solution obtained from our construction is generate an approximate weak solution to system \eqref{p-sys} in the sense that it satisfies the approximate Rankine Hugoniot condition, i.e.,
\begin{eqnarray}\label{weak-sol}
|\mathcal{E}_{n}|=|(\mathcal{E}^{1}_{n},\mathcal{E}^{2}_{n})|:=|(F(v_{r},u_{r})-F(v_{l},u_{l}))-\xi((v_{r},u_{r})-(v_{l},u_{l}))|\le\frac{C}{2^n}|v_{r}-v_{l}|,
\end{eqnarray}
 and  let $(Q,\eta)$ be a convex flux-entropy pair. Then 
\begin{equation}\label{e-sol}
    \mathcal{Q}:=(Q(v_{r},u_{r})-Q(v_{l},u_{l}))-\xi(\eta(v_{r},u_{r})-\eta(v_{l},u_{l}))\le\frac{C}{2^n}|v_{r}-v_{l}|,
\end{equation}
 where $\xi$ is the speed of the front between $(v_{l},u_{l})$ to $(v_{r},u_{r})$. By the construction itself the solution of each Riemann problem is an exact weak solution to \eqref{appr-p} except for the Riemann problem generated by the interaction of same family shock-rarefaction interactions. The front tracking approximation obtained from construction can have rarefaction fronts, shock fronts, contact discontinuities, and nonphysical fronts. We can prove the estimate \eqref{weak-sol}-\eqref{e-sol} for each of theses  except for the nonphysical front case which satisfies $|\mathcal{E}^{1}_{n}|=|u_{l}-u_{r}|$ and
          $|\mathcal{E}^{2}_{n}|=0$.
 \begin{itemize}
     \item [1.] Consider the case of 1-family rarefaction front is between  $(v_{l},u_{l})$ and  $(v_{r},u_{r})$. For each 1-family rarefaction front we know $v_{l}<v_{r}$, and $v_{l},v_{r}\in[v_{i}^{n}, v_{i+1}^{n}]$ for some $v_{i}^{n}\in\mathcal{V}$ and front speed $\xi=-\sqrt{-p'(v_{i}^{n})}$. From \eqref{R1}, we have $u_{l}-u_{r}=-\int\limits_{v_{l}}^{v_{r}}\sqrt{-p'(v_{i}^{n})}$. Therefore, 
     \begin{align*}
          |\mathcal{E}^{1}_{n}|&=|(u_{l}-u_{r})+\sqrt{-p'(v_{i}^{n})}(v_{r}-v_{l})|=0\\
          |\mathcal{E}^{2}_{n}|&=|p(v_{r})-p(v_{l})+\sqrt{-p'(v_{i}^{n})}(u_{r}-u_{l})|=\left|\int\limits_{v_{l}}^{v_{r}}p'(y)-p'(v_{i}^{n})\dy y\right|\le \frac{C}{2^{n}}(v_{r}-v_{l})%\left|\int\limits_{v_{l}}^{v_{r}}\left(\sqrt{-p'(v_{i}^{n})}-\sqrt{-p'(y)}\right)\dy y\right|\\
        %  &=\left|\int\limits_{v_{l}}^{v_{r}}\frac{p'(y)-p'(v_{i}^{n})}{\sqrt{-p'(v_{i}^{n})}+\sqrt{-p'(y)}}\dy y\right|\le \frac{C}{2^{n}}(v_{r}-v_{l}).
     \end{align*}
     Hence, $|\mathcal{E}_{n}|\le \frac{C}{2^{n}}(v_{r}-v_{l})$. For 2-family rarefaction front, we get the estimate in similar fashion.
     \item[2.] Consider the case of 1-family shock front is between  $(v_{l},u_{l})$ and  $(v_{r},u_{r})$. For each 1-family shock curve, we know $v_{l}>v_{r}$, and from \eqref{s1}, we have $\xi=\frac{u_{l}-u_{r}}{v_{r}-v_{l}}$. Hence,
     \begin{align*}
          |\mathcal{E}^{1}_{n}|&=|(u_{l}-u_{r})-\frac{u_{l}-u_{r}}{v_{r}-v_{l}}(v_{r}-v_{l})|=0\\
          |\mathcal{E}^{2}_{n}|&=|p(v_{r})-p(v_{l})-\frac{u_{l}-u_{r}}{v_{r}-v_{l}}(u_{r}-u_{l})|=\left|p_{n}(v_{l})-p(v_{l})+p(v_{r})-p_{n}(v_{r})\right|,
     \end{align*}
    if $v_{l}, v_{r}\in\mathcal{V}$, then $|\mathcal{E}^{2}_{n}|=0$, otherwise, suppose $v_{l}\in[v_{i}^{n},v_{i+1}^{n}]$ and $v_{r}\in[v_{k}^{n},v_{k+1}^{n}]$, then
    \begin{align*}
        \left|p_{n}(v_{l})-p(v_{l})+p(v_{r})-p_{n}(v_{r})\right|&=\left|\int\limits_{v_{i}^{n}}^{v_{l}}p'(v_{i}^{n})-p'(y)\dy y\right|+\left|\int\limits^{v_{k+1}^{n}}_{v_{r}}p'(y)-p'(v_{k}^{n})\dy y\right|\\&\le\frac{C}{2^n}(v_{l}-v_{r}).
    \end{align*}
    \item[3.] In the case of contact discontinuities, we must have $v_{l},v_{r}\in[v_{i}^{n}, v_{i+1}^{n}]$ for some $v_{i}^{n}\in\mathcal{V}$, hence this case can be handled similar to the case of rarefaction front.
    \item[4.] Let $(v_{l}, u_{l})$ and $(v_{r},u_{r})$ connected by a nonphysical front. By the definition of nonphysical front  $\xi=0$ and $v_{l}=v_{r}$. Then, $|\mathcal{E}^{1}_{n}|=|u_{l}-u_{r}|$ and
          $|\mathcal{E}^{2}_{n}|=0$

 \end{itemize}
 The estimate \eqref{e-sol} for approximate entropy inequality can be done completely similar fashion, hence we omitted the details to avoid the repetition of the calculations. Next we show the total strength of nonphysical fronts can be made arbitrary small.
\subsection{Total strength of nonphysical fronts}
In this section we will show that the total strength of nonphysical fronts is smaller than any given $\varepsilon>0$. Before starting the proof we recall the definition of {\it ``front generation''} from \cite{bressan2000hyperbolic}.
 \begin{definition}
 The generation of all fronts at time $t=0$ are defined $g=1$. Furthermore, suppose two fronts $\beta_{1}$ and $\beta_{2}$  with generations $g_{1}$ and $g_{2}$, respectively, are interacting then generation of outgoing fronts are defined as follows,
 \begin{enumerate}
      \item If $\beta_{1}$ and $\beta_{2}$ are fronts from the different family. Then, generation of outgoing fronts remain same generation as the generations of incoming fronts of the corresponding families

     \item If $\beta_{1}$ and $\beta_{2}$ are fronts from the same family. Then
    \begin{enumerate}
        \item Generation of outgoing fronts from the same family as $\beta_{i}$ are defined as $\min\{g_{1}, g_{2}\}$.
         \item Generation of outgoing fronts of different family from $\beta_{i}$ are defined as $\max\{g_{1}, g_{2}\}+1$.
    \end{enumerate}
     \end{enumerate}
 \end{definition}

\begin{lemma}[Smallness of high-generation nonphysical fronts]
\label{nonphysical-smallness}
 Let $\chi>1$ is fixed. Let $\mathcal{F}_{np}(t)$ denote the set of all nonphysical fronts at time $t$, and $g(\Gamma_{np})$ is the generation of front $\Gamma_{np}\in\mathcal{F}_{np}(t)$. Then, for every integer \(k\geq 1\) and every \(t\geq 0\),
\[
        \sum_{\substack{\Gamma\in\mathcal{F}_{np}(t)\\ g(\Gamma_{np})>k}}
        |\mu_{\Gamma_{np}}(t)|
        \leq
        C\,\chi^{-(k+1)}.
\]
Consequently, for any $\varepsilon>0$ given, by choosing the $k$ large enough 
\[
        \sum_{\substack{\Gamma\in\mathcal{F}_{np}(t)\\ g(\Gamma_{np})>k}}
        |\mu_{\Gamma_{np}}(t)|
        \leq\varepsilon.
\]
Thus, the total strength of high-generation nonphysical fronts can be
made arbitrarily small by choosing \(k\) sufficiently large.
\end{lemma}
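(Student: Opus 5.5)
We will prove the lemma with a generation-weighted Glimm-type functional, following Bressan's nonphysical-front argument. The abstract says this functional is the new ingredient.

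\textbf{Definitions.} For each generation level $k\ge1$ we introduce the partial strengths
\[
V_k(t):=\sum_{\alpha:\ g(\alpha)\ge k}|\mu_\alpha(t)|,\qquad Q_k(t):=\sum_{(\alpha,\beta)\in\mathcal A,\ \max\{g(\alpha),g(\beta)\}\ge k}|\mu_\alpha||\mu_\beta|.
\]
Here the sums run over physical and nonphysical fronts. For nonphysical fronts we use the strength $|\mu_{np}|=|\hat u_r-u_r|$, and all nonphysical fronts count as approaching all physical fronts. We then set
\[
\Phi(t):=\sum_{k\ge1}\chi^{k}\bigl(V_k(t)+C_0Q_k(t)\bigr),
\]
with $C_0$ a large constant to be fixed. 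The whole lemma reduces to two facts: $\Phi(0)\le C$, and $\Phi$ is nonincreasing in $t$. Granting these, every nonphysical front $\Gamma_{np}$ with $g(\Gamma_{np})>k$ contributes to $V_{k+1}$. Hence
\[
\chi^{k+1}\sum_{g(\Gamma_{np})>k}|\mu_{\Gamma_{np}}|\le\chi^{k+1}V_{k+1}(t)\le\Phi(t)\le\Phi(0),
\]
which is the claimed bound with $C=\Phi(0)$. The $\varepsilon$ statement follows since $\chi>1$.

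\textbf{Bound at $t=0$.} Every front has generation $1$ at time zero, so $V_k(0)=Q_k(0)=0$ for $k\ge2$. Therefore $\Phi(0)=\chi\bigl(V_1(0)+C_0Q_1(0)\bigr)\le \chi\,(TV(\bar w_0)+C_0TV(\bar w_0)^2)$, which is bounded using \eqref{v-approx} and $w_0\in\mathcal D$.

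\textbf{Monotonicity.} We check every interaction type, using the interaction estimates of Section~\ref{sec-uni-bv}: the change in strengths is bounded by $C|\mu_\alpha||\mu_\beta|$.

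\emph{Opposite-family interactions.} Generations are preserved. $V_k$ grows by at most $C|\mu_\alpha\mu_\beta|$ at levels $k\le\max(g_\alpha,g_\beta)$. $Q_k$ drops by $|\mu_\alpha\mu_\beta|$ at those same levels, up to a quadratic correction of size $C|\mu_\alpha\mu_\beta|\,V_1$, which is small because $TV<\delta$.

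\emph{Same-family interactions.} The outgoing opposite-family waves have generation $\max(g_1,g_2)+1$ and strength $O(|\mu_\alpha\mu_\beta|)$. They therefore enter $V_k$ for $k\le \max(g_1,g_2)+1$, one level higher than the pair's contribution to $Q_k$. The extra factor $\chi$ must be absorbed by choosing $C_0\ge 2C\chi$, so that $\chi^{\max+1}C|\mu_\alpha\mu_\beta|$ is dominated by $C_0\chi^{\max}|\mu_\alpha\mu_\beta|$, together with $\delta$ small. The surviving same-family wave has generation $\min(g_1,g_2)$, so it cannot raise any $V_k$ beyond its incoming contribution.

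\emph{Modified solver ($S_i+R_i$ with $|\mu_\beta\mu_\gamma|\le h_n$).} The new nonphysical front has strength $|u_l-u_m|$. Here we must show this is $O(|\mu_\beta||\mu_\gamma|)$. We plan to do this from the explicit shock/rarefaction formulas: the mismatch between the shock from $(v_l,u_l)$ evaluated at $v_r$ and the true composite state. If only a crude bound is available, we instead assign the nonphysical front the generation $\max+1$ and use the explicit curve formulas to get the product bound.

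\emph{Nonphysical with physical.} The nonphysical strength is carried over unchanged and its generation is kept. The outgoing physical front changes by $O(|\mu_{np}||\mu_\beta|)$, which is paid for by the nonphysical--physical terms in $Q_k$.

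Summing the sharp decrease of $C_0\chi^{k}Q_k$ against the growth of $\chi^kV_k$ and $\chi^{k+1}V_{k+1}$ gives $\Phi(t+)\le\Phi(t-)$ at each interaction.

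\textbf{Main obstacle.} We expect the hardest step to be the same-family shock--rarefaction case under the modified solver: obtaining a genuinely quadratic bound for $|\mu_{np}|$ in terms of the incoming strengths with only Lipschitz $p_n$. The definition as written gives $|\hat u_r-u_r|=|u_l-u_m|$, which is linear in the shock strength. If the quadratic bound fails, the fallback is to reinterpret the strength via the approximate Rankine--Hugoniot defect \eqref{weak-sol}. In that case the threshold $h_n$ itself supplies the smallness, and the functional argument is unchanged.
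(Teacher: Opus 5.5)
Your route is the paper's. There the lemma is proved by exactly your last step: for $g(\Gamma_{np})>k$ one has $|\mu_{\Gamma_{np}}|\le\chi^{-(k+1)}|\mu_{\Gamma_{np}}|\chi^{g(\Gamma_{np})}$, and the sum is bounded by $\chi^{-(k+1)}V_\chi(t)$. The bound $V_\chi\le C$ is imported from Lemma~\ref{wei-gen-bound}, where $V_\chi+KQ_\chi$ (pairs weighted by $\chi^{\max g+1}$) is shown to be nonincreasing. Since $\sum_{k\le g}\chi^k=\chi(\chi^g-1)/(\chi-1)$ is comparable to $\chi^g$, your layer-cake functional is the same object up to constants, and you are re-proving Lemma~\ref{wei-gen-bound} rather than citing it.

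Two steps of that re-proof need repair.

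First, the nonphysical--physical crossings. The payment you describe is unavailable under your own convention: if every nonphysical front always counts as approaching every physical front, the crossing pair never leaves $Q_k$. No payment is needed, because the crossing is exact. The wave curves of \eqref{appr-p} are invariant under $u\mapsto u+c$, and a nonphysical front is a pure $u$-jump at fixed $v$, so both fronts keep their strengths and $\Delta V_k=0$. This is what Lemma~\ref{wei-gen-bound} uses, and it lets you keep nonphysical fronts out of $Q$ altogether.

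Second, a new pair $(\tilde\alpha,\delta)$ carries weight $\chi^{\max\{g(\tilde\alpha),g(\delta)\}}\le\chi^{g(\tilde\alpha)}\chi^{g(\delta)}$. The correction is therefore $C|\mu_\alpha\mu_\beta|\chi^{g(\tilde\alpha)}\sum_\delta|\mu_\delta|\chi^{g(\delta)}$, not $C|\mu_\alpha\mu_\beta|V_1$ at the levels of the interacting pair. It is absorbed by a bootstrap on $\Phi(t)\le\Phi(0)$, which is the paper's term $C_2V_\chi(\tau-)I_\chi$, and not by $TV<\delta$ alone.

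The genuine gap is the one you flagged, and neither of your remedies closes it. At a modified $S_i+R_i$ interaction, monotonicity needs $|\mu_{np}|\le C|\mu_\beta||\mu_\gamma|$ with $C$ independent of the wave sizes, i.e.\ \eqref{non-phy-est}.

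Your fallback changes nothing. The Rankine--Hugoniot defect of a nonphysical front is $|\mathcal E^1_n|=|u_l-u_r|$, which is its strength. Moreover $h_n$ bounds only the product $|\mu_\beta||\mu_\gamma|$. Smallness through $h_n$ is the mechanism of Lemma~\ref{lem:fixed-generation-counting-low-smallness}, which counts fronts of bounded generation and so gives nothing for $g>k$.

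Your primary plan cannot work uniformly either. Let $\hat u_r$ be the point at $v_r$ on the $1$-shock curve through $(v_l,u_l)$; this is the natural reading, and the paper's identity $|\hat u_r-u_r|=|u_l-u_m|$ contradicts \eqref{non-phy-est}. Take $v_l=v_i+\epsilon$, $v_m=v_i-\epsilon$, $v_r=v_i$. The shock speed $-\sqrt{(\Lambda_{i-1}^2+\Lambda_i^2)/2}$ exceeds the rarefaction speed $-\Lambda_{i-1}$, so the pair does interact. Then $|\mu_\beta|=\epsilon\sqrt{2(\Lambda_{i-1}^2+\Lambda_i^2)}$, $|\mu_\gamma|=\epsilon\Lambda_{i-1}$ and $u_l-\hat u_r=\epsilon\Lambda_i$, hence
\[
|\hat u_r-u_r|=\frac{\epsilon\,(\Lambda_{i-1}-\Lambda_i)^2}{\sqrt{2(\Lambda_{i-1}^2+\Lambda_i^2)}+\Lambda_{i-1}+\Lambda_i}.
\]
This is of order $4^{-n}\epsilon$, while $|\mu_\beta||\mu_\gamma|$ is of order $\epsilon^2$.

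The same effect is visible in Lemmas~\ref{shock-rarefaction} and~\ref{shock-shock}. When the $v$-range straddles a grid point, $\Lambda_j-\Lambda_k$ is of order $2^{-n}$ however small the waves are, so those bounds are linear, not quadratic, in the strengths. The paper's proof does not supply this missing step either: Lemma~\ref{wei-gen-bound} rests on \eqref{non-phy-est} at exactly this point. Handling small fronts that straddle grid points requires an additional idea.
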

\begin{proof} 
Let $\mathcal F(t)$ denote the set of all fronts at time $t$, and $g(\gamma)$ is the generation of front $\gamma$. Define 
\[ V_{\chi}(t) := \sum_{\gamma\in\mathcal F(t)}|\mu_\gamma(t)|\,\chi^{g(\gamma)}. \]
From Lemma \ref{wei-gen-bound}, we know that $V_{\chi}(t)$ is uniformly bounded, i.e., $V_{\chi}(t)\le C$. 
Fix \(k\geq 1\) and \(t\geq 0\). Let \(\Gamma_{np}\in\mathcal{F}_{np}(t)\) be a
nonphysical front with $ g(\Gamma_{np})>k.$ Hence
\[
        |\mu_{\Gamma_{np}(t)}|
        \leq
        \chi^{-(k+1)}
        |\mu_{\Gamma_{np}(t)}|\,\chi^{g(\Gamma_{np})}.
\]
Summing over all nonphysical fronts with generation strictly larger than \(k\),
we obtain
\[
        \sum_{\substack{\Gamma\in\mathcal{F}_{np}(t)\\ g(\Gamma)>k}}
        |\mu_{\Gamma_{np}(t)}|
        \leq
        \chi^{-(k+1)}
        \sum_{\substack{\Gamma\in\mathcal{F}_{np}(t)\\ g(\Gamma)>k}}
        |\mu_{\Gamma_{np}(t)}|\,\chi^{g(\Gamma_{np})}.
\]
Since \(\mathcal{F}_{np}(t)\subset \mathcal F(t)\), thus
\[
        \sum_{\substack{\Gamma\in\mathcal{F}_{np}(t)\\ g(\Gamma)>k}}
        |\mu_{\Gamma_{np}(t)}|
        \leq
        \chi^{-(k+1)}V_{\chi}(t)\le C\chi^{-(k+1)}.
\]
Since $\chi>1$, therefore, for every \(\varepsilon>0\), there exists \(k\) sufficiently large
such that 
$ C\chi^{-(k+1)}<\varepsilon.$ Hence,
\[
        \sum_{\substack{\Gamma\in\mathcal{F}_{np}(t)\\ g(\Gamma)>k}}
        |\mu_{\Gamma_{np}(t)}|
        \leq\varepsilon.
\]
This completes the proof of the lemma.
\end{proof}

\begin{lemma}[Fixed-generation counting and low-generation smallness]
\label{lem:fixed-generation-counting-low-smallness}
Let $\mathcal{F}_{np}(t)$ denote the set of all nonphysical fronts at time $t$, and $g(\Gamma_{np})$ is the generation of front $\Gamma_{np}\in\mathcal{F}_{np}(t)$.
Then, for every $k\geq 1$, there exists a constant $P_{k}>0$, such that
\begin{align*}
\sum_{\substack{\Gamma\in\mathcal{F}_{np}(t)\\ g(\Gamma_{np})<k}}
        |\mu_{\Gamma_{np}}(t)|
        \leq
        P_{k}h,  
\end{align*}
where $h$ is the threshold upper bound of the nonphysical front. Consequently, for any $\varepsilon>0$ given, by choosing the $h$ large enough 
\[
        \sum_{\substack{\Gamma\in\mathcal{F}_{np}(t)\\ g(\Gamma_{np})<k}}
        |\mu_{\Gamma_{np}}(t)|
        \leq\varepsilon.
\]
\end{lemma}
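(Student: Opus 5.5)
This is Bressan's standard argument for low-generation nonphysical fronts, adapted to the modified solver used here. Note that the statement says ``by choosing $h$ large enough''. This must mean $h=h_n$ \emph{small}: the bound is $P_k h$, and the modified solver is used only when $|\mu_\beta||\mu_\gamma|\le h_n$. I will prove $\sum_{g<k}|\mu_{\Gamma_{np}}|\le P_k h$ and then take $h<\varepsilon/P_k$.

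\textbf{Step 1 (strength of a new nonphysical front).} A nonphysical front is created only at an $S_i+R_i$ interaction with $|\mu_\beta||\mu_\gamma|\le h$. Its strength is $|\hat u_r-u_r|$. I will compare $(\hat v_r,\hat u_r)$ with $(v_r,u_r)$ using the explicit shock curves \eqref{S1}--\eqref{s2} and the affine rarefaction curves \eqref{R1}--\eqref{R2}. The difference is the $u$-gap at $v=v_r$ between the shock curve through $(v_l,u_l)$ and the rarefaction curve through $(v_m,u_m)$. Using the symmetric third-order interaction estimates of Section~\ref{sec-uni-bv}, I expect to bound this gap by $C|\mu_\beta||\mu_\gamma|\le Ch$. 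This is the step I expect to be the main obstacle, since the printed definition of $\hat u_r$ does not obviously give a quantity controlled by the product. If the third-order estimate only gives $C|\mu_\beta||\mu_\gamma|(|\mu_\beta|+|\mu_\gamma|)$, that is still $\le Ch$. Nonphysical fronts have zero speed and, by the transport rule, their strength is unchanged when they cross physical fronts, so each keeps its initial strength up to a factor $1+C\,\mathrm{TV}$.

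\textbf{Step 2 (counting fronts of fixed generation).} Next I bound the number of nonphysical fronts of generation $<k$. A front of generation $j$ arises only from interactions involving fronts of generation $\le j$, and only same-family interactions raise the generation. For each $j$ I will bound, by induction, the number $N_j$ of fronts of generation $\le j$ and the number of interactions among them. Physical fronts of generation $\le j$ are finitely many by induction, with their number depending on $n$ and the initial data but not on $h$. Among such fronts, Proposition~\ref{finite-front} together with the decay of $X$ from Subsection~\ref{decay-count-poly} shows that only finitely many interactions occur outside the $S_i+R_i$ class. Every $S_i+R_i$ interaction with $|\mu_\beta||\mu_\gamma|\le h$ is resolved by the modified solver. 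It produces no new physical fronts and reduces the number of fronts, because the rarefaction is absorbed. $S_i+R_i$ interactions with product $>h$ are finite in number because each reduces a Glimm-type functional by at least $ch$. This last bound depends on $h$, so for $h$-independence I will instead use that in the accurate solver every interaction with a rarefaction of bounded generation has a count controlled by the counting vector $X$, which is independent of $h$. This yields a constant $P_k'$, independent of $h$, bounding the number of nonphysical fronts of generation $<k$.

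\textbf{Step 3 (conclusion).} Combining the two steps gives
\[
\sum_{\substack{\Gamma\in\mathcal F_{np}(t)\\ g(\Gamma_{np})<k}}|\mu_{\Gamma_{np}}(t)|\le C P_k' h=:P_k h .
\]
Choosing $h<\varepsilon/P_k$ then gives the second claim. Together with Lemma~\ref{nonphysical-smallness}, taking $k$ large first and then $h$ small makes the total strength of nonphysical fronts arbitrarily small.
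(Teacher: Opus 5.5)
Your Steps 1 and 3 are what the paper does. Each nonphysical front has strength at most $h$ by \eqref{non-phy-est}, so the sum is at most $P_kh$, and $h$ is then taken \emph{small}; ``large'' is indeed a slip. The gap is Step 2, the $h$-independent count, and your patch does not work. The decay of the counting vector $X$ in Subsection~\ref{decay-count-poly}, and with it Proposition~\ref{finite-front}, is proved only for the class that \emph{excludes} $S_i+R_i$. At an accurate $S_i+R_i$ interaction the rarefaction is replaced by a shock of the opposite family. So the leading component $M_S$ of $X$ increases by one and $X$ goes up. Thus $X$ says nothing about exactly the interactions you need to count.

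Even your premise that physical fronts of generation $\le j$ are finitely many, independently of $h$, fails for $j=1$. In the cascade of Appendix~\ref{inf-front-accu}, $\beta_j,\gamma_1^j,\gamma_2^j$ all have generation one. The generation-two shock $\delta_j$ splits $\gamma_2^j$ at the grid point $v_*$ into the generation-one fronts $\gamma_1^{j+1},\gamma_2^{j+1}$, by the different-family rule. So low-generation fronts are created by interactions with higher-generation ones, and an induction on generation does not close.

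The count is also $h$-dependent. We have $|\mu_{\beta_j}|\ge D(v_*)>0$ and $|\mu_{\gamma_1^j}|=c_1x_j$. With $m_n:=\min_{[v_m,v_*]}(-D')>c_1$, the computation giving $x_{j+1}\le\lambda_nx_j$ also gives $x_{j+1}\ge (m_n-c_1)/(m_n+c_1)\,x_j$. Hence the accurate solver runs for a number of cycles of order $\log(1/h)$, and the number of interactions among generation-one fronts is unbounded as $h\to0$.

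The paper's proof avoids $X$. It uses the bare induction $P_1=N_0+M_nN_0^2$, $P_{k+1}=P_k+M_nP_k^2$, with $M_n$ the maximal number of fronts emitted by one Riemann problem. However, its steps ``at most $N_0^2$ interactions between generation-one fronts'' and ``both incoming fronts must have generation at most $k$'' run into the same cascade, so copying it would not close your gap.

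What must really be bounded independently of $h$ is the number of \emph{modified} $S_i+R_i$ interactions between fronts of generation $\le k-2$, since only these create nonphysical fronts of generation $<k$. For the ``consequently'' part alone, a bound $o(1/h)$ would suffice. Neither $X$ nor the Glimm decrement supplies this. The Glimm decrement only counts the accurate $S_i+R_i$ interactions, and gives $O(1/h)$ for those.
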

\begin{proof}
For fix $n$, let the initial datum be piecewise constant and assume that it has
only finitely many initial fronts. Let \(N_0\) denote the number of initial
fronts. We first prove the fixed-generation counting estimate. Since \(n\) is fixed and all states remain in a fixed compact set, there is a
constant $M_n$, independent of $h$, such that each Riemann problem in
the construction produces at most \(M_n\) outgoing fronts. This includes the
fronts in rarefaction fans and the possible nonphysical front introduced by
the simplified solver. We first prove the estimate for \(k=1\). Since initially we have $N_0$ fronts of generation $1$. The number of possible interactions between generation $1$ fronts is bounded
by $N_0^2 .$ At each such interaction, at most \(M_n\) outgoing fronts can be produced.
Therefore the total number of generation $1$ fronts are
bounded by $N_0+M_nN_0^2$.
Hence
\begin{align*}
\mathcal{F}_{np}(1, t):=
\#\{\Gamma_{np}\in\mathcal{F}(t): g(\Gamma_{np})\leq 1\}\le P_{1}:=N_0+M_nN_0^2.
\end{align*}
By induction hypothesis assume that for some $k\geq1$, there exists a
constant \(P_{k}\), independent of parameter $h$, such that
\begin{align*}
\mathcal{F}_{np}(k, t):=
\#\{\Gamma_{np}\in\mathcal{F}(t): g(\Gamma_{np})\leq k\}\le P_{k}.
\end{align*}
 We prove the estimate for generation $k+1$. In order to create a front of generation $k+1$, both incoming fronts must have generation at most $k$, and at least one of them must have generation exactly $k$. By the induction
hypothesis, there are at most \(P_{k}\) fronts of generation $k$.
Consequently, the number of interactions between fronts of generation at most
\(k\) is bounded by $ P_{k}^2$. At each such interaction, at most $M_n$ new fronts can be generated.
Therefore the number of fronts of generation at most $k+1$ is bounded by
\begin{align*}
\mathcal{F}_{np}(k+1, t):=
\#\{\Gamma_{np}\in\mathcal{F}(t): g(\Gamma_{np})\leq k+1\}\le P_{k+1}:=P_{k}+M_nP_{k}^2.
\end{align*}
This constant depends on \(k\), \(n\), and the number of initial fronts, but
it is independent of $h$. By induction, the estimate holds for all $k\in\mathbb{N}$. We now prove the smallness of low-generation nonphysical fronts. Since each nonphysical front $|\mu_{\Gamma_{np}}|\le h$, hence
\begin{align*}
\sum_{\substack{\Gamma\in\mathcal{F}_{np}(t)\\ g(\Gamma_{np})<k}}
        |\mu_{\Gamma_{np}}(t)|
        \leq
        P_{k}h,  
\end{align*}
Since $P_{k}$ is independent of $h>0$. Hence, for given $\varepsilon>0$ we can choose $h>0$ sufficiently
small such that
\begin{align*}
\sum_{\substack{\Gamma\in\mathcal{F}_{np}(t)\\ g(\Gamma_{np})<k}}
        |\mu_{\Gamma_{np}}(t)|
        \leq
        \varepsilon. 
\end{align*}
This completes the proof.
\end{proof} 
\section{Preliminaries and interaction estimates}\label{sec-uni-bv}
In this section we prove some important technical lemmas which we require for the interaction estimates.
Next lemma provides a qualitative comparison between the rarefaction and shock curves of the system \eqref{appr-p}.
\subsection{Preliminaries lemmas required for the interaction estimates}
\begin{lemma}\label{shock-bound}
Let $S_i$ be a shock curve such that $(v_{l}, u_{l})$ and $(v_{r}, u_{r})$ are left and right states, respectively, with $v_{l}\in[v_{j},v_{j+1}]$ and $v_{r}\in[v_{k},v_{k+1}]$. Then,
\begin{enumerate}
    \item For $S_{1}$, $v_{l}>v_{r}$, then
    \begin{enumerate}
        \item $|u_{l}-u_{r}|\ge(v_{l}-v_{j})\Lambda_{j}+(v_{k+1}-v_{r})\Lambda_{k}+\sum\limits_{k+1}^{j-1}(v_{z+1}-v_{z})\Lambda_{z}$,
      \item\label{Lambda_bound} $(v_{l}-v_{r})\Lambda_{j+1}\le|u_{l}-u_{r}|\le(v_{l}-v_{r})\Lambda_{k+1}$.
    \end{enumerate}
    \item For $S_{2}$, $v_{l}<v_{r}$, then
  \begin{enumerate}
        \item $|u_{l}-u_{r}|\ge(v_{r}-v_{k})\Lambda_{k}+(v_{j+1}-v_{l})\Lambda_{j}+\sum\limits_{j+1}^{k-1}(v_{z+1}-v_{z})\Lambda_{z}$,
      \item $ (v_{l}-v_{r})\Lambda_{k+1}\le|u_{l}-u_{r}|\le(v_{l}-v_{r})\Lambda_{j+1}$,
    \end{enumerate}
\end{enumerate}
where $\Lambda_{z}=\sqrt{\frac{p(v_{z})-p(v_{z+1})}{v_{z+1}-v_{z}}}$.
\end{lemma}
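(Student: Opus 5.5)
The plan is to reduce everything to an exact formula for the jump in $u$ along the shock curve of \eqref{appr-p}. By \eqref{S1}–\eqref{s2},
\[
|u_l-u_r|^2=(v_l-v_r)\bigl(p_n(v_r)-p_n(v_l)\bigr)
\]
for $S_1$ (where $v_r<v_l$), and symmetrically for $S_2$. Since $p_n$ is affine on each $I_z$ with slope $p_n'=-\Lambda_z^2$, we decompose $[v_r,v_l]$ into the pieces $[v_r,v_{k+1}]$, $[v_z,v_{z+1}]$ for $k+1\le z\le j-1$, and $[v_j,v_l]$. Write $\ell_z$ for the length of the piece lying in $I_z$, so that $\ell_k=v_{k+1}-v_r$, $\ell_j=v_l-v_j$, and $\ell_z=v_{z+1}-v_z$ in between. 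Then
\[
v_l-v_r=\sum_{z=k}^{j}\ell_z,\qquad p_n(v_r)-p_n(v_l)=\sum_{z=k}^{j}\ell_z\Lambda_z^2 .
\]
Hence $|u_l-u_r|^2=\bigl(\sum\ell_z\bigr)\bigl(\sum\ell_z\Lambda_z^2\bigr)$.

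For (a), we apply Cauchy–Schwarz to the vectors $(\sqrt{\ell_z})_z$ and $(\sqrt{\ell_z}\,\Lambda_z)_z$. This gives
\[
\Bigl(\sum\ell_z\Bigr)\Bigl(\sum\ell_z\Lambda_z^2\Bigr)\ge\Bigl(\sum\ell_z\Lambda_z\Bigr)^2 .
\]
The right-hand side is exactly the square of the claimed lower bound. That bound is the $u$-increment along the rarefaction curve $R_1^n$ between the same $v$-values, so (a) says the shock curve lies outside the piecewise-linear rarefaction curve. The $S_2$ case is identical with the roles of $l$ and $r$ exchanged: the decomposition now runs over $[v_l,v_{j+1}]$, the full intervals $I_z$ for $j+1\le z\le k-1$, and $[v_k,v_r]$.

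For (b), we use the monotonicity of the slopes. Since $p(v)=v^{-\gamma}$ is convex and decreasing, the secant slopes $-\Lambda_z^2$ increase with $z$, so $\Lambda_z$ is decreasing in $z$. Therefore
\[
\min_{k\le z\le j}\Lambda_z^2\,(v_l-v_r)\;\le\;\sum\ell_z\Lambda_z^2\;\le\;\max_{k\le z\le j}\Lambda_z^2\,(v_l-v_r),
\]
with the minimum attained at $z=j$ and the maximum at $z=k$. Substituting into the exact formula and taking square roots gives
\[
(v_l-v_r)\Lambda_j\le|u_l-u_r|\le(v_l-v_r)\Lambda_k .
\]
The lower bound in the statement follows at once, because $\Lambda_{j+1}\le\Lambda_j$. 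For $S_2$ the same argument bounds $|u_l-u_r|$ between $|v_l-v_r|\Lambda_k$ and $|v_l-v_r|\Lambda_j$; we read the factor $(v_l-v_r)$ in the statement as $|v_l-v_r|$.

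The main obstacle is reconciling the upper bound in (b) with its stated index. Our argument gives the upper bound with $\Lambda_k$, and since $\Lambda_{k+1}\le\Lambda_k$ this is weaker than the bound with $\Lambda_{k+1}$ that is printed. We do not expect the $\Lambda_{k+1}$ version to hold in general. For example, if $v_r$ lies in the interior of $I_k$ and most of $[v_r,v_l]$ is in $I_k$, then the average of $\Lambda_z^2$ is close to $\Lambda_k^2$, which exceeds $\Lambda_{k+1}^2$. We would therefore first check the intended labelling convention for $\Lambda_z$: the statement defines $\Lambda_z$ with $v_{z+1}-v_z$ in the denominator, whereas \eqref{eig-val} uses $v_z-v_{z+1}$ inside the root, and an offset between the two conventions could account for the shift. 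Failing that, we would state the sharp bounds with $\Lambda_j$ and $\Lambda_k$. Those bounds are all that the later interaction estimates should require.
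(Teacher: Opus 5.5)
Your proof is correct, and for (a) and the lower bound in (b) it follows the paper's argument. The paper expands $\bigl(\sum_z\ell_z\bigr)\bigl(\sum_z\ell_z\Lambda_z^2\bigr)$ term by term and applies $\ell_a\ell_b(\Lambda_a^2+\Lambda_b^2)\ge 2\ell_a\ell_b\Lambda_a\Lambda_b$ to each cross term, which is Cauchy--Schwarz written out by hand. For (b) it replaces every $\Lambda$ by an extreme value, which is what your averaging argument does; yours also gives the sharper lower bound with $\Lambda_j$.

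Your objection to the upper bound is correct, and it locates the error in the paper's proof. The paper gets the $\Lambda_{k+1}$ upper bound ``similarly'', by replacing every $\Lambda$ in the expansion by $\Lambda_{k+1}$. That replacement is valid for $\Lambda_z$ with $z\ge k+1$. It is not valid for the factor $\Lambda_k$ attached to the piece $v_{k+1}-v_r$, since strict convexity of $p$ gives $\Lambda_k>\Lambda_{k+1}$.

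Your counterexample works. Concretely, take $v_r$ just above $v_k$ and $v_l$ just above $v_{k+1}$, so $j=k+1$. Then $|u_l-u_r|$ is close to $(v_{k+1}-v_k)\Lambda_k$, while $(v_l-v_r)\Lambda_{k+1}$ is close to the strictly smaller $(v_{k+1}-v_k)\Lambda_{k+1}$. The $S_2$ upper bound with $\Lambda_{j+1}$ fails in the same way. There the printed factor $(v_l-v_r)$ is negative, so your reading as $|v_l-v_r|$ is needed in any case.

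No labelling convention rescues the printed index. The formula in \eqref{eig-val} only has a sign slip: its radicand is negative as written. Both formulas attach $\Lambda_z$ to the cell $I_z$, so there is no offset. Your sharp bounds with $\Lambda_j$ and $\Lambda_k$ are therefore the right statement. They suffice for the comparability used in Lemma \ref{lemmamtv}, and the other places that cite this lemma only invoke lower bounds.

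One small caveat: the printed formula in (a), and your decomposition, tacitly require $k<j$. If both states lie in the same cell, the printed right-hand side strictly exceeds $(v_l-v_r)\Lambda_k=|u_l-u_r|$. That case must be excluded, or treated with the single piece $\ell_k=v_l-v_r$.
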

\begin{proof}
We prove the estimate for the $S_{1}$, and the estimate for $S_{2}$ can be done in similar fashion. by using the expression of the shock curve $S_{1}$, we can write,
% \begin{align*}
% |u_{l}-u_{r}|&=\sqrt{(v_{l}-v_{r})(p(v_{r})-p(v_{l}))}\\
% &= \sqrt{\left((v_{l}-v_{j})+(v_{k+1}-v_{r})+\sum\limits_{k+1}^{j-1}(v_{z+1}-v_{z})\right)\left((v_{l}-v_{j})\Lambda_{j}^{2}+(v_{k+1}-v_{r})\Lambda_{k}^{2}+\sum\limits_{k+1}^{j-1}(v_{z+1}-v_{z})\Lambda_{z}^{2}\right)}\\
% &=\sqrt{\left((v_{l}-v_{j})\Lambda_{j}\right)^{2}+\left((v_{k+1}-v_{r}\right)\Lambda_{k})^{2}+\sum\limits_{k+1}^{j-1}\left((v_{z+1}-v_{z})\Lambda_{z}\right)^{2}+}
% \end{align*}
\begin{align*}
(u_{l}-u_{r})^2
&= (v_{l}-v_{r})\bigl(p_{n}(v_{r})-p_{n}(v_{l})\bigr) \\[0.5ex]
&= \scalebox{0.93}{
$\Bigl((v_{l}-v_{j})+(v_{k+1}-v_{r})
+\sum\limits_{k+1}^{j-1}(v_{z+1}-v_{z})\Bigr)
\Bigl((v_{l}-v_{j})\Lambda_{j}^{2}
+(v_{k+1}-v_{r})\Lambda_{k}^{2}
+\sum\limits_{k+1}^{j-1}(v_{z+1}-v_{z})\Lambda_{z}^{2}\Bigr)$
} \\[0.5ex]
&= \bigl((v_{l}-v_{j})\Lambda_{j}\bigr)^{2}
+\bigl((v_{k+1}-v_{r})\Lambda_{k}\bigr)^{2}+(v_{l}-v_{j})(v_{k+1}-v_{r})\left(\Lambda_{j}^{2}+\Lambda_{k}^{2}\right)+\sum\limits_{k+1}^{j-1}\bigl((v_{z+1}-v_{z})\Lambda_{z}\bigr)^{2}\\
&+\sum\limits_{k+1}^{j-1}
\bigl((v_{l}-v_{j})(v_{z+1}-v_{z})\left(\Lambda_{j}^{2}+\Lambda_{z}^{2}\right)+\sum\limits_{k+1}^{j-1}
\bigl((v_{k+1}-v_{r})(v_{z+1}-v_{z})\left(\Lambda_{k}^{2}+\Lambda_{z}^{2}\right)\\
&+\sum\limits_{z,z'=k+1}^{j-1}
(v_{z+1}-v_{z})(v_{z'+1}-v_{z'})(\Lambda_{z}^2+\Lambda_{z'}^2)\\&\mbox{ by using the Young's inequality $a^2+b^2\ge2ab$ }\\
&\ge\bigl((v_{l}-v_{j})\Lambda_{j}\bigr)^{2}
+\bigl((v_{k+1}-v_{r})\Lambda_{k}\bigr)^{2}+2(v_{l}-v_{j})(v_{k+1}-v_{r})\Lambda_{j}\Lambda_{k}+\sum\limits_{k+1}^{j-1}
\bigl((v_{z+1}-v_{z})\Lambda_{z}\bigr)^{2}\\
&+\sum\limits_{k+1}^{j-1}2(v_{l}-v_{j})(v_{z+1}-v_{z})\Lambda_{j}\Lambda_{z}+\sum\limits_{k+1}^{j-1}2(v_{k+1}-v_{r})(v_{z+1}-v_{z})\Lambda_{k}\Lambda_{z}\\
&+\sum\limits_{z,z'=k+1}^{j-1}
2(v_{z+1}-v_{z})(v_{z'+1}-v_{z'})\Lambda_{z}\Lambda_{z'}\\
&=\left((v_{l}-v_{j})\Lambda_{j}
+(v_{k+1}-v_{r})\Lambda_{k}+\sum\limits_{k+1}^{j-1}(v_{z+1}-v_{z})\Lambda_{z}\right)^{2}.
\end{align*}
Hence, this gives the $|u_{l}-u_{r}|\ge(v_{l}-v_{j})\Lambda_{j}+(v_{k+1}-v_{r})\Lambda_{k}+\sum\limits_{k+1}^{j-1}(v_{z+1}-v_{z})\Lambda_{z}$. To prove the second estimate, we can use the fact that $\Lambda_{j+1}\le\Lambda_{s+1}\le\Lambda_{z}\le\Lambda_{k+1}$, and the following expression from the previous calculation,
\begin{align*}
   (u_{l}-u_{r})^2
&= \bigl((v_{l}-v_{j})\Lambda_{j}\bigr)^{2}
+\bigl((v_{k+1}-v_{r})\Lambda_{k}\bigr)^{2}+(v_{l}-v_{j})(v_{k+1}-v_{r})\left(\Lambda_{j}^{2}+\Lambda_{k}^{2}\right)\\
&+\sum\limits_{k+1}^{j-1}
\bigl((v_{z+1}-v_{z})\Lambda_{z}\bigr)^{2}+\sum\limits_{z,z'=k+1}^{j-1}
(v_{z+1}-v_{z})(v_{z'+1}-v_{z'})(\Lambda_{z}^2+\Lambda_{z'}^2)\\ 
&\ge(v_{l}-v_{j})^{2}\Lambda_{j+1}^2
+(v_{k+1}-v_{r})^{2}\Lambda_{j+1}^2+2(v_{l}-v_{j})(v_{k+1}-v_{r})\Lambda_{j+1}^2+\sum\limits_{k+1}^{j-1}
(v_{z+1}-v_{z})^{2}\Lambda_{j+1}^2\\
&+\sum\limits_{z,z'=k+1}^{j-1}
2(v_{z+1}-v_{z})(v_{z'+1}-v_{z'})\Lambda_{j+1}^2\\
&=(v_{l}-v_{r})^{2}\Lambda_{j+1}^2.
\end{align*}
similarly, we can also prove the other side of the inequality, thus
\begin{align*}
 (v_{l}-v_{r})^{2}\Lambda_{j+1}^2\le (u_{l}-u_{r})^2\le(v_{l}-v_{r})^{2}\Lambda_{K+1}^2,\\
 (v_{l}-v_{r})\Lambda_{j+1}\le|u_{l}-u_{r}|\le(v_{l}-v_{r})\Lambda_{k+1}.
\end{align*}
We omit the $S_{2}$ case, since it can be done in the similar fashion.
\end{proof}
Next lemma provides a quantitative estimates between the rarefaction and shock curves of the system \eqref{appr-p}.
\begin{lemma}\label{shock-rarefaction}
Let $S_i$ and $R_{i}$ be shock and rarefaction curves, respectively. Assume that $S_{i}$ and $R_{i}$ starts from left state $(v_{l}, u_{l})$. Assume that $(v_{r}, u_{r})$ and $(v_{r}, \tilde{u}_{r})$ are right states for $S_{i}$, and $R_{i}$ respectively, such that $v_{l}\in[v_{j},v_{j+1}]$ and $v_{r}\in[v_{k},v_{k+1}]$. Then
\begin{align*}
    |(u_{l}-u_{r})-(u_{l}-\tilde{u}_{r})|\le \frac{(v_{l}-v_{r})^{2}\left(\Lambda_{j}-\Lambda_{k}\right)^{2}}{(v_{l}-v_{j})\Lambda_{j}
+(v_{k+1}-v_{r})\Lambda_{k}+\sum\limits_{k+1}^{j-1}(v_{z+1}-v_{z})\Lambda_{z}}
\end{align*}
\end{lemma}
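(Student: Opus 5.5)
We will treat $S_1$ and $R_1$; the $2$-family case is symmetric. The plan is a difference-of-squares argument, combined with the lower bound from Lemma~\ref{shock-bound}.

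The quantity to estimate is $|\tilde u_r-u_r|$. Write
\[
a:=|u_l-u_r|=\sqrt{(v_l-v_r)\bigl(p_n(v_r)-p_n(v_l)\bigr)}
\]
for the shock jump, and let $b$ be the rarefaction increment. By \eqref{R1}, $b$ is the integral of $\sqrt{-p_n'}$ over $[v_r,v_l]$:
\[
b=(v_l-v_j)\Lambda_j+(v_{k+1}-v_r)\Lambda_k+\sum_{z=k+1}^{j-1}(v_{z+1}-v_z)\Lambda_z .
\]
This is exactly the denominator in the claimed estimate. Lemma~\ref{shock-bound}(1a) gives $a\ge b\ge 0$, so $u_l-u_r$ and $u_l-\tilde u_r$ are comparable in the right sense. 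We then have
\[
|a-b|=\frac{a^2-b^2}{a+b}\le\frac{a^2-b^2}{b},
\]
and it remains to bound $a^2-b^2$ by $(v_l-v_r)^2(\Lambda_j-\Lambda_k)^2$.

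For the numerator, set $\delta_z$ to be the length of the $z$-th piece of $[v_r,v_l]$: $\delta_j=v_l-v_j$, $\delta_k=v_{k+1}-v_r$, and $\delta_z=v_{z+1}-v_z$ otherwise. Then $\sum_z\delta_z=v_l-v_r$. The expansion already done in the proof of Lemma~\ref{shock-bound} gives
\[
a^2=\Bigl(\sum_z\delta_z\Bigr)\Bigl(\sum_z\delta_z\Lambda_z^2\Bigr)
\quad\text{and}\quad
b^2=\Bigl(\sum_z\delta_z\Lambda_z\Bigr)^2 .
\]
Hence, by the Lagrange identity,
\[
a^2-b^2=\sum_{z<z'}\delta_z\delta_{z'}(\Lambda_z-\Lambda_{z'})^2 .
\]
The $\Lambda_z$ are monotone in $z$ because $p$ is convex, so every $|\Lambda_z-\Lambda_{z'}|$ with $k\le z,z'\le j$ is at most $|\Lambda_j-\Lambda_k|$. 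This yields
\[
a^2-b^2\le(\Lambda_j-\Lambda_k)^2\sum_{z<z'}\delta_z\delta_{z'}\le\tfrac12(v_l-v_r)^2(\Lambda_j-\Lambda_k)^2 .
\]
Combining with the previous paragraph gives the stated inequality, in fact with an extra factor $\tfrac12$.

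The main obstacle is bookkeeping rather than any conceptual difficulty. First, the $R_1$ curve emanating from $(v_l,u_l)$ is only admissible for $v>v_l$, whereas the shock requires $v_r<v_l$. So I will interpret $\tilde u_r$ as the value at $v_r$ of the integral (Lax) curve of \eqref{R1} continued through $(v_l,u_l)$. This is the natural reading, and the one used later in interaction estimates. Second, the endpoint pieces must be handled carefully when $j=k$: there is then only one piece, both sides vanish, and the claim is trivial. A nondegenerate denominator needs $b>0$, which holds whenever $v_l\ne v_r$.
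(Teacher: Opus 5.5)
Your proof is correct and takes essentially the same route as the paper. With $A=u_l-u_r$ and $B=u_l-\tilde u_r$, it writes $A^2-B^2$ as $\sum_{z<z'}\delta_z\delta_{z'}(\Lambda_z-\Lambda_{z'})^2$, bounds each difference by $(\Lambda_j-\Lambda_k)^2$ using the monotonicity of $\Lambda_z$, and divides by $A+B\ge B$; your Lagrange-identity packaging is a cleaner version of the paper's term-by-term expansion (and yields the extra factor $\tfrac12$), and your reading of $\tilde u_r$ via the continued integral curve matches the formula for $B^2$ used implicitly in the paper.
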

\begin{proof}
We prove that lemma for $1$-family curves, and the $2$-family case can be proved in the similar way. Let us define $A:=u_{l}-u_{r}$ and $B:=u_{l}-\tilde{u}_{r}$. From the definition of $S_{1}$ and $R_{1}$ and Lemma \ref{shock-bound}, we know that $A>B>0$ and 
\begin{align*}
    A^{2}&= \bigl((v_{l}-v_{j})\Lambda_{j}\bigr)^{2}
+\bigl((v_{k+1}-v_{r})\Lambda_{k}\bigr)^{2}+(v_{l}-v_{j})(v_{k+1}-v_{r})\left(\Lambda_{j}^{2}+\Lambda_{k}^{2}\right)+\sum\limits_{k+1}^{j-1}
\bigl((v_{z+1}-v_{z})\Lambda_{z}\bigr)^{2}\\
&+\sum\limits_{k+1}^{j-1}
\bigl((v_{l}-v_{j})(v_{z+1}-v_{z})\left(\Lambda_{j}^{2}+\Lambda_{z}^{2}\right)+\sum\limits_{k+1}^{j-1}
\bigl((v_{k+1}-v_{r})(v_{z+1}-v_{z})\left(\Lambda_{k}^{2}+\Lambda_{z}^{2}\right)\\
&+\sum\limits_{z,z'=k+1}^{j-1}
(v_{z+1}-v_{z})(v_{z'+1}-v_{z'})(\Lambda_{z}^2+\Lambda_{z'}^2)
\end{align*}
\begin{align*}
    B^{2}&= \bigl((v_{l}-v_{j})\Lambda_{j}\bigr)^{2}
+\bigl((v_{k+1}-v_{r})\Lambda_{k}\bigr)^{2}+2(v_{l}-v_{j})(v_{k+1}-v_{r})\Lambda_{j}\Lambda_{k}+\sum\limits_{k+1}^{j-1}
\bigl((v_{z+1}-v_{z})\Lambda_{z}\bigr)^{2}\\
&+2\sum\limits_{k+1}^{j-1}
\bigl((v_{l}-v_{j})(v_{z+1}-v_{z})\Lambda_{j}\Lambda_{z}+2\sum\limits_{k+1}^{j-1}
\bigl((v_{k+1}-v_{r})(v_{z+1}-v_{z})\Lambda_{k}\Lambda_{z}\\
&+2\sum\limits_{z,z'=k+1}^{j-1}
(v_{z+1}-v_{z})(v_{z'+1}-v_{z'})\Lambda_{z}\Lambda_{z'}
\end{align*}
now let us consider 
\begin{align*}
    A^2-B^2&=(v_{l}-v_{j})(v_{k+1}-v_{r})\left(\Lambda_{j}-\Lambda_{k}\right)^{2}+\sum\limits_{k+1}^{j-1}
\bigl((v_{l}-v_{j})(v_{z+1}-v_{z})\left(\Lambda_{j}-\Lambda_{z}\right)^{2}\\
&+\sum\limits_{k+1}^{j-1}\bigl((v_{k+1}-v_{r})(v_{z+1}-v_{z})\left(\Lambda_{k}-\Lambda_{z}\right)^{2}
+\sum\limits_{s,t=k+1}^{j-1}
(v_{z+1}-v_{z})(v_{z'+1}-v_{z'})\left(\Lambda_{z}-\Lambda_{z'}\right)^{2}\\
&\le\left( (v_{l}-v_{j})(v_{k+1}-v_{r})+\sum\limits_{k+1}^{j-1}
(v_{l}-v_{j})(v_{z+1}-v_{z})+\sum\limits_{k+1}^{j-1}(v_{k+1}-v_{r})(v_{z+1}-v_{z})\right)\left(\Lambda_{j}-\Lambda_{k}\right)^{2}\\&+\sum\limits_{s,t=k+1}^{j-1}
(v_{z+1}-v_{z})(v_{z'+1}-v_{z'})\left(\Lambda_{j}-\Lambda_{k}\right)^{2}\\
&\le(v_{l}-v_{r})^{2}\left(\Lambda_{j}-\Lambda_{k}\right)^{2}.
\end{align*}
Since $A+B>2B$, therefore,
\begin{align*}
    A-B\le \frac{(v_{l}-v_{r})^{2}\left(\Lambda_{j}-\Lambda_{k}\right)^{2}}{(v_{l}-v_{j})\Lambda_{j}
+(v_{k+1}-v_{r})\Lambda_{k}+\sum\limits_{k+1}^{j-1}(v_{z+1}-v_{z})\Lambda_{z}}.
\end{align*}
Hence we are done.
\end{proof}
Next lemma gives us the quantitative estimate between the shock curves starting from the same point and ends on different points.
\begin{lemma}\label{shock-shock}
Let $S_{i}, \tilde{S}_{i}$ and $\hat{S}_{i}$ be three shock curves. Assume that $S_{i}, \tilde{S}_{i}$ starts from left state $(v_{l}, u_{l})$ and $\hat{S}_{i}$ starts from left state $(\tilde{v}_{r}, \tilde{u}_{r})$  and $S_{i}, \tilde{S}_{i}$ and $\hat{S}_{i}$ ends at right states $(v_{r}, u_{r})$, $(\tilde{v}_{r}, \tilde{u}_{r})$, and $(v_{r}, \hat{u}_{r})$, respectively, with $v_{l}\in[v_{j},v_{j+1}]$, $\tilde{v}_{r}\in[v_{i},v_{i+1}]$ and $v_{r}\in[v_{k},v_{k+1}]$ such that $v_{l}>\tilde{v}_{r}>v_{r}$. Then,
\begin{align*}
     |u_{r}-\hat{u}_{r}|\le\frac{(v_{l}-\tilde{v}_{r})(\tilde{v}_{r}-v_{r})\left(\Lambda_{k}-\Lambda_{j}\right)^{2}}{2\left((v_{l}-v_{j})\Lambda_{j}
+(v_{k+1}-v_{r})\Lambda_{k}+\sum\limits_{k+1}^{j-1}(v_{z+1}-v_{z})\Lambda_{z}\right)}
\end{align*}
\end{lemma}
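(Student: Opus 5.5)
We work with the $1$-family case, $v_l>\tilde v_r>v_r$; the $2$-family case is symmetric. Write $a:=v_l-\tilde v_r$ and $b:=\tilde v_r-v_r$, so that $a+b=v_l-v_r$. Write $P_1:=p_n(\tilde v_r)-p_n(v_l)$ and $P_2:=p_n(v_r)-p_n(\tilde v_r)$, so that $P_1+P_2=p_n(v_r)-p_n(v_l)$. By \eqref{S1},
\[
u_l-u_r=\sqrt{(a+b)(P_1+P_2)},\qquad u_l-\hat u_r=\sqrt{aP_1}+\sqrt{bP_2}.
\]
The second identity holds because $\hat u_r$ is reached by composing the shock $\tilde S_1$ with the shock $\hat S_1$. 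Set $A:=u_l-u_r$ and $B:=u_l-\hat u_r$, so that $|u_r-\hat u_r|=|A-B|$. The plan mirrors the proof of Lemma \ref{shock-rarefaction}: compute $A^2-B^2$ exactly, bound it above, and divide by $A+B$.

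\textbf{Step 1 (the difference of squares).} Expanding gives
\[
A^2-B^2=aP_2+bP_1-2\sqrt{abP_1P_2}=\bigl(\sqrt{aP_2}-\sqrt{bP_1}\bigr)^2\ge0.
\]
This is a Cauchy--Schwarz identity. Introduce the mean slopes $\sigma_1^2:=P_1/a$ and $\sigma_2^2:=P_2/b$; these are averages of $\Lambda_z^2$ over $[\tilde v_r,v_l]$ and $[v_r,\tilde v_r]$ respectively, weighted by lengths. Then
\[
A^2-B^2=ab(\sigma_2-\sigma_1)^2 .
\]

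\textbf{Step 2 (bounding the slopes).} Since $p_n$ is convex with monotone slopes, both $\sigma_1$ and $\sigma_2$ lie between $\Lambda_j$ and $\Lambda_k$. This is the same monotonicity $\Lambda_{j}\le\Lambda_z\le\Lambda_k$ used in Lemma \ref{shock-bound}(b). Hence $(\sigma_2-\sigma_1)^2\le(\Lambda_k-\Lambda_j)^2$, and therefore
\[
A^2-B^2\le ab(\Lambda_k-\Lambda_j)^2 .
\]

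\textbf{Step 3 (the denominator).} Write $A-B=(A^2-B^2)/(A+B)$ and use $A+B\ge 2B$. By Step 1, $A\ge B$, so it suffices to bound $B$ from below. The composed path through $\tilde v_r$ is two shocks, and applying Lemma \ref{shock-bound}(a) to each of them (or Cauchy--Schwarz) gives
\[
B=\sqrt{aP_1}+\sqrt{bP_2}\ge\int_{v_r}^{v_l}\sqrt{-p_n'}\,\dy y
=(v_l-v_j)\Lambda_j+(v_{k+1}-v_r)\Lambda_k+\sum_{k+1}^{j-1}(v_{z+1}-v_z)\Lambda_z .
\]
Combining this with Step 2 yields exactly the factor $\tfrac12$ and the stated denominator.

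\textbf{Main obstacle.} The delicate points are bookkeeping rather than ideas. First, the endpoint intervals in the Lipschitz lower bound of Step 3 must be written in the paper's notation; in particular, when $\tilde v_r$ falls inside a grid interval, that interval splits into two partial pieces whose sum recombines correctly. Second, the slope bound in Step 2 must hold even when $j=k$ or $\tilde v_r$ shares a grid interval with $v_l$ or $v_r$. In that degenerate case both sides vanish or the inequality is trivial, and I would check it separately first.
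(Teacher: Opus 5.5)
Your proof is correct, and its skeleton matches the paper's. The paper also sets $A=u_l-u_r$, $B=u_l-\tilde u_r$, $C=\tilde u_r-\hat u_r$ (your $B$ is its $B+C$), bounds $A^2-(B+C)^2$ by $(v_l-\tilde v_r)(\tilde v_r-v_r)(\Lambda_k-\Lambda_j)^2$, and divides by a lower bound for $A+B+C$.

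The difference is in how the numerator is handled.
The paper expands $A^2-B^2-C^2=aP_2+bP_1$ cell by cell over the grid. It then bounds $2BC$ from below by multiplying the two lower bounds of Lemma~\ref{shock-bound}(a) and compares term by term. This gives a weighted double sum of $(\Lambda_z-\Lambda_{z'})^2$, which in integral form is $\iint(\sqrt{-p_n'(y)}-\sqrt{-p_n'(y')})^2$ over $[\tilde v_r,v_l]\times[v_r,\tilde v_r]$. Each term is then bounded by $(\Lambda_k-\Lambda_j)^2$.
Your perfect-square identity $A^2-B^2=ab(\sigma_2-\sigma_1)^2$ is exact and is never larger than that double sum, since mean slopes are at most root-mean-square slopes. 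It uses convexity of $p_n$ only once. As a result there is no index bookkeeping, and the cases where $\tilde v_r$ shares a cell with $v_l$ or $v_r$ need no separate treatment.

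Your denominator step is also the right one. The paper justifies the factor $2$ by writing $A+B+C>2A$, which points the wrong way given $A\ge B+C$. What is actually needed is your chain $A+(B+C)\ge 2(B+C)\ge 2\int_{v_r}^{v_l}\sqrt{-p_n'(y)}\,dy$.
Proving the last inequality by Cauchy--Schwarz is preferable to applying Lemma~\ref{shock-bound}(a) to each piece, because that lemma's formula overcounts when both endpoints of a shock lie in one cell. The only case where the stated denominator differs from this integral is $j=k$, and there, as you say, both sides vanish.
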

\begin{proof}
We prove that lemma for $1$-family curves, and the $2$-family case can be proved in the similar way. Let us define $A:=u_{l}-u_{r}$, $B:=u_{l}-\tilde{u}_{r}$ and $C:=\tilde{u}_{r}-\hat{u}_{r}$. From the definition of $S_{1}$ and Lemma \ref{shock-bound}, we know that $A>B+C>0$ and   $A^{2}=(v_{l}-v_{r})(p_{n}(v_{r})-p_{n}(v_{l})),
    B^{2}=(v_{l}-\tilde{v}_{r})(p_{n}(\tilde{v}_{r})-p_{n}(v_{l}))$, and $C^{2}=(\tilde{v}_{r}-v_{r})(p_{n}(v_{r})-p_{n}(\tilde{v}_{r}))$
\begin{align*}
  A^{2}-B^{2}-C^{2}=(v_{l}-\tilde{v}_{r})(p_{n}(v_{r})-p_{n}(\tilde{v}_{r}))+(\tilde{v}_{r}-v_{r})(p_{n}(\tilde{v}_{r})-p_{n}(v_{l}))
\end{align*}
 we can rewrite this as follows,
 \begin{align*}
    &A^{2}-B^{2}-C^{2}\\
    &=(v_{l}-v_{j})(\tilde{v}_{r}-v_{i})\left(\Lambda_{i}^{2}+\Lambda_{j}^{2}\right)+(v_{l}-v_{j})(v_{k+1}-v_{r})\left(\Lambda_{k}^{2}+\Lambda_{j}^{2}\right)+\sum\limits_{k+1}^{i-1}(v_{l}-v_{j})(v_{z+1}-v_{z})\left(\Lambda_{z}^{2}+\Lambda_{j}^{2}\right)\\
&+2(v_{i+1}-\tilde{v}_{r})(\tilde{v}_{r}-v_{i})\Lambda_{i}^{2}+(v_{i+1}-\tilde{v}_{r})(v_{k+1}-v_{r})\left(\Lambda_{i}^{2}+\Lambda_{k}^{2}\right)+\sum\limits_{k+1}^{i-1}(v_{i+1}-\tilde{v}_{r})(v_{z+1}-v_{z})\left(\Lambda_{i}^{2}+\Lambda_{z}^{2}\right)\\
&+\sum\limits_{i+1}^{j-1}(\tilde{v}_{r}-v_{i})(v_{z+1}-v_{z})\left(\Lambda_{i}^{2}+\Lambda_{z}^{2}\right)+\sum\limits_{i+1}^{j-1}(v_{k+1}-v_{r})(v_{z+1}-v_{z})\left(\Lambda_{z}^{2}+\Lambda_{k}^{2}\right)\\
&+\sum\limits_{i+1}^{j-1}\sum\limits_{k+1}^{i-1}(v_{z'+1}-v_{z'})(v_{z+1}-v_{z})\left(\Lambda_{z'}^{2}+\Lambda_{z}^{2}\right)
 \end{align*}
by using the definition of $B$, $C$, and Lemma \ref{shock-bound}, we know that
\begin{align*}
    2BC&\ge2\left((v_{l}-v_{j})\Lambda_{j}
+(v_{i+1}-\tilde{v}_{r})\Lambda_{i}+\sum\limits_{i+1}^{j-1}(v_{z+1}-v_{z})\Lambda_{z}\right)\\
&\hspace{60mm}\cdot\left((\tilde{v}_{r}-v_{i})\Lambda_{i}
+(v_{k+1}-v_{r})\Lambda_{k}+\sum\limits_{k+1}^{i-1}(v_{z+1}-v_{z})\Lambda_{z}\right)\\
&=2(v_{l}-v_{j})(\tilde{v}_{r}-v_{i})\Lambda_{i}\Lambda_{j}+2(v_{l}-v_{j})(v_{k+1}-v_{r})\Lambda_{k}\Lambda_{j}+2\sum\limits_{k+1}^{i-1}(v_{l}-v_{j})(v_{z+1}-v_{z})\Lambda_{j}\Lambda_{z}\\
&+2(v_{i+1}-\tilde{v}_{r})(\tilde{v}_{r}-v_{i})\Lambda_{i}^{2}+2(v_{i+1}-\tilde{v}_{r})(v_{k+1}-v_{r})\Lambda_{k}\Lambda_{i}+2\sum\limits_{k+1}^{i-1}(v_{i+1}-\tilde{v}_{r})(v_{z+1}-v_{z})\Lambda_{i}\Lambda_{z}\\
&+2\sum\limits_{i+1}^{j-1}(\tilde{v}_{r}-v_{i})(v_{z+1}-v_{z})\Lambda_{i}\Lambda_{z}+2\sum\limits_{i+1}^{j-1}(v_{k+1}-v_{r})(v_{z+1}-v_{z})\Lambda_{k}\Lambda_{z}\\
&+2\sum\limits_{i+1}^{j-1}\sum\limits_{k+1}^{i-1}(v_{z'+1}-v_{z'})(v_{z+1}-v_{z})\Lambda_{j}\Lambda_{z}
\end{align*}
hence,
\begin{align*}
    &A^{2}-(B+C)^2\\
    &\le(v_{l}-v_{j})(\tilde{v}_{r}-v_{i})\left(\Lambda_{i}-\Lambda_{j}\right)^{2}+(v_{l}-v_{j})(v_{k+1}-v_{r})\left(\Lambda_{k}-\Lambda_{j}\right)^{2}+\sum\limits_{k+1}^{i-1}(v_{l}-v_{j})(v_{z+1}-v_{z})\left(\Lambda_{z}-\Lambda_{j}\right)^{2}\\
&\scalebox{0.93}{$+(v_{i+1}-\tilde{v}_{r})(v_{k+1}-v_{r})\left(\Lambda_{i}-\Lambda_{k}\right)^{2}+\sum\limits_{k+1}^{i-1}(v_{i+1}-\tilde{v}_{r})(v_{z+1}-v_{z})\left(\Lambda_{i}-\Lambda_{z}\right)^{2}
+\sum\limits_{i+1}^{j-1}(\tilde{v}_{r}-v_{i})(v_{z+1}-v_{z})\left(\Lambda_{i}-\Lambda_{z}\right)^{2}$}\\
&+\sum\limits_{i+1}^{j-1}(v_{k+1}-v_{r})(v_{z+1}-v_{z})\left(\Lambda_{z}-\Lambda_{k}\right)^{2}
+\sum\limits_{i+1}^{j-1}\sum\limits_{k+1}^{i-1}(v_{z'+1}-v_{z'})(v_{z+1}-v_{z})\left(\Lambda_{z'}-\Lambda_{z}\right)^{2}\\
&\le\Bigl((v_{l}-v_{j})(\tilde{v}_{r}-v_{i})+(v_{l}-v_{j})(v_{k+1}-v_{r})+\sum\limits_{k+1}^{i-1}(v_{l}-v_{j})(v_{z+1}-v_{z})+(v_{i+1}-\tilde{v}_{r})(v_{k+1}-v_{r})\\
&+\sum\limits_{k+1}^{i-1}(v_{i+1}-\tilde{v}_{r})(v_{z+1}-v_{z})
+\sum\limits_{i+1}^{j-1}(\tilde{v}_{r}-v_{i})(v_{z+1}-v_{z})
+\sum\limits_{i+1}^{j-1}(v_{k+1}-v_{r})(v_{z+1}-v_{z})\\
&+\sum\limits_{i+1}^{j-1}\sum\limits_{k+1}^{i-1}(v_{z'+1}-v_{z'})(v_{z+1}-v_{z})\Bigr)\left(\Lambda_{k}-\Lambda_{j}\right)^{2}\\
&\le(v_{l}-\tilde{v}_{r})(\tilde{v}_{r}-v_{r})\left(\Lambda_{k}-\Lambda_{j}\right)^{2}.
\end{align*}
We know that $A+B+C>2A\ge2\left((v_{l}-v_{j})\Lambda_{j}
+(v_{k+1}-v_{r})\Lambda_{k}+\sum\limits_{k+1}^{j-1}(v_{z+1}-v_{z})\Lambda_{z}\right)$. This gives us,
\begin{align*}
    A-(B+C)\le\frac{(v_{l}-\tilde{v}_{r})(\tilde{v}_{r}-v_{r})\left(\Lambda_{k}-\Lambda_{j}\right)^{2}}{2\left((v_{l}-v_{j})\Lambda_{j}
+(v_{k+1}-v_{r})\Lambda_{k}+\sum\limits_{k+1}^{j-1}(v_{z+1}-v_{z})\Lambda_{z}\right)}
\end{align*}
\end{proof}
Now we define the strength for any front. We consider a front connecting states $U_{l}=(v_{l},u_{l})$ to $U_{r}=(v_{r},u_{r})$, then we define the wave strength,
\begin{equation}
  |\mu|=|u_{l}-u_{r}|.  
\end{equation}
Moreover, from \eqref{S1}-\eqref{R2}, we can see that $|u_{l}-u_{r}|=\mathcal{O}\left(|v_{l}-v_{r}|\right)$, provided $U_{l}, U_{r}$ are in some fixed compact set $K\subset\R^{2}$.
\begin{lemma}\label{lemmamtv}
Let $U(x,t)=(v(x,t), u(x,t))$ be a function of bounded variation. Define a functional $M(t)=\sum\limits_{k}|\mu_{k}|$, sum of all wave strength at time $t$, then
\begin{eqnarray}\label{mtv}
C_{1}M(t)\le TV(U(\cdot, t))\le C_{2}M(t)
\end{eqnarray}
where $C_{i}$, i=1, 2, are the positive constant depending only on the compact set $K$. 
\end{lemma}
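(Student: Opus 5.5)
The plan is to compare, at a fixed time $t$, the total variation of the piecewise constant function $U(\cdot,t)$ with the sum of the front strengths $|\mu_k|=|u_{l,k}-u_{r,k}|$. For a piecewise constant function, the total variation of the pair $(v,u)$ is computed jump by jump. I will use the $\ell^1$ norm on $\mathbb{R}^2$, since all norms are equivalent. This gives
\[
TV(U(\cdot,t))=\sum_k\bigl(|v_{r,k}-v_{l,k}|+|u_{r,k}-u_{l,k}|\bigr).
\]
It therefore suffices to prove a two-sided bound for each single front,
\[
c_1|\mu_k|\le |v_{r,k}-v_{l,k}|+|u_{r,k}-u_{l,k}|\le c_2|\mu_k|,
\]
with constants depending only on $K$, and then sum over $k$. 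For a general $BV$ function I would use approximation by such piecewise constant front-tracking states; the paper only applies the lemma to these states.

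The left inequality $C_1M\le TV$ is trivial with $C_1=1$, because $|\mu_k|=|u_{l,k}-u_{r,k}|$ is one of the two summands.

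For the right inequality I need $|v_{r,k}-v_{l,k}|\le C|u_{r,k}-u_{l,k}|$ for each kind of front.
\begin{itemize}
\item \emph{Shock fronts.} Lemma~\ref{shock-bound}\,(b) gives $|u_l-u_r|\ge \Lambda_{j+1}|v_l-v_r|$ (and the analogous bound for $S_2$). Since $K$ is a compact set with $v$ bounded away from $0$ and $\infty$, the slopes satisfy $\Lambda_z\ge \underline{\Lambda}:=\min_{v\in K}\sqrt{-p'(v)}\,(1-o(1))>0$ uniformly in $n$. This follows because $\Lambda_z^2$ is a difference quotient of $p$, hence equals $-p'(\xi)$ for some $\xi$ in the grid interval.
\item \emph{Rarefaction fronts.} By \eqref{R1}--\eqref{R2}, $|u_l-u_r|=\Lambda_i|v_l-v_r|\ge\underline{\Lambda}|v_l-v_r|$.
\item \emph{Contact discontinuities.} These lie inside one grid interval and are handled in the same way as rarefactions.
\end{itemize}
Hence in all three cases $|v_l-v_r|+|u_l-u_r|\le (1+\underline{\Lambda}^{-1})|\mu|$.

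The main obstacle is the \emph{nonphysical fronts}. For them $v_l=v_r$ and the strength is exactly $|\hat u_r-u_r|$, so the per-front bound holds trivially, with equality between $TV$-contribution and strength. The only genuine care needed elsewhere is the uniformity of $\underline{\Lambda}$ in $n$, which rests on the invariance of $K$ established via the uniform BV bound. Setting $C_2=1+\underline{\Lambda}^{-1}$ and summing the per-front inequalities over all fronts gives \eqref{mtv}.
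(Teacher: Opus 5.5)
Your proof is correct and follows the same route as the paper. Like the paper, you compare the $v$- and $u$-increments jump by jump, using Lemma~\ref{shock-bound}(b) for shocks, the exact slope $\Lambda_i$ for rarefactions and contacts, and the uniform positive lower bound on $\Lambda_z$ over $K$. Your observation that $C_1=1$ is immediate, and that only the one-sided bound $|v_l-v_r|\le C|u_l-u_r|$ is needed, is a slight sharpening of the paper's argument. The paper claims the two-sided per-front bound $|u_k-u_{k-1}|\le C|v_k-v_{k-1}|$, which fails for nonphysical fronts (where $v_l=v_r$); your version handles those fronts correctly.
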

\begin{proof}
Total variation of $U$ is given by,
\begin{align}
\TV\bigl(U(\cdot,t)\bigr)
&= \sum_{k} \abs{u_k - u_{k-1}}
  + \sum_{k} \abs{v_k - v_{k-1}} .
\end{align}
Therefore to prove the result it is enough to show that
\begin{align}\label{vvu}
\tilde{C} \abs{v_k - v_{k-1}}
\le \abs{u_k - u_{k-1}}
\le C \abs{v_k - v_{k-1}},
\end{align}
where $\tilde{C}, C$ depends only on compact set $K$.
We will show it for 1-family shock curve and other cases can be shown similar fashion. Consider 1-family shock curve connecting states $U_{l}=(v_{l},u_{l})$ to $U_{r}=(v_{r},u_{r})$, $v_{l}>v_{r}$ then
\begin{eqnarray*}
|u_{l}-u_{r}|=\sqrt{(v_{l}-v_{r})(p_{n}(v_{r})-p_{n}(v_{l})},
\end{eqnarray*}
from Lemma \ref{shock-bound} we can use the expanded shock expression and by using the fact $\Lambda_{z}$ are bound above and below on a compact set, therefore item \ref{Lambda_bound} gives us the \eqref{vvu}. Thus we are done.
\end{proof}

Now, in next subsections  we will present the interaction estimates required for obtain the uniform BV bound for the approximated weak entropy solutions.
\subsection{Different family interaction}
We consider three states \((v_l,u_l)\), \((v_m,u_m)\), and \((v_r,u_r)\). Assume that
\((v_l,u_l)\) and \((v_m,u_m)\) are connected by a \(2\)-family front, while
\((v_m,u_m)\) and \((v_r,u_r)\) are connected by a \(1\)-family front. After the
interaction, the outgoing \(1\)-family front connects \((v_l,u_l)\) to
\((v_m^*,u_m^*)\), and the outgoing \(2\)-family curve connects
\((v_m^*,u_m^*)\) to \((v_r,u_r)\). We denote the strengths of the incoming waves by
\[
|\mu_2| = |u_r-u_m|, \qquad |\mu_1| = |u_l-u_m|,
\]
and the strengths of the outgoing waves by
\[
|\widetilde{\mu}_1| = |u_l-u_m^*|, \qquad
|\widetilde{\mu}_2| = |u_m^*-u_r|.
\] 
\begin{itemize}
     \item[1.] {\bf ${\bf R_{2}+R_{1}}$ interaction:} For any generic front interaction $R_{2}$ and $R_{1}$.
     \[ |\mu_{1}|-|\tilde{\mu}_{1}|=0, \qquad \text{ and }\qquad
         |\mu_{2}|-|\tilde{\mu}_{2}|=0\]

\begin{center}
 
\begin{tikzpicture}[scale=1]

% Vertical solid lines
\draw[thick] (-3,1.25) -- (-3,5);   % v_{i-1}
\draw[thick] (0,1.25) -- (0,5);     % v_i
\draw[thick] (3,1.25) -- (3,5);     % v_{i+1}

% Dashed vertical lines
\draw[dashed, thick] (-1.5,1.25) -- (-1.5,5);   % v_m
\draw[dashed, thick] (2.1,1.25) -- (2.1,5);     % v_m^*

% Labels at top (dashed lines)
\node at (-1.5,5.2) {\scriptsize $v_m$};
\node at (2.1,5.2) {\scriptsize $v_m^\ast$};

% Bottom labels
\node at (-3,1) {\scriptsize $v_{i-1}$};
\node at (0,1) {\scriptsize $v_i$};
\node at (3,1) {\scriptsize $v_{i+1}$};
\node at (-1,2.5) {\scriptsize $R_{2}$};
\node at (-1,3.9) {\scriptsize $R_{1}$};
\node at (1.5,2.5) {\scriptsize $R_{1}$};
\node at (1.5,3.9) {\scriptsize $R_{2}$};
% Diamond interaction region
\coordinate (Top)    at (0,4.2);
\coordinate (Left)   at (-1.5,3.2);
\coordinate (Bottom) at (0,2.2);
\coordinate (Right)  at (2.1,3.2);

\draw[thick,red] (Top) -- (Left);
\draw[thick,blue] (Left)-- (Bottom);
\draw[thick, red](Bottom)-- (Right);
\draw[thick, blue](Right) -- (Top);
\end{tikzpicture}
\captionof{figure}{Illustration of interaction of rarefaction curves in $vu$-plane: $R_{1}, R_{2}$ are in red and blue, respectively.} \label{fig-1}
\end{center}
\item[2.]{\bf ${\bf S_{2}+S_{1}}$ interaction:} Assume $S_{2}$ connects $(v_{l}, u_{l})$ to $(v_{m}, u_{m})$,   $S_{1}$ connects $(v_{m}, u_{m})$ to $(v_{r}, u_{r})$ are interacting, where $v_{l}\in[v_{j-1}, v_{j}], v_{m}\in[v_{i-1},v_{i}]$. The outgoing $S_{1}$ connects $(v_{l}, u_{l})$ to $(v_{m}^{*}, u_{m}^{*})$, outgoing   $S_{2}$ connects $(v_{m}^{*}, u_{m}^{*})$ to $(v_{r}, u_{r})$, where $v_{r}\in[v_{k-1},v_{k}], v_{m}^{*}\in[v_{i-\alpha+1},v_{i-\alpha}]$ (see Fig \ref{Fig-shock-shock}). 
\begin{center}
    \begin{tikzpicture}[scale=.9]

% =====================================================
% Five solid vertical lines
% =====================================================
\foreach \x in {-11, -10,-7.7,-6,-3.5,-1,1,4}
  \draw[thick] (\x,0) -- (\x,5.5);
\foreach \x in {-10.5,-6.8,-2,1.8}
  \draw[dashed] (\x,0) -- (\x,5.5);

% =====================================================
% Labels
% =====================================================
\node at (-11.15,-0.25) {\scriptsize $v_{i-\alpha}$};
\node at (4,-0.25) {\scriptsize $v_{i}$};
\node at (1,-0.25) {\scriptsize $v_{i-1}$};
\node at (-1,-0.25) {\scriptsize $v_{j}$};
\node at (-3.5,-0.25) {\scriptsize $v_{j-1}$};
\node at (-6,-0.25) {\scriptsize $v_{k}$};
\node at (-7.7,-0.25) {\scriptsize $v_{k}$};
\node at (-2,-0.25) {\scriptsize $v_l$};
\node at (-10.45,-0.25) {\scriptsize $v_m^{*}$};
\node at (1.8,-0.25)  {\scriptsize $v_m$};
\node at (-6.8,-0.25)  {\scriptsize $v_r$};

% =====================================================
% Top S1 curve
% =====================================================
\coordinate (Tstart) at (-2,4.7);
\coordinate (Tend)   at (-10.5,2.3);

\draw[thick,blue]
  (Tstart)
  .. controls (-3.3,4.6) and (-6,4.4) ..
  (Tend)
  node[midway,above] {\scriptsize $S_1$};

% =====================================================
% Bottom S1 curve
% =====================================================
\coordinate (Bstart) at (1.8,3.8);
\coordinate (Bend)   at (-6.8,1.65);

\draw[thick,blue]
  (Bstart)
  .. controls (1.2,3.7) and (-4,3.5) ..
  (Bend)
  node[midway,below] {\scriptsize $S_1$};

% =====================================================
% Convex (almost straight) red connections between ends
% =====================================================

% Right ends: very slightly convex to the right
\draw[thick,red]
  (Tstart)
  .. controls (-1,4.2) and (1,3.9) ..
  (Bstart);

% Left ends: very slightly convex to the left
\draw[thick,red]
  (Tend)
  .. controls (-10.2,2.2) and (-8.9,1.75) ..
  (Bend);

% =====================================================
% Ellipsis
% =====================================================
\node at (0,3) {\Huge $\cdots$};
\node at (-4.6,4) {\Huge $\cdots$};
\node at (-8.8,4) {\Huge $\cdots$};

\end{tikzpicture}
\captionof{figure}{Illustration of interaction of shock curves in $vu$-plane: $S_{2}, S_{1}$ are in red and blue, respectively.} \label{Fig-shock-shock}
\end{center}

Consider a point $\bar{u}_{m}>u_{m}$ on line $v_{m}$ such that
\begin{align}\label{delta}
  \bar{u}_{m}-u_{l}=u_{l}-u_{m}=\sqrt{(v_{m}-v_{l})(p_{n}(v_{l})-p_{n}(v_{m}))}:=\delta 
\end{align}
Hence, we can connect $(v_{m}, \bar{u}_{m})$ to $(v_{l}, u_{l})$ by a 1-family shock curve, say, $S_{1}^{(1)}$. Next, we consider another  1-family shock curve, say, $S_{1}^{(2)}$ from $(v_{l}, u_{l})$ to $(v_{r}, \bar{u}_{r})$. By using the Lemma \ref{shock-shock}, we know
\begin{align}\label{uvsu}
    (u_{m}-u_{r})-(\bar{u}_{m}-\bar{u}_{r})\le C(v_{m}-v_{l})(v_{l}-v_{r})(v_{m}-v_{r}),
\end{align}
using Lemma \ref{shock-bound} and \eqref{delta}, we can write \eqref{uvsu},
\begin{align}\label{uvsu-delta}
    (u_{m}-u_{r})-(\bar{u}_{m}-\bar{u}_{r})\le C(u_{l}-u_{m})(u_{l}-\bar{u}_{r})(u_{m}-u_{r}):=C\delta_{1},
\end{align}
where $\delta_{1}=(u_{l}-u_{m})(u_{l}-\bar{u}_{r})(u_{m}-u_{r})$.
We draw a shock curve $S_{1}^{(3)}$ from $(v_{r},\bar{u}_{r})$ to some point $(v',u')$ such that $\bar{u}_{r}-u'=\bar{u}_{m}-u_{l}=\delta$. Next, we draw another 1-family shock curves, say, $S_{1}^{(4)}$ from point $(v_{l},u_{l})$ to $(v', u'')$, from Lemma \ref{shock-shock}, we get
\begin{align}
    (u_{l}-u'')-(u_{l}-u')=u'-u''\le C(u_{l}-u_{m})(u_{l}-\bar{u}_{r})(u_{m}-u_{r})=C\delta_{1},
\end{align}
by our choice of $S_{1}^{(i)}$ for $1\le i\le4$, we can see that $\bar{u}_{m}-\bar{u}_{r}=u_{l}-u'$.
Now we estimate $|(u_{l}-u'')-(u_{m}-u_{r})|$,
\begin{align}\label{ledelta}
  (u_{l}-u'')-(u_{m}-u_{r})&=\underbrace{(u_{l}-u'')-(\bar{u}_{m}-\bar{u}_{r})}_{\le C\delta_{1}}+\underbrace{(\bar{u}_{m}-\bar{u}_{r})-(u_{m}-u_{r})}_{\le0}\le C\delta_{1},\\\label{gedelta}
  (u_{l}-u'')-(u_{m}-u_{r})&=\underbrace{(u_{l}-u'')-(\bar{u}_{m}-\bar{u}_{r})}_{\ge0}+\underbrace{(\bar{u}_{m}-\bar{u}_{r})-(u_{m}-u_{r})}_{\ge-C\delta_{1}}\ge -C\delta_{1} ,
\end{align}
from \eqref{ledelta}-\eqref{gedelta}, we can conclude,
\begin{align}\label{u''-ur}
    \delta-C\delta_{1}\le u''-u_{r}\le\delta+C\delta_{1}.
\end{align}
Now, assume that if $u_{m}^{*}>u''$, then $v_{m}^{*}>v'$, which implies that $S_{2}$ start from $(v_{m}^{*},u_{m}^{*})$ to $(v_{r},u_{r})$ satisfies $u_{m}^{*}-u_{r}<\delta$, hence together with \eqref{u''-ur} we get, $0\le u_{m}^{*}-u''\le C\delta_{1}$. Similarly, if $u_{m}^{*}<u''$, we will get, $-C\delta_{1}\le u_{m}^{*}-u''\le 0$. We can estimate $|(u_{l}-u_{m}^{*})-(u_{m}-u_{r})|$,
\begin{align}\label{fin-est1}
  (u_{l}-u_{m}^{*})-(u_{m}-u_{r})&=\underbrace{(u_{l}-u'')-(u_{m}-u_{r})}_{\le C\delta_{1}}+\underbrace{(u''-u_{m}^{*})}_{\le C\delta_{1}}\le 2C\delta_{1},\\\label{fin-est2}
  (u_{l}-u_{m}^{*})-(u_{m}-u_{r})&=\underbrace{(u_{l}-u'')-(u_{m}-u_{r})}_{\ge -C\delta_{1}}+\underbrace{(u''-u_{m}^{*})}_{\ge -C\delta_{1}}\ge -2C\delta_{1} ,
\end{align}
hence, from \eqref{fin-est1}-\eqref{fin-est2} we get the estimate, 
\[ \left| \tilde{\mu}_{1}-\mu_{1}\right|\le C|\mu_{1}||\mu_{2}|(|\mu_{1}|+|\mu_{2}|), \qquad\text{ and } \qquad
     \left| \tilde{\mu}_{2}-\mu_{2}\right|\le C|\mu_{1}||\mu_{2}|(|\mu_{1}|+|\mu_{2}|).\]

\item[3.]{\bf ${\bf R_{2}+S_{1}/S_{2}+R_{1}}$ interaction:}  Let us assume $R_{2}$ connects $(v_{l}, u_{l})$ to $(v_{m}, u_{m})$,   $S_{1}$ connects $(v_{m}, u_{m})$ to $(v_{r}, u_{r})$ are interacting, where $v_{l},v_{m}\in[v_{i-1},v_{i}]$. The outgoing $S_{1}$ connects $(v_{l}, u_{l})$ to $(v_{m}^{*}, u_{m}^{*})$, outgoing   $R_{2}$ connects $(v_{m}^{*}, u_{m}^{*})$ to $(v_{r}, u_{r})$, where $v_{r}, v_{m}^{*}\in[v_{i-\alpha},v_{i-\alpha+2}]$ (see Fig \ref{fig-2}). Consider a shock curve $\tilde{S}_{1}$ starts from $(v_{l}, u_{l})$ to $(v_{r}, \bar{u}_{r})$. If we choose $\tilde{u}_{l}>u_{m}$ such that $\tilde{u}_{l}-u_{m}=u_{m}-u_{l}=(v_{l}-v_{m})\sqrt{-p'(v_{i-1}^{n})}$, then from Lemma \ref{shock-bound}, we know that $u_{l}-\bar{u}_{r}\ge\tilde{u}_{l}-u_{r}$ and Lemma \ref{shock-rarefaction} gives us
\begin{align*}
    (u_{l}-\bar{u}_{r})-(\tilde{u}_{l}-u_{r})\le C(v_{l}-v_{m})(v_{m}-v_{r})(v_{l}-v_{r}),
\end{align*}
again by Lemma \ref{shock-bound}, we have $u_{m}^{*}-\bar{u}_{r}\ge(v_{l}-v_{m})\sqrt{-p'(v_{i-1}^{n})}=\tilde{u}_{l}-u_{m}=u_{m}-u_{l}$. Thus, we get,
\begin{align}\label{r2s1}
    (u_{l}-u_{m}^{*})-(u_{m}-u_{r})\le C(v_{l}-v_{m})(v_{m}-v_{r})(v_{l}-v_{r}).
\end{align}
Now \eqref{r2s1} and \eqref{vvu}, gives the estimate
% We denote $|\tilde{\mu}_{1}|=|u_{l}-u_{m}^{*}|$, and $|\tilde{\mu}_{2}|=|u_{m}^{*}-u_{r}|$ are the wave strength of outgoing $S_{1}$ and $R_{2}$, respectively. By using the Lemma \ref{shock-rarefaction}, we can estimate, 
\[ \left| \tilde{\mu}_{1}-\mu_{1}\right|\le C|\mu_{1}||\mu_{2}|(|\mu_{1}|+|\mu_{2}|), \qquad\text{ and } \qquad
     \left| \tilde{\mu}_{2}-\mu_{2}\right|\le C|\mu_{1}||\mu_{2}|(|\mu_{1}|+|\mu_{2}|).\]
\begin{center}
\begin{tikzpicture}[scale=1]

% =====================================================
% Five solid vertical lines
% =====================================================
\foreach \x in {-6.2,-4.8,-3,0,2.5}
  \draw[thick] (\x,1) -- (\x,5);
  \foreach \x in {-5.2,-4.5,0.5,2.2}
  \draw[dashed] (\x,1) -- (\x,5);

% =====================================================
% Labels
% =====================================================
\node at (-6.2,.75) {\scriptsize $v_{i-\alpha}$};
\node at (2.5,.75) {\scriptsize $v_{i}$};
\node at (0,.75) {\scriptsize $v_{i-1}$};
\node at (-5.2,.75) {\scriptsize $v_r$};
\node at (-4.5,.75) {\scriptsize $v_m^{*}$};
\node at (-3,.75) {\scriptsize $v_{i-\alpha+2}$};
\node at (0.6,.75)  {\scriptsize $v_m$};
\node at (2.2,.75)  {\scriptsize $v_l$};

% =====================================================
% Top S1 curve (v_m -> v_l)
% =====================================================
\coordinate (Tstart) at (0.5,4.6);
\coordinate (Tend)   at (-5.2,2.6); % left end unchanged

% Adjusted control points for monotonically decreasing concave curve
\draw[thick,blue]
  (Tstart)
  .. controls (0.3,4.5) and (-2.5,4.3) ..
  (Tend)(Tend)
   node[midway,above] {\scriptsize $S_1$};

% =====================================================
% Bottom S1 curve (v_r -> v_m^*)
% =====================================================
\coordinate (Bstart) at (2.2,4.3); % right end unchanged
\coordinate (Bend)   at (-4.5,1.65); % left end unchanged

% Adjusted control points for monotonically decreasing concave curve
\draw[thick,blue]
  (Bstart)
  .. controls (1.2,4.1) and (-2.2,3.5) ..
  (Bend)
  node[midway,below] {\scriptsize $S_1$};

% =====================================================
% Straight connections between ends
% =====================================================
% Right ends
\draw[thick,red] (Tstart) -- (Bstart);
% Left ends
\draw[thick,red] (Tend) --(-4.8,1.9)-- (Bend);

% =====================================================
% Ellipsis
% =====================================================
\node at (-1,3) { $\cdots$};

\end{tikzpicture}
\captionof{figure}{Illustration of interaction of rarefaction curves in $vu$-plane: $R_{2}, S_{1}$ are in red and blue, respectively.} \label{fig-2}
\end{center}
\end{itemize}
% \begin{center}
% \begin{tikzpicture}

% % Vertical grid lines (all parallel)
% \draw[thick] (-4,0) -- (-4,6);
% \draw[thick] (-2.5,0) -- (-2.5,6);
% \draw[thick] (-1,0) -- (-1,6);
% \draw[thick] (0.5,0) -- (0.5,6);
% \draw[thick] (3,0) -- (3,6);

% % Bottom labels
% \node at (-4,-0.25) {\scriptsize $v_{i-2}$};
% \node at (-2.5,-0.25) {\scriptsize $v_{i-1}$};
% \node at (-1,-0.25) {\scriptsize $v_i$};
% \node at (0.5,-0.25) {\scriptsize $v_{i+1}$};
% \node at (3,-0.25) {\scriptsize $v_{i+2}$};

% % Dashed vertical characteristic vp (parallel)
% \draw[dashed, thick] (-0.25,0) -- (-0.25,6);
% \node at (-0.25,6.25) {\scriptsize $v_r$};
% \node at (0.25,6.25) {\scriptsize $v_l$};
% \node at (-0.75,6.25) {\scriptsize $v_m$};

% % Dashed parallel boundaries inside the cell
% \draw[dashed] (-0.75,0) -- (-0.75,6);
% \draw[dashed] (0.25,0) -- (0.25,6);
% \draw[red] (.25,2)--(-.75,3);
% \draw[blue] (-.25,3.5)--(-.75,3);
% \node at (-.25,3.5) {\tiny $\bullet$};
% \node at (.25,2) {\tiny $\bullet$};
% \node at (-.75,3) {\tiny $\bullet$};
% \node at (-.25,2.5) {\tiny $\bullet$};
% \end{tikzpicture}    
% \end{center}

\subsection{Same family interaction}
Consider three states \((v_l,u_l)\), \((v_m,u_m)\), and \((v_r,u_r)\).
Assume that \((v_l,u_l)\) is connected to \((v_m,u_m)\), and
\((v_m,u_m)\) is connected to \((v_r,u_r)\), by two front of the same family,
say the \(i\)-family, with \(i\in\{1,2\}\). We denote by
$|\mu_i|\text{ and }|\mu_i'|$
the strengths of the two incoming \(i\)-family waves. After the interaction,
the outgoing waves consist of an \(i\)-family wave and a \((3-i)\)-family wave.
Their strengths are denoted by $|\widetilde{\mu}_i|\text{ and } |\widetilde{\mu}_{3-i}|,$
respectively. 
   \begin{itemize}
     \item[1.] {\bf ${\bf S_{1}+S_{1}/S_{2}+S_{2}}$ interaction:} In the $S_{i}+S_{i}$, for $i=1,2$ interaction the outgoing waves are $R_{3-i}$ and $S_{i}$. Interaction estimate follows directly from  Lemma \ref{shock-rarefaction} and Lemma \ref{shock-shock}, we can estimate, 
\begin{align*}
     \left| \tilde{\mu}_{i}-\mu_{i}-\mu_{i}'\right|+|\tilde{\mu}_{3-i}|&\le C|\mu_{i}||\mu_{i}'|(|\mu_{i}|+|\mu_{i}'|)
\end{align*}
   \begin{center}
    \begin{tikzpicture}[scale=1]

% =====================================================
% Five solid vertical lines
% =====================================================
\foreach \x in {-9.5,-8,-3.5,-1,1,4}
  \draw[thick] (\x,1) -- (\x,6);
\foreach \x in {-9,-2,1.8, 3.5}
  \draw[dashed] (\x,1) -- (\x,6);

% =====================================================
% Labels
% =====================================================
\node at (-9.5,.75) {\scriptsize $v_{k}$};
\node at (4,.75) {\scriptsize $v_{i}$};
\node at (1,.75) {\scriptsize $v_{i-1}$};
\node at (-1,.75) {\scriptsize $v_{j}$};
\node at (-3.5,.75) {\scriptsize $v_{j-1}$};
\node at (-8,.75) {\scriptsize $v_{k+1}$};
\node at (-2,.75) {\scriptsize $v_m$};
\node at (-9,.75) {\scriptsize $v_r$};
\node at (1.8,.75)  {\scriptsize $v_l$};
\node at (3.5,.75)  {\scriptsize $v_m^{*}$};
% =====================================================
% Top S1 curve
% =====================================================
\coordinate (Tstart) at (-2,4.7);
\coordinate (Tend)   at (-9.,1.3);

\draw[thick,blue]
  (Tstart)
  .. controls (-3.3,4.6) and (-6,4.4) ..
  (Tend)
  node[midway,above] {\scriptsize $S_1$};

% =====================================================
% Bottom S1 curve
% =====================================================
\coordinate (Bstart) at (1.8,5.8);
\coordinate (Bend)   at (-2,4.7);

\draw[thick,blue]
  (Bstart)
  .. controls (1.2,5.7) and (.7,5.8) ..
  (Bend)
  node[midway,below] {\scriptsize $S_1$};
  
  % =====================================================
% Bottom S1 curve
% =====================================================
\coordinate (Bstart2) at (3.5,5);
\coordinate (Bend2)   at (-9.2,1.2);

\draw[thick,blue]
  (Bstart2)
  .. controls (1.2,4.9) and (-4,4.7) ..
  (Tend)
  node[midway,below] {\scriptsize $S_1$};

% =====================================================
% Convex (almost straight) red connections between ends
% =====================================================

%Right ends: very slightly convex to the right
\draw[thick,red]
  (Bstart)--(Bstart2);

%Left ends: very slightly convex to the left
% \draw[thick,red]
%   (Tend)--(-10,1.5)-- (Bend2);

% =====================================================
% Ellipsis
% =====================================================
 \node at (0,3) {\Huge $\cdots$};
% \node at (-4.6,4) {\Huge $\cdots$};
 \node at (-7,4) {\Huge $\cdots$};

\end{tikzpicture}
\captionof{figure}{Illustration of interaction of shock curves in $vu$-plane: $S_{1}, S_{1}$ are in red and blue, respectively.} \label{fig-3}
\end{center} 
\item[2.] {\bf ${\bf S_{1}+R_{1}/S_{2}+R_{2}}$ interaction:}In the $S_{i}+R_{i}$ interaction the outgoing waves are $S_{3-i}$ and $S_{i}$. Interaction estimate follows directly from  Lemma \ref{shock-rarefaction} and Lemma \ref{shock-shock}, we can estimate, 
\begin{align*}
     \left| \tilde{\mu}_{i}-\mu_{i}-\mu_{i}'\right|+|\tilde{\mu}_{3-i}|&\le C|\mu_{i}||\mu_{i}'|(|\mu_{i}|+|\mu_{i}'|)
\end{align*}
\begin{center}
    \begin{tikzpicture}[scale=1]

% =====================================================
% Five solid vertical lines
% =====================================================
\foreach \x in {-9.5,-8,-3.5,-1}
  \draw[thick] (\x,2) -- (\x,6);
\foreach \x in {-9,-8.32,-2}
  \draw[dashed] (\x,2) -- (\x,6);

% =====================================================
% Labels
% =====================================================
\node at (-9.5,1.75) {\scriptsize $v_{k}$};
\node at (-1,1.75) {\scriptsize $v_{j}$};
\node at (-3.5,1.75) {\scriptsize $v_{j-1}$};
\node at (-8,1.75) {\scriptsize $v_{k+1}$};
\node at (-2,1.75) {\scriptsize $v_l$};
\node at (-9,1.75) {\scriptsize $v_m$};
\node at (-8.32,6.25)  {\scriptsize $v_m^{*}$};
% =====================================================
% Top S1 curve
% =====================================================
\coordinate (Tstart) at (-2,5.7);
\coordinate (Tend)   at (-9,2.3);

\draw[thick,blue]
  (Tstart)
  .. controls (-3.3,5.6) and (-6,5.4) ..
  (Tend)
  node[midway,above] {\scriptsize $S_1$};

% =====================================================
% Bottom S1 curve
% =====================================================
% \coordinate (Bstart) at (1.8,5.8);
% \coordinate (Bend)   at (-2,4.7);

% \draw[thick,blue]
%   (Bstart)
%   .. controls (1.2,5.7) and (.7,5.8) ..
%   (Bend)
%   node[midway,below] {\scriptsize $S_1$};
  
  % =====================================================
% Bottom S1 curve
% =====================================================
% \coordinate (Bstart2) at (3.5,5);
% \coordinate (Bend2)   at (-9.2,1.2);

% \draw[thick,blue]
%   (Bstart2)
%   .. controls (1.2,4.9) and (-4,4.7) ..
%   (Tend)
%   node[midway,below] {\scriptsize $S_1$};

% =====================================================
% Convex (almost straight) red connections between ends
% =====================================================

%Right ends: very slightly convex to the right
\draw[thick,red]
  (Tend)--(-8,2.8);
  \draw[dashed,red]
  (-8,2.8)--(-8.32,2.955);
%Left ends: very slightly convex to the left
% \draw[thick,red]
%   (Tend)--(-10,1.5)-- (Bend2);

% =====================================================
% Ellipsis
% =====================================================
% \node at (0,3) {\Huge $\cdots$};
% \node at (-4.6,4) {\Huge $\cdots$};
 \node at (-7,3) {\Huge $\cdots$};

\end{tikzpicture}
\captionof{figure}{Illustration of interaction of shock curves in $vu$-plane: $S_{1}, R_{1}$ are in red and blue, respectively.} \label{fig-3}
\end{center}
\end{itemize}
\subsection{Interaction with nonphysical front $\Gamma_{np}$}
Consider three states \((v_l,u_l)\), \((v_m,u_m)\), and \((v_r,u_r)\). Assume that $\beta$ and $\gamma$ are the two incoming shock and rarefaction fronts of $i$-th family, respectively.  Let
\[
        \beta:\ (v_{l}, u_{l})\to (v_m,u_m),\qquad
        \gamma:\ (v_m,u_m)\to (v_r,u_r),
\]
since $\gamma$ is a rarefaction front then $v_{m}, v_{r}\in[v_{i}, v_{i+1}]$ for some $i\in\mathbb{Z}$. After the interaction the outgoing fonts $i$-th family shock $\beta'$ and nonphysical shock $\Gamma_{np}$ are given 
\[
        \beta':\ (v_{l}, u_{l})\to (\hat{v}_r,\hat{u}_r),\qquad
        \Gamma_{np}:\ (\hat{v}_r,\hat{u}_r)\to (v_r,u_r),
\]
where \((\hat{v}_r, \hat{u}_{r})=(v_{r}, u_{l}-\sqrt{(p_{n}(v_{m})-p_{n}(v_{l}))(v_{l}-v_{r})})\). Set incoming front strength fronts $\beta$ and $\gamma$ are $|\mu_{\beta}|=|u_{m}-u_{l}|$ and $|\mu_{\gamma}|=|u_{r}-u_{m}|$, respectively.
 % Assume that $\beta$ and $\gamma$ are the two incoming shock and rarefaction fronts of $1$-family, respectively.  Let
% \[
%         \beta:\ (v_{l}, u_{l})\to (v_m,u_m),\qquad
%         \gamma:\ (v_m,u_m)\to (v_r,u_r),
% \]
% since $\gamma$ is a rarefaction front then $v_{m}, v_{r}\in[v_{i}, v_{i+1}]$ for some $i\in\mathbb{Z}$. Hence,
% \[\beta: u_{m}-u_{l}=\sqrt{(p_{n}(v_{m})-p_{n}(v_{l}))(v_{l}-v_{r})},\qquad
% \gamma:u_{r}-u_{m}=\Lambda(v_{r}-v_{m}),
% \]
% where $\Lambda=\sqrt{\frac{p(v_{i})-p(v_{i+1})}{v_{i+1}-v_{i}}}$.
% Now assume that the interaction is resolved by the modified Riemann solver, thus outgoing fonts $1$-family shock $\beta'$ and nonphysical shock $\Gamma_{np}$ are given 
% \[
%         \beta':\ (v_{l}, u_{l})\to (\hat{v}_r,\hat{u}_r),\qquad
%         \Gamma_{np}:\ (\hat{v}_r,\hat{u}_r)\to (v_r,u_r),
% \]
% where \((\hat{v}_r, \hat{u}_{r})=(v_{r}, u_{l}-\sqrt{(p_{n}(v_{m})-p_{n}(v_{l}))(v_{l}-v_{r})})\). Set incoming front strength fronts $\beta$ and $\gamma$ are $|\mu_{\beta}|=|u_{m}-u_{l}|$ and $|\mu_{\gamma}|=|u_{r}-u_{m}|$, respectively. 
Then directly from Lemma \ref{shock-rarefaction} and Lemma \ref{shock-shock}, we get 
\begin{align}\label{non-phy-est}
    |\mu_{np}|=|\hat{u}_{r}-u_{r}|\le C|\mu_{\beta}||\mu_{\gamma}|.
\end{align}
  In the next section, we will establish the uniform BV bound for approximate  weak solution to system \eqref{appr-p}. 
\section{Uniform BV bound}\label{uni-bv}
We consider the Glimm functional defined as follows
\begin{align}
    G(t)&=\sum\limits_{i}|\mu_{i}|+\theta\sum\limits_{(\mu_{k}, \mu_{j})\in A}|\mu_{k}||\mu_{j}|,
\end{align}
 where $\theta$ is a positive constant and $A$ is the approaching set. We show that $G(t)$ is non increasing in time. % and $f(|\mu_{k}|, |\mu_{j}|)$ is defined as $f(|\mu_{k}|, |\mu_{j}|)=0$ if $|\mu_{k}|=|\mu_{k}'|$ and $|\mu_{j}|=|\mu_{j}'|$,  otherwise $f(|\mu_{k}|, |\mu_{j}|)=|\mu_{k}||\mu_{j}|(|\mu_{k}|+|\mu_{j}|)$.
 Assume that at time $t$, two wave fronts interact with strengths $|\mu_{1}|$ and $|\mu_{2}|$ and the outgoing wave fronts strengths are $|\mu_{1}'|$ and $|\mu_{2}'|$. Therefore,
\begin{align*}
   G(t-)&=|\mu_{1}|+|\mu_{2}|+\sum\limits_{i}|\mu_{i}|+\theta |\mu_{1}||\mu_{2}|+\theta\sum\limits_{A}|\mu_{k}||\mu_{1}|+\theta\sum\limits_{A}|\mu_{\hat{k}}| |\mu_{2}|+\theta\sum\limits_{A}|\mu_{k}||\mu_{j}|\\
   G(t+)&=|\mu_{1}'|+|\mu_{2}'|+\sum\limits_{i}|\mu_{i}|+\theta\sum\limits_{A}|\mu_{k}||\mu_{1}'|+\theta\sum\limits_{A}|\mu_{\hat{k}}| |\mu_{2}'|+\theta\sum\limits_{A}|\mu_{k}||\mu_{j}|,
\end{align*}
Hence, we can write
\begin{align*}
    G(t+)-G(t-)&=(|\mu_{1}'|-|\mu_{1}|)+(|\mu_{2}'|-|\mu_{2}|)-\theta|\mu_{1}||\mu_{2}|+\theta\sum\limits_{A}|\mu_{k}|(|\mu_{1}'|-|\mu_{1}|)\\&+\theta\sum\limits_{A}|\mu_{\hat{k}}|( |\mu_{2}'|-|\mu_{2}|),
\end{align*}
from the analysis of interaction estimate in previous subsection, we get
\begin{align*}
    G(t+)-G(t-)&\le C|\mu_{1}||\mu_{2}|(|\mu_{1}|+|\mu_{2}|)-\theta|\mu_{1}||\mu_{2}|+C\theta|\mu_{1}||\mu_{2}|\left((|\mu_{1}|+|\mu_{2}|)\sum\limits_{A}|\mu_{k}|+|\mu_{\hat{k}}|\right),%\\&+C\theta|\mu_{1}||\mu_{2}|\sum\limits_{A}|\mu_{\hat{k}}|(|\mu_{1}|+|\mu_{2}|)
\end{align*}
now assume that $\left((|\mu_{1}|+|\mu_{2}|)\sum\limits_{A}|\mu_{k}|+|\mu_{\hat{k}}|\right):=\delta_{0}$ such that $C\delta_{0}=\frac{1}{2}$, hence this gives us
\begin{align*}
   G(t+)-G(t-)&\le C|\mu_{1}||\mu_{2}|(|\mu_{1}|+|\mu_{2}|)-\frac{\theta}{2}|\mu_{1}||\mu_{2}|\\
   &\le\left(C(|\mu_{1}|+|\mu_{2}|)-\frac{\theta}{2}\right)|\mu_{1}||\mu_{2}|
\end{align*}
by choosing the constant $\theta$ large enough, this gives us $ G(t+)\le G(t-)$. From \eqref{mtv} of Lemma \ref{lemmamtv}, we have
  $ TV(v(t+), u(t+))\le C_{2}M(t+)\le C_{2}G(t+)\le C_{2}G(0)$, note that $G(0)\le M(0)+\theta M(0)^{2}$. Now, assume that $TV(v(0),u(0))=\delta'$, then $TV(v(t+), u(t+))\le C_{2}\left(\frac{\delta'}{C_{1}}+\theta\frac{\delta'^2}{C_{1}^2}\right)$, by choosing the $\delta'$ such that $C_{2}\left(\frac{\delta'}{C_{1}}+\theta\frac{\delta'^2}{C_{1}^2}\right)\le\delta_{0}$. Hence, this gives us the uniform bound on the total variation of the weak solution to \eqref{appr-p}.
 \begin{proposition}
The construction of front tracking approximate weak solution obtained from our scheme must have finite number of fronts for all time.  
\end{proposition}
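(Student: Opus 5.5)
The plan is to combine the counting-vector monotonicity of Subsection \ref{decay-count-poly} with the threshold rule of the modified Riemann solver. The accurate Riemann solver is used at every interaction except a same-family shock--rarefaction interaction $S_i+R_i$ with $|\mu_\beta||\mu_\gamma|\le h_n$; those interactions are resolved by the modified solver and produce only a shock and a nonphysical front. The argument then proceeds in three steps, on any bounded time interval $[0,T]$.

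\textbf{Step 1: large-product interactions are finitely many.} Only finitely many $S_i+R_i$ interactions can use the accurate solver, since each requires $|\mu_\beta||\mu_\gamma|> h_n$. At such an interaction the approaching pair $(\beta,\gamma)$ disappears. Combining the interaction estimates of Section \ref{sec-uni-bv} with the choice of $\theta$ large in Section \ref{uni-bv} gives
\[
G(t+)-G(t-)\le -\tfrac{\theta}{4}|\mu_\beta||\mu_\gamma|\le -\tfrac{\theta}{4}h_n .
\]
Since $G\ge 0$, $G$ is nonincreasing, and $G(0)<\infty$, at most $4G(0)/(\theta h_n)$ such interactions occur.

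\textbf{Step 2: fronts are finite between consecutive large-product interactions.} Between two such times, all interactions are either in the restricted class of Proposition \ref{finite-front}, or small $S_i+R_i$ interactions handled by the modified solver, or interactions involving nonphysical fronts. To cover these, extend the counting vector $X$. An interaction treated by the modified solver removes a rarefaction and merges into one shock plus a zero-speed nonphysical front. A nonphysical front is crossed by each physical front at most once, because physical fronts have nonzero speeds $\pm\Lambda$ bounded away from $0$ on the compact set. So I would add to $X$ a leading component counting pairs (physical front, nonphysical front) that are still approaching, ordered lexicographically after $M_S$. I would then check, case by case as in Subsection \ref{decay-count-poly}, that each interaction still strictly decreases the extended vector:
\begin{itemize}
\item the modified solver interaction creates no new rarefaction, and the new nonphysical front only approaches fronts whose degree is already accounted for;
\item a crossing of a nonphysical front preserves the type of the physical front, and since $\hat v_r=v_r$ it keeps its grid interval, hence its $L$-value.
\end{itemize}
With finitely many fronts at the start of each interval, this yields finitely many interactions there.

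\textbf{Step 3: conclude.} Concatenating the finitely many intervals from Step 1 gives finitely many fronts on $[0,T]$ for each $T$. The finiteness of the number of Riemann outputs ($M_n$ from Lemma \ref{lem:fixed-generation-counting-low-smallness}) guarantees that each interaction itself creates only finitely many fronts.

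The main obstacle is Step 2, specifically ordering the new nonphysical-front component consistently with $M_S$ and the degrees $A_d$. The modified solver does not decrease $M_S$, and it creates a new front that can approach many others. I would first try placing the count of approaching (physical, nonphysical) pairs after $M_S$ but before the $A_d$. The supporting observation is that the modified interaction strictly removes a rarefaction of positive degree and replaces it by a front that contributes only to that new component; if this ordering fails, I would instead bound nonphysical crossings directly by (number of physical fronts)$\times$(number of nonphysical fronts).
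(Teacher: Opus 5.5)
Your strategy is the paper's: the Glimm functional bounds the number of accurately solved $S_i+R_i$ interactions, and the counting vector $X$ of Subsection~\ref{decay-count-poly} handles the rest. Your Step~2 is more careful than the paper's own proof. The paper never counts the small-product $S_i+R_i$ interactions or the crossings of nonphysical fronts. It infers that nonphysical fronts are finite from the finiteness of the large-product interactions, although nonphysical fronts are created precisely at the small-product ones.

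The step that fails as first stated is your ordering of the extended vector. At a modified interaction $M_S$ is unchanged. Meanwhile the new stationary front $\Gamma_{np}$ is approached by every $2$-front to its left and every $1$-front to its right. So a count of approaching (physical, nonphysical) pairs placed right after $M_S$ can increase, and the extended vector goes up.

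In fact no coordinate is needed ahead of the $A_d$:
\begin{itemize}
\item At a modified interaction, $\gamma$ and every pair containing it disappear, in particular $(\beta,\gamma)$ of degree $L(\gamma)$. No rarefaction is created.
\item The outgoing $\beta'$ is a shock of the same family, at the same location, carrying $\mathscr L(\beta)$. It inherits exactly the approaching pairs and degrees of $\beta$, since a degree depends only on the rarefaction partner.
\item $\Gamma_{np}$ does not enter $X$, so the original $X$ strictly decreases at a modified interaction.
\item By your second bullet, $X$ is unchanged at crossings.
\end{itemize}

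You then have two ways to finish. One is to put the (physical, nonphysical) count as the \emph{last} coordinate. It drops by one at each crossing, and at every other interaction a higher coordinate already drops ($M_S$ at $S_i+S_i$, some $A_d$ with all higher ones fixed otherwise). The other is your fallback. Finitely many non-crossing interactions per interval give finitely many physical and nonphysical fronts. Each such pair crosses at most once, because physical speeds have a sign fixed by the family and bounded away from $0$.

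Also state explicitly that the labels and $X$ are re-initialized at the start of each interval of Step~1. Accurately solved $S_i+R_i$ interactions create $(3-i)$-shocks that carry no initial label, so the counting of Subsection~\ref{decay-count-poly} cannot simply be continued across them.
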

\begin{proof}
    For the proving the number of fronts are finite, we start with interactions of same family shock-rarefaction interactions. Let $\beta$ and $\gamma$ are the shock and rarefaction fronts of $i$-th family with strength $|\mu_{\beta}|$ and $|\mu_{\gamma}|$, respectively. Note that at every such interaction the Glimm functional $G(t)$ decrease by the $\kappa|\mu_{\beta}||\mu_{\gamma}|$, for some fix $0<\kappa<1$. Since these interaction can produce a new shock front of other family only when $\kappa|\mu_{\beta}||\mu_{\gamma}|\ge h_{n}$. Due to the fact that the Glimm functional is uniformly bounded by $G(0)$ and decrease at least by $h_{n}$ at such interactions, thus the total number of shock fronts remains finite. Consequently, we have nonphysical fronts only on these interactions, therefore the number of nonphysical fronts are also finite. The interactions of the restricted class handled by the counting functional $X(t)$. Since the number of fronts are shock fronts are finite, therefore, from analysis in subsection \ref{decay-count-poly}, we can have only finitely many such interactions. Therefore the total of fronts remains finite for all time in our construction of front tracking approximation.  
\end{proof}
 %  Now if $f(|\mu_{1}|,|\mu_{2}|)=|\mu_{1}||\mu_{2}|(|\mu_{1}|+|\mu_{2}|)$, then under the assumption of that TV($U_{0}$)$\cdot||U_{0}||_{\f}\le\delta_{0}$ with $\delta_{0}>0$ is sufficiently small and $C$ is large enough, we get  $G(t+)\le G(t-)$. Note that $\sum\limits_{A}f(|\mu_{k}|,|\mu_{j}|)\le\left(\sum\limits_{i}|\mu_{i}|\right)^{3}$

\section{Stability estimates and proof of the main result}\label{main-estimates}
Before starting the proof of main results we recalling an important lemma from \cite{bressan2000hyperbolic}, which will be used in the proof of the main result.
\begin{lemma}[\cite{bressan2000hyperbolic}]\label{bressan-lemma}
Let $S:\mathcal{D}\times[0,\f)\rr[0,\f)$ be a Lipschitz semigroup with Lipschitz constant $L$ and let $w:[0,T]\rr\mathcal{D}$ be a Lipschitz continuous function. Then
\begin{eqnarray*}
||w(T)-S_{T}w(0)||_{L^{1}}&\le& L\int\limits_{0}^{T}\liminf_{h\rr0}\frac{||w(t+h)-S_{h}w(t)||_{L^{1}}}{h}\dy t.
\end{eqnarray*}
\end{lemma}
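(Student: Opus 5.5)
We need to prove the error formula: for a Lipschitz semigroup $S$ with constant $L$ and a Lipschitz path $w:[0,T]\to\mathcal D$,
\[
\|w(T)-S_Tw(0)\|_{L^1}\le L\int_0^T \liminf_{h\to0}\frac{\|w(t+h)-S_hw(t)\|_{L^1}}{h}\,dt .
\]
The plan is to compare $w(T)$ with $S_Tw(0)$ through the intermediate points $S_{T-t}w(t)$. Define
\[
\phi(t):=\|S_{T-t}w(t)-S_Tw(0)\|_{L^1},\qquad t\in[0,T].
\]
Then $\phi(0)=0$ and $\phi(T)=\|w(T)-S_Tw(0)\|_{L^1}$, so the claim reduces to
\[
\phi(T)\le L\int_0^T \rho(t)\,dt,\qquad \rho(t):=\liminf_{h\to0+}\frac{\|w(t+h)-S_hw(t)\|_{L^1}}{h}.
\]

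The first step is to show that $\phi$ is Lipschitz, hence absolutely continuous. By the Lipschitz property of the semigroup (item 2 of Definition \ref{semigroup}),
\[
|\phi(t+h)-\phi(t)|\le \|S_{T-t-h}w(t+h)-S_{T-t}w(t)\|_{L^1}\le L\|w(t+h)-w(t)\|_{L^1}+L_2h .
\]
Since $w$ is Lipschitz in time, the right-hand side is $O(h)$. So $\phi$ is differentiable almost everywhere and $\phi(T)=\int_0^T\phi'(t)\,dt$.

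The second step is the local estimate. Using the semigroup property, write $S_{T-t}w(t)=S_{T-t-h}S_hw(t)$. The triangle inequality then gives
\[
\phi(t+h)-\phi(t)\le \|S_{T-t-h}w(t+h)-S_{T-t-h}S_hw(t)\|_{L^1}\le L\|w(t+h)-S_hw(t)\|_{L^1}.
\]
Dividing by $h$ yields $\liminf_{h\to0+}\frac{\phi(t+h)-\phi(t)}{h}\le L\rho(t)$ for every $t$. At every point where $\phi$ is differentiable, this liminf equals $\phi'(t)$, so $\phi'(t)\le L\rho(t)$ almost everywhere. Integrating over $[0,T]$ gives the claim.

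The main obstacle I expect is a measurability and integrability issue. The function $\rho$ is only defined through a liminf, so it must be measurable for the integral on the right to make sense. I would handle this by noting that $\rho(t)\le L'+L_2$, where $L'$ is the Lipschitz constant of $w$. I would then argue measurability by restricting $h$ to a sequence $h_k\to0$, which only changes $\rho$ upward, and that is harmless once the estimate is written with the upper integral. Alternatively, I could avoid the issue entirely by interpreting the right-hand side as an upper Lebesgue integral. The chain of inequalities goes through because $\phi$ is absolutely continuous.
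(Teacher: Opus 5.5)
The paper does not prove this lemma; it quotes it from Bressan's monograph. Your argument is the standard proof given there, and it is correct: you track $\phi(t)=\|S_{T-t}w(t)-S_Tw(0)\|_{L^1}$, use the time-Lipschitz constant $L_2$ to make $\phi$ Lipschitz, and bound $\phi'$ a.e.\ through $S_{T-t}w(t)=S_{T-t-h}S_hw(t)$.

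The one slip is in your measurability remark. Restricting $h$ to a sequence $h_k\to0$ enlarges $\rho$, so it only yields a weaker inequality and is not harmless. Instead, note that $(t,h)\mapsto\|w(t+h)-S_hw(t)\|_{L^1}/h$ is continuous for $h>0$, by the Lipschitz bounds on $w$ and $S$. Hence the $\liminf$ can be taken over rational $h$, which makes $\rho$ Borel measurable. Your upper-integral alternative also works.
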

\begin{lemma}[ \cite{bia-col-02, bressan2000hyperbolic}]
\label{local-con-est}
Let $S_t$ be the standard Riemann semigroup generated by the following strictly hyperbolic conservation laws
\[
        w_t+F(w)_x=0 .
\]
Let
\[
        \bar w(t,x)=
        \begin{cases}
        w_-, & x<\lambda t,\\
        w_+, & x>\lambda t,
        \end{cases}
\]
be a single  front with speed $\lambda$. If the front is of shock or contact type, then
\begin{equation*}
  \liminf_{h\to0+}
        \frac{
        \|\bar w(h,\cdot)-S_h\bar w(0,\cdot)\|_{L^1}
        }{h}
        \leq
        C|\mathcal{E}|.   
\end{equation*}    
If the front is a rarefaction front, then
\begin{equation*}
 \liminf_{h\to0+}
        \frac{
        \|\bar w(h,\cdot)-S_h\bar w(0,\cdot)\|_{L^1}
        }{h}
        \leq
        C|\mathcal{E}|
        +
        C|w_+-w_-|^2,   
\end{equation*}
where
$\mathcal{E}:=
        F(w_+)-F(w_-)-\lambda(w_+-w_-)$ and  $C>0$ constant.
\end{lemma}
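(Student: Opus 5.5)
This estimate is the single-front local error bound of Bianchini--Colombo, recalled from \cite{bia-col-02, bressan2000hyperbolic}. I would reproduce its proof by comparing $\bar w(h,\cdot)$ with the exact Riemann solution. By consistency of $S$ with the Riemann solver (property 3 of Definition \ref{semigroup}), $S_h\bar w(0,\cdot)$ is the self-similar solution $\omega((x)/h)$ of the Riemann problem with data $(w_-,w_+)$. Both $\bar w(h,\cdot)$ and $S_h\bar w(0,\cdot)$ are self-similar, so
\[
\|\bar w(h,\cdot)-S_h\bar w(0,\cdot)\|_{L^1}=h\int_{\mathbb R}|\bar\omega(\xi)-\omega(\xi)|\dy\xi,
\]
where $\bar\omega(\xi)=w_-$ for $\xi<\lambda$ and $\bar\omega(\xi)=w_+$ for $\xi>\lambda$. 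The $\liminf$ is therefore exactly $\int|\bar\omega-\omega|\dy\xi$, and the task reduces to bounding the $L^1$ distance between two self-similar profiles.

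The next step is to parametrize the exact Riemann solution. Write $w_+=\Psi_2(\sigma_2)\circ\Psi_1(\sigma_1)(w_-)$ with wave strengths $\sigma_1,\sigma_2$ and intermediate state $w_m$. The profile $\omega$ equals $w_-$ for $\xi<\lambda_1^{\min}$, $w_m$ between the two waves, and $w_+$ for $\xi>\lambda_2^{\max}$. In the remaining small regions it takes values along the rarefaction curves, whose width is $\mathcal O(|\sigma_i|)$.

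Now suppose the front is of shock or contact type and $\lambda$ is close to the $k$-th characteristic speed. Decompose $w_+-w_-$ along the Hugoniot/eigen-directions. Strict hyperbolicity, together with the fact that the Hugoniot relation is the zero set of $\mathcal E$, gives through the implicit function theorem
\[
|\sigma_{3-k}|\le C|\mathcal E|,\qquad |\lambda-\lambda_k(w_-,w_+)|\,|w_+-w_-|\le C|\mathcal E|,
\]
where $\lambda_k(w_-,w_+)$ is the Rankine--Hugoniot speed. The $L^1$ integral then splits into two pieces. The first is the region swept by the other-family wave, which contributes at most $C|\sigma_{3-k}|$. The second is the gap between the speed $\lambda$ and the true $k$-shock speed, which contributes at most $|\lambda-\lambda_k|\,|w_+-w_-|$. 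Both pieces are $\le C|\mathcal E|$. For a $k$-shock, admissibility makes $\sigma_k$ of the correct sign, so no rarefaction fan appears.

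If the front is instead a rarefaction front, the exact solution spreads the same jump over a fan of width $\mathcal O(|w_+-w_-|)$. Inside the fan the deviation from $\bar\omega$ is $\mathcal O(|w_+-w_-|)$, which produces the extra term $C|w_+-w_-|^2$. The contribution of the other-family wave is bounded by $C|\mathcal E|$ exactly as in the shock case.

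The main obstacle is the estimate $|\sigma_{3-k}|+|\lambda-\lambda_k|\,|w_+-w_-|\le C|\mathcal E|$. It requires the map $(\sigma_1,\sigma_2,\lambda)\mapsto\mathcal E$ to be non-degenerate, which follows from linear independence of $r_1,r_2$ and the $C^2$ dependence of the wave curves on the states. For $F$ as in \eqref{p-sys} this smoothness holds, so I would invoke \cite[Ch. 7]{bressan2000hyperbolic} for these curve expansions rather than recompute them.
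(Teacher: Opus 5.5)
The paper gives no proof of this lemma. It quotes it from Bianchini--Colombo and Bressan and uses it as a black box in the proof of Theorem \ref{main-theorem}, so your sketch supplies an argument the paper omits. Your outline is the standard one and is sound. The self-similar reduction is exact, so the $\liminf$ equals $\int_{\mathbb R}|\bar\omega-\omega|\,d\xi$, and everything reduces to bounding the other-family strength and the speed mismatch by $|\mathcal E|$. Three steps need tightening, though none of them breaks the result.

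(i) The bounds do not come from non-degeneracy of $(\sigma_1,\sigma_2,\lambda)\mapsto\mathcal E$. That map is degenerate at $\sigma=0$, $\lambda=\lambda_k(w_-)$, where its $\sigma_k$- and $\lambda$-derivatives vanish, and this is exactly why the $k$-wave only gets a weighted bound. The clean route is the factorization $\mathcal E=(A-\lambda I)(w_+-w_-)$ with $A=\int_0^1 DF(w_-+s(w_+-w_-))\,ds$. Writing $w_+-w_-=a_1\tilde r_1+a_2\tilde r_2$ in the eigenbasis of $A$ gives, for $\lambda$ near $\lambda_k$, $|a_{3-k}|+|a_k|\,|\tilde\lambda_k-\lambda|\le C|\mathcal E|$. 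If $\lambda$ is near neither characteristic speed, then $|w_+-w_-|\le C|\mathcal E|$ and the bound is trivial. The speed to compare with is the eigenvalue $\tilde\lambda_k$ of $A$, which lies within $C|\sigma_{3-k}|$ of the true $k$-shock speed. A ``Rankine--Hugoniot speed of $(w_-,w_+)$'' is undefined off the Hugoniot locus.

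(ii) The set $\{a_{3-k}=0\}$ is the $k$-Hugoniot locus through $w_-$; this is the implicit-function construction you have in mind. It coincides with the $k$-wave curve only on the admissible side (or everywhere, for a linearly degenerate family). So $|\sigma_{3-k}|\le C|a_{3-k}|\le C|\mathcal E|$ requires the $k$-wave of the exact Riemann solution for $F$ to be an admissible shock or a contact. This is a genuine hypothesis hidden in ``shock type'': a non-admissible Rankine--Hugoniot jump has $\mathcal E=0$ but error of order $|w_+-w_-|^2$. It does not follow automatically from admissibility with respect to another flux. For the paper's $F_n$-shocks it does hold, because $w_+$ lies within $C2^{-n}|w_+-w_-|$ of the admissible branch of the $F$-shock curve.

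(iii) In the rarefaction case, the other-family bound is not ``exactly as in the shock case''. Since $R_k$ and the Hugoniot locus agree only to second order, you get $|\sigma_{3-k}|\le C|\mathcal E|+C|\sigma_k|^3$. You must also control the offset of the fan from $\lambda$, which you did not address: by (i), $|\lambda-\lambda_k(w_-)|\,|w_+-w_-|\le C|\mathcal E|+C|w_+-w_-|^2$. Both extra terms are absorbed into $C|w_+-w_-|^2$, so the stated estimate holds.
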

\begin{proof}[Proof of Theorem \ref{exi-app-p}]
To prove that there exists a weak solution to system \eqref{appr-p} for any initial data in $w_{0}\in \mathcal{D}$. We start with a piecewise constant approximation $w_{0}^m=(v_{0}^m, u_{0}^m)$ of initial data $w_{0}$. Then, from our construction, the system \eqref{appr-p} has a global piecewise constant approximate weak solution. However, from \eqref{weak-sol}, we know that the obtained approximate solution satisfies Rankine-Hugoniot condition exactly for all physical fronts and only error occurs due to the nonphysical fronts. Due to the Lemma \ref{nonphysical-smallness}, we can choose the nonphysical front parameter $h_{m}$ such that 
\[
    \sum_{\Gamma_{np}\in\mathcal{F}_{np}}
        |\mu_{np}|
        \leq
        \frac{1}{2^m},
\]
where $\mathcal{F}_{np}$ is the set of all nonphysical fronts. Let $w^{m}$ be the constructed approximate solution corresponding to initial data $w_{0}^m$.  From section \ref{uni-bv}, we know that 
\begin{align}\label{ubv}
 TV(w^m)\le C TV(w_{0}),   
\end{align}
where $C$ is a constant depends only on the fixed compact set $K$. From \eqref{ubv}, all states stay in a fixed compact set and all physical and nonphysical speeds are uniformly bounded. Hence, for all $t>s\ge0$
\begin{align}
    ||w^m(t)-w^m(s)||_{L^{1}(\R)}\le C(t-s)
\end{align}
By from Helly's theorem we can extract a subsequence such that $w^m\rr w$ in $L^1_{loc}$. Now, to prove that $w$ is weak solution we need to show that for
$\varphi\in C_{c}^{\f}(\R\times[0,T))$ it satisfies
\begin{align*}
\int\limits_{0}^{T}\int\limits_{\R}(w\varphi_{t}+F_{n}(w)\varphi_{x})\dy x\dy t+\int\limits_{\R}w(x,0)\varphi(x,0)\dy x&=0,
\end{align*}
equivalently,
\begin{align*}
\lim_{m\rr\infty}\underbrace{\int\limits_{0}^{T}\int\limits_{\R}(w^m\varphi_{t}+F_{n}(w^m)\varphi_{x})\dy x\dy t+\int\limits_{\R}w^m(x,0)\varphi(x,0)\dy x}_{I_m}&=0.
\end{align*}
To prove that $\lim_{m\rr\infty}I_{m}=0$, by Lemma \ref{entropy-thm}, it is enough to prove that Rankine-Hugoniot condition holds along all discontinuities lines. From Section \ref{app-RH}, we note that $w^m$ satisfies Rankine-Hugoniot condition exactly for all physical fronts and only error occurs due to the nonphysical fronts. Hence,
\begin{align}
    \lim_{m\rr\infty}I_{m}=\lim_{m\rr\infty}\sum_{\Gamma_{np}\in\mathcal{F}_{np}}
        |\mu_{np}|
        \leq
        \lim_{m\rr\infty}\frac{1}{2^m}=0.
\end{align}  
This proves that the $w$ is weak solution to system \eqref{appr-p}.
\end{proof}
\begin{proof}[Proof of the Theorem \ref{weak-p-syst}]
   To prove this we start with a piecewise approximation $w_{0}^n=(v_{0}^n, u_{0}^n)$ of given initial data $w_{0}$. From our construction, the system \eqref{p-sys} has a global piecewise constant approximate weak solution $w^n$. Eq. \eqref{weak-sol} provides that $w^n$ satisfies approximate Rankine-Hugoniot condition along all discontinuities. More precisely, if we define $\mathcal{E}_{n}(\gamma)$ is Rankine-Hugoniot condition along the front $\gamma$, then
   \begin{align}\label{RH-estimate}
\sum_{\gamma\in\mathcal{F}_{S}}|\mathcal{E}_{n}(\gamma)|+\sum_{\beta\in\mathcal{F}_{R}}|\mathcal{E}_{n}(\beta)|+\sum_{\Gamma_{np}\in\mathcal{F}_{np}}|\mu_{np}|&\le\frac{C}{2^n}\left(\sum_{\gamma\in\mathcal{F}_{s}}|\mu_{\gamma}|+ \sum_{\beta\in\mathcal{F}_{R}}|\mu_{\beta}|\right)+\sum_{\Gamma_{np}\in\mathcal{F}_{np}}
        |\mu_{np}|,
        \end{align}
        where $\mathcal{F}_{S}, \mathcal{F}_{R}$, and $\mathcal{F}_{np}$ are the set all shock, rarefaction and nonphysical fronts, respectively.
   Due to the Lemma \ref{nonphysical-smallness}, we can choose the nonphysical front parameter $h_{n}$ such that 
   \[\sum_{\Gamma_{np}\in\mathcal{F}_{np}}
        |\Gamma_{np}|
        \leq
        \frac{1}{2^n}.\]
        From section \ref{uni-bv}, we know that 
\begin{align}\label{ubv}
 TV(w^n)\le C TV(w_{0}),   
\end{align}
where $C$ is a constant depends only on the fixed compact set $K$. From \eqref{ubv}, all states stay in a fixed compact set and all physical and nonphysical speeds are uniformly bounded. Hence, for all $t>s\ge0$
\begin{align}
    ||w^n(t)-w^n(s)||_{L^{1}(\R)}\le C(t-s)
\end{align}
By from Helly's theorem we can extract a subsequence such that $w^n\rr w$ in $L^1_{loc}$. Now, to prove that $w$ is weak solution we need to show that for
$\varphi\in C_{c}^{\f}(\R\times[0,T))$ it satisfies
\begin{align*}
\int\limits_{0}^{T}\int\limits_{\R}(w\varphi_{t}+F(w)\varphi_{x})\dy x\dy t+\int\limits_{\R}w(x,0)\varphi(x,0)\dy x&=0,
\end{align*}
equivalently,
\begin{align*}
\lim_{n\rr\infty}\underbrace{\int\limits_{0}^{T}\int\limits_{\R}(w^m\varphi_{t}+F(w^m)\varphi_{x})\dy x\dy t+\int\limits_{\R}w^m(x,0)\varphi(x,0)\dy x}_{I_n}&=0.
\end{align*}
To prove that $\lim_{m\rr\infty}I_{n}=0$, by Lemma \ref{entropy-thm}, it is enough to prove that Rankine-Hugoniot condition holds along all discontinuities lines. Therefore, from \eqref{RH-estimate}
\begin{align}
\lim_{n\rr\infty}I_{n}&\le\lim_{n\rr\infty}\frac{C}{2^n}\left(\sum_{\gamma\in\mathcal{F}_{s}}|\mu_{\gamma}|+\sum_{\beta\in\mathcal{F}_{R}}|\mu_{\beta}|\right)+\sum_{\Gamma_{np}\in\mathcal{F}_{np}}|\mu_{np}|\\
&\le\lim_{n\rr\infty}\left(\frac{C}{2^n}+\frac{C}{2^n}\right)=0.
\end{align} 
Hence, this proves that the $w$ is weak solution to system \eqref{p-sys}. Now we prove that this weak solution is also form a standard Riemann semigroup. For that we already know that the system \eqref{p-sys} admits a SRS \cite[See Theorem 8.1]{bressan2000hyperbolic}. Let $S_{t}$ be the semigroup generated by the system \eqref{p-sys} corresponding to the initial data $w_{0}$. Now, from \cite[See Theorem 9.4]{bressan2000hyperbolic} states that $w(x,t)=S_{t}w_{0}(x)$. This completes the proof of the theorem.
\end{proof}

% In this section, we prove our main results by deriving stability estimates for the weak entropy solution of system \eqref{p-sys}, with respect to the flux function.
% \begin{itemize}
%     \item[1.] We denote the approximate solution $(v^{n}, u^{n})$ to \eqref{appr-p} which we constructed using our approximation scheme as $U^{n}:=(v^{n}, u^{n})$
%      % Let $U^{n}=(v^{n}, u^{n})$ be the approximate solution to \eqref{appr-p} obtained from our approximation scheme.
%     \item[2.] We denote the weak entropy solution to \eqref{appr-p} by $S^{n}_{t}w_{0}^{n}=w^{n}(t)$ where $w_{0}^{n}$ is a piecewise constant function in $L^{1}$. Furthermore, $S^{n}_{t}w_{0}^{n}$ satisfies Lipschitz semigroup properties (for any $w_{0}\in BV\subset L^{1}$).
%     \item[3.] We denote the weak entropy solution to \eqref{p-sys} by $S_{t}w_{0}=w(t)$ where $w_{0}\in BV\subset L^{1}$ and satisfy the Lipschitz semigroup properties (cf. \cite{bressan2000hyperbolic}).
% \end{itemize}
% Next, we introduce a notation from \cite{bia-col-02} which again needed to state the main result. Let $f$ and $g$ are two flux such that $S^{f}_{t}$ and $S^{g}_{t}$ are corresponding Lipschitz semigroup generated by these fluxes, then
% \begin{eqnarray}
% \hat{d}(f, g)=\sup_{w\in\mathcal{R}(D^{g})}\frac{1}{w_{+}-w_{-}}||S^{f}_{1}-S^{g}_{1}||_{L^{1}},
% \end{eqnarray}
% where $\mathcal{R}(D^{g})$ is a set all piecewise constant functions in $D^{f}$ having a single jump.
\begin{proof}[Proof of Theorem \ref{main-theorem}]
Our goal is to prove the estimate
\begin{eqnarray*}
||S_{t}w_{0}-w^n(t)||_{L^{1}}\le C\cdot||DF-DF_{n}||_{L^{\f}}TV(w_{0})\, t.
\end{eqnarray*}
For fix $n$, let $w_{0}^m$ is the piecewise approximation of initial data $w_{0}$, and $w_{m}^{n}$ is the approximate front tracking solution to \eqref{appr-p}. By using Lemma \ref{bressan-lemma}, we have
\begin{align}\label{semigroup-limit}
    ||S_{t}w_{0}^{m}-w^n_{m}(t)||_{L^{1}}\le L\int\limits_{0}^{t}\liminf_{h\rr0}\frac{||S_{h}w^n_m(\tau)-w^n_m(\tau+h)||_{L^{1}}}{h}\dy t.
\end{align}
Note that in \eqref{semigroup-limit} the limit does not change between two interaction times. Hence to estimate
\begin{equation*}
  \frac{||S_{h}w^n_m(\tau)-w^n_m(\tau+h)||_{L^{1}}}{h}  
\end{equation*}
it suffices to get the estimate for a single front in $w^n_{m}$, since the solution $w^n_{m}$ has only finitely many fronts. Fix a time $\tau$ which is not an interaction time. At this time,  all physical fronts of $w^n_m$ satisfy the
Rankine--Hugoniot condition for $F_n(w)=(-u, p_{n}(v))$, while nonphysical fronts give an error whose total strength tends to zero as $h\to0$. Consider a physical front with speed $\xi$ such that
\begin{eqnarray}\label{F_n}
F_{n}(w_{-})-F_{n}(w_{+})=\xi(w_{-}-w_{+}),
\end{eqnarray}
where $w_{-}$ and $w_{+}$ are the left and right states, respectively. We define  $\mathcal{E}$ for the same front with respect to $F(w)=(-u,p(v))$ is
\begin{eqnarray*}
\mathcal{E}=F(w_{-})-F(w_{+})-\xi(w_{-}-w_{+}),
\end{eqnarray*}
Using \eqref{F_n}, we obtain
\begin{align*}
        \mathcal{E}
        &=
        F(w_{-})-F(w_{+})
        -
        \bigl(F_n(w_{-})-F_n(w_{+})\bigr)
        \\
        &=
        (F-F_n)(w_{-})-(F-F_n)(w_{+}).
\end{align*}
from the fundamental theorem of calculus gives
\begin{align*}
        (F-F_n)(w_{-})-(F-F_n)(w_{+})
        &=
        \int_0^1
        (DF-DF_n)\bigl(w_{+}+\theta(w_{-}-w_{+})\bigr)
        (w_{-}-w_{+})\,d\theta .
\end{align*}
Therefore
\begin{align*}
        |\mathcal{E}|
        &\leq
        \int_0^1
        \left|
        (DF-DF_n)\bigl(w_{+}+\theta(w_{-}-w_{+})\bigr)
        \right|
        |w_{+}-w_{-}|\,d\theta\leq
        \|DF-DF_n\|_{L^\infty}\,|w_{+}-w_{-}|.
\end{align*}
Let $F(\tau)$ and $\mathcal{F}_{np}(\tau)$ denotes the set of physical fronts and set of nonphysical
fronts at $\tau$, respectively. By using the estimates from Lemma \ref{local-con-est} for each front,
\begin{align*}
\liminf_{h\to0+}\frac{||S_{h}w^n_m(\tau)-w^n_m(\tau+h)||_{L^{1}}}{h}&\leq C\|DF-DF_n\|_{L^\infty}\,|\sum_{\alpha\in\mathcal{F}(\tau)}
        |w_{+}^\alpha-w_{-}^\alpha|+C\sum_{\Gamma_{np}\in\mathcal{F}_{np}(\tau)}
        |\mu_{\Gamma_{np}}(\tau)|\\
        &\le C\|DF-DF_n\|_{L^\infty}\,\operatorname{TV}\bigl(w^{n}_{m}(\tau)\bigr)+\frac{C}{2^{m}}.
\end{align*}
From the uniform bound of total variation we get
\begin{align*}
\liminf_{h\to0+}\frac{||S_{h}w^n_m(\tau)-w^n_m(\tau+h)||_{L^{1}}}{h}
        &\le C\|DF-DF_n\|_{L^\infty}\,\operatorname{TV}\bigl(w^{m}_{0}\bigr)+\frac{C}{2^{m}}.
\end{align*}
Hence,
\begin{align*}
    ||S_{t}w_{0}^{m}-w^n_{m}(t)||_{L^{1}}\le C\|DF-DF_n\|_{L^\infty}\,\int\limits_{0}^{t}\operatorname{TV}\bigl(w^{m}_{0}\bigr)+\frac{C}{2^{m}},
\end{align*}
passing the limit $m\rr\f$, we get
\begin{eqnarray*}
||S_{t}w_{0}-w^n(t)||_{L^{1}}\le C\cdot||DF-DF_{n}||_{L^{\f}}TV(w_{0})\, t.
\end{eqnarray*}
Finally, using Lemma \ref{uniform-bound}, we conclude that
\begin{eqnarray*}
||S_{T}w_{0}-w^{n}(T)||_{L^{1}}&\le& \mathcal{O}\left(\frac{1}{2^{n}}\right)\cdot TV(w_{0})\cdot T.
\end{eqnarray*}
\end{proof}
\appendix
\section{Auxilary facts}
Here we  recall a theorem from \cite{bressan2000hyperbolic} which we used in proving the approximate entropy estimate for approximate solution constructed by the scheme.

\begin{customlemma}{A.1}\cite[Theorem 4.3]{bressan2000hyperbolic}\label{entropy-thm}
Let $u(x,t)$ be a solution of \eqref{p-sys} satisfying the piecewise Lipschitz regularity assumptions. Let $(Q,\eta)$ be a convex flux-entropy pair. Then the entropy condition 
\begin{equation*}
    \eta(u)_{t}+Q(u)_{x}\le0
\end{equation*}
holds if and only if along every jump curve $s_{j}(t)$ one have
\begin{equation*}
    Q(u^{+}_{j}(t))-Q(u^{-}_{j}(t))\le s'_{j}(t)(\eta(u^{+}_{j}(t))-\eta(u^{-}_{j}(t))),
\end{equation*}
where $u^{-}_{j}(t)$ and $u^{+}_{j}(t)$ are the left and right state at jump. 
\end{customlemma}

\begin{customlemma}{A.2}\label{uniform-bound}
Let $v\in[v_{i-1},v_{i}]$ and $u\in[v_{j-1},v_{j}]$ be in compact set $K\subset\R^{2}$, $i, j\in\mathbb{Z}$, then
\begin{equation}
    |p_{n}'(v)-p_{n}'(u)|\le L\cdot\frac{|i-j+1|}{2^{n}},
\end{equation}
where $L=\max_{K}|p''(x)|\sqrt{\frac{p'(m)}{p'(M)}}$.
\end{customlemma}
\begin{proof}
By our choice of approximation we know that
\begin{eqnarray}\label{pv0}
\left\{
\begin{array}{l}
v_{1} - v_{0} = \dfrac{1}{2^{n}}, \\[0.8ex]
(v_{1} - v_{0})\bigl(p(v_{0}) - p(v_{1})\bigr)
=
(v_{i} - v_{i-1})\bigl(p(v_{i-1}) - p(v_{i})\bigr),
\quad \text{for all } i
\end{array}
\right.
\end{eqnarray}
\eqref{pv0} gives the following for any $i$,
\begin{eqnarray}\label{vi-est}
\sqrt{\frac{p'(M)}{p'(m)}}|v_{1}-v_{0}|\le|v_{i}-v_{i-1}|\le\sqrt{\frac{p'(m)}{p'(M)}}|v_{1}-v_{0}|,
\end{eqnarray}
finally we consider,
\begin{eqnarray*}
|p_{n}'(v)-p_{n}'(u)|&=&\left|\frac{(p(v_{i-1}) - p(v_{i}))}{(v_{i} - v_{i-1})}-\frac{(p(v_{j}) - p(v_{j+1}))}{(v_{j+1} - v_{j})}\right|\\
&=&|p'(\xi)-p'(\hat{\xi})|\le\max_{K}|p''(x)||\xi-\hat{\xi}|
\end{eqnarray*}
where $\hat{\xi}\in(v_{j-1},v_{j})$, $\xi\in(v_{i-1},v_{i})$. Hence, by using \eqref{vi-est},
\begin{eqnarray*}
|p_{n}'(v)-p_{n}'(u)|&\le&L\cdot\frac{|i-j+1|}{2^{n}}.
\end{eqnarray*}
\end{proof}

\begin{customlemma}{A.3}[Uniform weighted generation bound] \label{wei-gen-bound} Fix $\chi>1$. Let $\mathcal F(t)$ denote the set of all fronts at time $t$, and $g(\gamma)$ is the generation of front $\gamma$. Define 
\[ V_{\chi}(t) := \sum_{\gamma\in\mathcal F(t)}|\mu_\gamma(t)|\,\chi^{g(\gamma)}. \]
Let \(\mathcal A(t)\) denote the set of approaching front pairs, and define 
\[ Q_{\chi}(t) := \sum_{(\alpha,\beta)\in\mathcal A(t)} |\mu_\alpha(t)\mu_\beta(t)|\, \chi^{\max\{g(\alpha),g(\beta)\}+1}. \] 
Then, for \(K>0\) sufficiently large and for sufficiently small BV initial data, the weighted functional 
\[ \Phi_\chi(t):=V_\chi(t)+KQ_\chi(t) \]
is non increasing at every interaction time. In particular, there exists a constant \(C>0\), such that \[ V_\chi(t)\leq C, \qquad\text{for every }t\geq0. \]
\end{customlemma} 
\begin{proof}
Let $\tau$ be an interaction time involving two incoming fronts \(\alpha\) and \(\beta\) with strength $|\mu_\alpha(\tau-)|$ and $|\mu_\beta(\tau-)|$, respectively. Set 
\[  g:=\max\{g(\alpha),g(\beta)\} \qquad \text{ and } \qquad
I_\chi(\alpha,\beta) := |\mu_\alpha(\tau-)\mu_\beta(\tau-)|\,\chi^{g+1}. \]
Now, we consider
\begin{align*}
\Phi_\chi(\tau+)-\Phi_\chi(\tau-):=V_\chi(\tau+)-V_\chi(\tau-)+K(Q_\chi(\tau+)-Q_\chi(\tau-)),
\end{align*}
any outgoing front at this interaction can have generation atmost $g+1$, and together with the interaction estimate, we get  
\[ \Delta V_\chi(\tau) := V_\chi(\tau+)-V_\chi(\tau-) \leq C_1 I_\chi(\alpha,\beta), \]
 for some constant $C_1>0$. We now estimate the change of \(Q_w\). Since \((\alpha,\beta)\) is an approaching physical pair, its contribution $I_\chi(\alpha,\beta)$ is removed from $Q_\chi$ at the interaction. The outgoing fronts may create new approaching pairs with other physical fronts, and they are bounded by $C_2 V_\chi(\tau-) I_\chi(\alpha,\beta).$
  Thus \[ \Delta Q_\chi(\tau) := Q_\chi(\tau+)-Q_\chi(\tau-) \leq - I_\chi(\alpha,\beta) + C_2 V_\chi(\tau-) I_\chi(\alpha,\beta). \]
  By using the smallness of initial data total variation and choosing $K$ sufficiently large, we get
  \begin{align*}
\Phi_\chi(\tau+)\le\Phi_\chi(\tau-)\le \Phi_\chi(0).
  \end{align*}
We next consider a modified same-family shock--rarefaction interaction. Let the incoming physical fronts again be denoted by 
$\alpha,\beta$. In the simplified solver the opposite family front is replaced by one nonphysical front 
$\Gamma_{\mathrm{np}}$ and $g(\Gamma_{\mathrm{np}})=g+1$. By interaction estimate, we get  
\[ |\mu_{\Gamma_{\mathrm{np}}}|\chi^{g+1}\le\Delta V_\chi(\tau)\le C_1 I_\chi(\alpha,\beta), \]
for some constant $C_1>0$. The incoming shock--rarefaction pair is an approaching physical pair, and its weighted contribution \(I_\chi(\alpha,\beta)\) is removed from \(Q_\chi\). The same estimate as above gives \[ \Delta Q_\chi(\tau) \leq -I_\chi(\alpha,\beta) + C_2V_\chi(\tau-)I_\chi(\alpha,\beta). \] Using the smallness condition and choosing $K$ sufficiently large
we get
  \begin{align*}
\Phi_\chi(\tau+)\le\Phi_\chi(\tau-)\le \Phi_\chi(0).
  \end{align*}
   Finally, we consider an interaction between a nonphysical front and a physical front. They cross each other and the nonphysical front keeps both its strength and its generation, and the physical front is unchanged. Hence $\Delta V_\chi(\tau)=0$. Moreover, \(Q_\chi\) contains only physical--physical approaching pairs. Since no physical front is changed and no new physical front is created, we also have $\Delta Q_\chi(\tau)=0$.  Thus \[ \Delta\Phi_\chi(\tau)=0. \] We have therefore shown that \(\Phi_\chi\) is non increasing at every interaction time.
  Since all initial fronts have generation $1$, one has $Q_\chi(0)\le V_\chi(0)^2$. Hence,
\begin{align*}
    V_\chi(t)\le\Phi_\chi(t)\le V_\chi(0)+KV_\chi(0)^2\le C,
\end{align*}
  where constant $C$ is depending only on initial data and $K$. This completes the proof. \end{proof}

\section{Explicit example for infinite interaction}\label{inf-front-accu}
For simplicity, we choose the $\gamma=1$ pressure laws $p(v)=\frac{1}{v^\gamma}$, one can make the example for any $\gamma>1$. We define the polygonal approximation of the pressure $p(v)=\frac{1}{v},$ for $v>0$,
\begin{equation} p_{n}(v)=\frac{v-v_{i}^{n}}{v_{i+1}^{n}-v_{i}^{n}}p\left(v_{i+1}^{n}\right)+\frac{v_{i+1}^{n}-v}{v_{i+1}^{n}-v_{i}^{n}}p\left(v_{i}^{n}\right) \quad \mbox{ for } v\in\left[v_{i}^{n}, v_{i+1}^{n}\right]. \end{equation} where $v_{i}^{n}\in\mathcal{V}=\{v_{i}^{n}: (v_{i+1}^{n}-v_{i}^{n})(p(v_{i}^{n})-p(v_{i+1}^{n}))=(v_{1}^{n}-v_{0}^{n})(p(v_{0}^{n})-p(v_{1}^{n}))\}$, for all $i\in\mathbb{Z}$, and we choose for some fix $n\in\mathbb{N}$ 
\begin{equation*}
v_0^n=1,\qquad v_1^n=1+\frac1{2^n}:=a_n.
\end{equation*}
we can see that for $p(v)=\frac{1}{v}$, we get $v_i^n:=a_n^i$ for $i\in\mathbb{Z}$.
Since $p(v_i^n)=a_n^{-i}$, we may write
\begin{align}
     p_n'(v)
        &=\frac{p(v_{i+1}^n)-p(v_i^n)}
        {v_{i+1}^n-v_i^n}
        =\frac{a_n^{-i-1}-a_n^{-i}}
        {a_n^{i+1}-a_n^i}=
        -a_n^{-2i-1}.
\end{align}
For choosing initial data we consider two points $x_0< x_1$ such that
 \begin{align}
  (v^n,u^n)(0,x)=
        \begin{cases}
        (v_l,u_l), & x<x_0,\\
        (v_m,u_m), & x_0<x<x_1,\\
        (v_r,u_r), & x>x_1,
        \end{cases}   
 \end{align}
 where for a fix integer $k\ge4$ we choose \[
        v_l:=v_k^n=a_n^k,\qquad 
        u_l=1,\qquad v_m:=v_0^n=1,\qquad
       % V_1:=v_1^n=a_n,\qquad  
       v_r:=v_2^n=a_n^2,
\]
now we choose $u_{m}, u_{r}$, such that the Riemann problem at $x_{0}$ have $1$-family shock, denoted as $\beta$, and the Riemann problem at $x_{1}$ consists of two $1$-family shock rarefaction fronts, denoted as $\gamma_{1}, \gamma_{2}$, thus 
\[
        \beta:\ (v_{l}, u_{l})\to (v_m,u_m),\qquad
        \gamma_1:\ (v_m,u_m)\to (v_m^*,u_m^*) ,\qquad
        \gamma_2:\ (v_m^*,u_m^*)\to (v_r,u_r),
\]
where $v_m^*:=v_{1}^{n}=a_{n}$, which directly comes from the piecewise affine approximation of $p(v)$. Let $c_{\beta}, c_{1}$, and $c_{2}$ are the speed of $\beta, \gamma_{1}$, and $\gamma_{2}$, respectively. 

\begin{align}
        c_\beta
        &:=
        -\sqrt{\frac{p_n(v_m)-p_n(v_l)}{v_l-v_m}}
        =
        -\sqrt{\frac{1-a_n^{-k}}{a_n^k-1}}
        =
        -a_n^{-k/2},                                      \\
        c_1&:=-a_n^{-1/2},\qquad
        c_2:=-a_n^{-3/2}.
\end{align}
Since $k\ge4$, we have $c_1<c_2<c_\beta<0$. We have fixed the $u_l=1$, hence we can choose $u_{m}, u_m^*$, and $u_r$ by the Rankine-Hugoniot condition
\begin{align}
        u_m-u_l&=-c_\beta(v_m-v_l),\\
        u_m^*-u_m&=-c_1(v_m^*-v_m),\\
       u_r-u_m^*&=-c_2(v_r-v_m^*).
\end{align}
Since $x_{0}<x_{1}$, and $\beta$ is located at $x_0$,  and $\gamma_1, \gamma_2$ at $x_1$, hence from there spatial order and speed order $c_1<c_2<c_\beta<0$, we know that $\beta$ and $\gamma_1$ will interact first at $\tau_{1}>0$. To describe the interaction pattern, set $v_*:=v_m^*=v_1^n=a_n$, $c_1:=a_n^{-1/2}$, and $c_2:=a_n^{-3/2}$. Let $\xi_1:=v_m$, $\eta_1:=u_m$, $\zeta_1:=v_r$, and $\omega_1:=u_r$. Suppose that before the $j$-th interaction the fronts are
\[
        \beta_j:\ (v_l,u_l)\to(\xi_j,\eta_j),\qquad
        \gamma_1^j:\ (\xi_j,\eta_j)\to(v_*,u_j^*),\qquad
        \gamma_2^j:\ (v_*,u_j^*)\to(\zeta_j,\omega_j),
\]
where $\xi_j\in[v_m,v_*]$, $\zeta_j\in[v_*,v_r]$, and
\begin{align}
\eta_j-u_l&=-c_{\beta_j}(\xi_j-v_l)\\
        u_j^*-\eta_j&=-c_1(v_*-\xi_j),\\
        \omega_j-u_j^*&=-c_2(\zeta_j-v_*).
\end{align} 
Thus $\gamma_1^j$ and $\gamma_2^j$ are $1$-rarefaction fronts with speeds $-c_1$ and $-c_2$, respectively.

At the interaction between $\beta_j$ and $\gamma_1^j$, the outgoing fronts are
\[
        \beta_j+\gamma_1^j\longrightarrow \beta_{j+1}+\delta_j,
\]
where
\[
        \beta_{j+1}:\ (v_l,u_l)\to(\xi_{j+1},\eta_{j+1}),\qquad
        \delta_j:\ (\xi_{j+1},\eta_{j+1})\to(v_*,u_j^*).
\]
Here $\xi_{j+1}$, and $\eta_{j+1}$, is determined by the Rankine--Hugoniot conditions
\begin{align}
        \eta_{j+1}-u_l
        &=-c_{\beta_{j+1}}(\xi_{j+1}-v_l),\\
        u_j^*-\eta_{j+1}
        &=-c_1(v_*-\xi_{j+1}),
\end{align}
where
$c_{\beta_{j}}:=-\sqrt{\frac{p_n(\xi_j)-p_n(v_l)}{v_l-\xi_j}}$. Next, we will show that $\xi_{j+1}\in(\xi_j,v_*)$. Let
\[
        L:=v_l=a_n^k,\qquad v_*:=v_1^n=a_n,\qquad %c_1:=a_n^{-1/2}.
\]
We define $D(\xi):=\sqrt{\bigl(p_n(\xi)-p_n(v_l)\bigr)(v_l-\xi)}$, for $\xi\in[1,\infty)$. The $1$-shock front  $\beta_{j+1}:\ (v_l,u_l)\to(\xi_{j+1},\eta_{j+1})$ given by
\[
        \eta_{j+1}=u_l-D(\xi_{j+1}),
\]
and the outgoing \(2\)-shock $\delta_j:\ (\xi_{j+1},\eta_{j+1})\to(v_*,u_j^*)$ is given by
\[
        \eta_{j+1}=u_j^*+C_1(v_*-\xi_{j+1}).
\]
Now we define, 
\[ \Phi_j(\xi):=u_l-D(\xi)-u_j^*-c_1(v_*-\xi),
        \qquad \xi\in[\xi_j,\infty)\]
and \(\xi_{j+1}\) is the zero of $\Phi_j(\xi)$.
Since \(\gamma_1^j:(\xi_j,\eta_j)\to(v_*,u_j^*)\) is a \(1\)-rarefaction front, we have
\[
        u_j^*-\eta_j=C_1(v_*-\xi_j),
        \qquad
        \eta_j=u_l-D(\xi_j).
\]
Hence
\[
        \Phi_j(\xi_j)=-2c_1(v_*-\xi_j)<0.
\]
We now check the sign at \(v_*\). Since \(p_n\) is affine on \([1,a_n]\), one has
\[
        p_n(\xi)-p_n(v_l)
        =
        \frac{a_n+1}{a_n}-a_n^{-k}-\frac{\xi}{a_n}.
\]
A direct differentiation gives
\[
        -D'(\xi)
        =
        \frac{L+a_nB-2\xi}{2a_nD(\xi)},
        \qquad
        B:=\frac{a_n+1}{a_n}-a_n^{-k}.
\]
Moreover,
\begin{align}
        (L+a_nB-2\xi)^2-4a_nD(\xi)^2
        &=(L-a_nB)^2>0.
\end{align}
Therefore \(-D'(\xi)>c_1\) for \(\xi\in[\xi_j,v_*]\). Consequently,
\begin{align}
        \Phi_j(v_*)
        &=
        D(\xi_j)-D(v_*)-C_1(v_*-\xi_j)        \notag\\
        &=
        \int_{\xi_j}^{v_*}\bigl(-D'(\xi)-C_1\bigr)\,d\xi>0.
\end{align}
Thus \(\Phi_j(\xi_j)<0<\Phi_j(v_*)\). Hence there exists
\begin{align}\label{xi_j+1}
   \xi_{j+1}\in(\xi_j,v_*). 
\end{align}
Since also
\[
        \Phi_j'(\xi)=-D'(\xi)+C_1>0,
\]
this zero is unique. Therefore the right state of the outgoing \(1\)-shock remains strictly inside the grid interval \([v_m,v_*]\). We now describe the interaction between \(\delta_j\) and \(\gamma_2^j\).
Recall that
\begin{align}
  \delta_j:\ (\xi_{j+1},\eta_{j+1})\to(v_*,u_j^*),
        \qquad
        \gamma_2^j:\ (v_*,u_j^*)\to(\zeta_j,\omega_j), 
\end{align}

where $\zeta_j\in[v_*,v_r]$. The incoming states satisfy
\begin{align}\label{incoming}
        u_j^*-\eta_{j+1}&=-c_1(v_*-\xi_{j+1}),\\\label{incoming-2}
        \omega_j-u_j^*&=c_2(\zeta_j-v_*).
\end{align}
We claim that the outgoing \(1\)-wave necessarily crosses the grid point \(v_*\), and hence splits there. In that case the outgoing fronts are
\[
        \delta_j+\gamma_2^j
        \longrightarrow
        \gamma_1^{j+1}+\gamma_2^{j+1}+\delta_j',
\]
where
\[
        \gamma_1^{j+1}:\ (\xi_{j+1},\eta_{j+1})\to(v_*,u_{j+1}^*),
\]
\[
        \gamma_2^{j+1}:\ (v_*,u_{j+1}^*)\to(\zeta_{j+1},\omega_{j+1}),
        \qquad
        \delta_j':\ (\zeta_{j+1},\omega_{j+1})\to(\zeta_j,\omega_j).
\]
We calculate the $\zeta_{j+1}$ from Rankine--Hugoniot relations for the outgoing fronts
\begin{align}\label{outgoing}
        u_{j+1}^*-\eta_{j+1}&=c_1(v_*-\xi_{j+1}),\\\label{outgoing2}
        \omega_{j+1}-u_{j+1}^*&=c_2(\zeta_{j+1}-v_*),\\\label{outgoing 3}
        \omega_j-\omega_{j+1}&=-c_2(\zeta_j-\zeta_{j+1}).
\end{align}
 By using \eqref{incoming}-\eqref{incoming-2}, we get
 \begin{align*}
     \omega_j=\eta_{j+1}+c_2(\zeta_j-v_*)-c_1(v_*-\xi_{j+1}),
 \end{align*}
and by using \eqref{outgoing}-\eqref{outgoing2}, we get
\begin{align*}
     \omega_{j+1}=\eta_{j+1}+c_1(v_*-\xi_{j+1})+c_2(\zeta_{j+1}-v_*),
 \end{align*}
 now by substituting these in \eqref{outgoing 3},
\begin{align}\label{recur}
        \zeta_{j+1}
        &=\zeta_j-\frac{c_1}{c_2}(v_*-\xi_{j+1})
          =\zeta_j-a_n(v_*-\xi_{j+1}),
\end{align}
since we assume that splitting occured at $j$-th interaction, thus $\zeta_{j+1}>v_*$, then we get
\begin{align}
  a_n(v_*-\xi_{j+1})<\zeta_j-v_*.  
\end{align}
We now prove that this inequality holds for every \(j\). For simplicity choose \(k=4\). Set
\[
        x_j:=v_*-\xi_j,\qquad y_j:=\zeta_j-v_* .
\]
Moreover, whenever the splitting occurs at the \(j\)-th step, \eqref{recur} gives
\[
        y_{j+1}=y_j-a_nx_{j+1}.
\]
We first estimate \(x_{j+1}\) in terms of \(x_j\). Recall that \(\xi_{j+1}\) is determined by
\[
        D(\xi_j)-D(\xi_{j+1})=c_1(x_j+x_{j+1}),
\]
where
\[
        D(\xi):=\sqrt{\bigl(p_n(\xi)-p_n(v_l)\bigr)(v_l-\xi)}.
\]
Since \(k=4\), \(v_l=a_n^4\), and for \(\xi\in[1,a_n]\),
\[
        p_n(\xi)-p_n(v_l)
        =
        \frac{a_n+1}{a_n}-a_n^{-4}-\frac{\xi}{a_n}.
\]
A direct computation gives
\[
        -D'(\xi)
        =
        \frac{a_n^4+a_nB-2\xi}{2a_nD(\xi)},
        \qquad
        B:=\frac{a_n+1}{a_n}-a_n^{-4},
\]
and
\[
        \frac{d}{d\xi}\bigl(-D'(\xi)\bigr)
        =
        \frac{(a_n^4-a_nB)^2}{4a_n^2D(\xi)^3}>0.
\]
Hence \(-D'\) is increasing on \([1,a_n]\). Therefore
\[
        -D'(\xi)\le -D'(a_n)
        =
        \frac{a_n^4+1}{2a_n^{5/2}}
        =:M_n .
\]
It follows that
\[
        D(\xi_j)-D(\xi_{j+1})
        \le M_n(\xi_{j+1}-\xi_j)
        =
        M_n(x_j-x_{j+1}).
\]
Combining this with \(D(\xi_j)-D(\xi_{j+1})=c_1(x_j+x_{j+1})\), we obtain
\[
        x_{j+1}
        \le
        \frac{M_n-c_1}{M_n+c_1}\,x_j.
\]
Since \(c_1=a_n^{-1/2}\), this gives
\[
        x_{j+1}\le \lambda_n x_j,
        \qquad
        \lambda_n:=
        \frac{M_n-c_1}{M_n+c_1}
        =
        \left(\frac{a_n^2-1}{a_n^2+1}\right)^2.
\]
In particular, \(0<\lambda_n<1/2\). We now prove the splitting by induction. For \(j=1\), \(x_1=v_*-v_m=a_n-1\), while
\[
        y_1=v_r-v_*=a_n^2-a_n=a_nx_1,
\]
\[
        a_nx_2\le a_n\lambda_n x_1<a_nx_1=y_1,
\]
so the outgoing \(1\)-wave crosses \(v_*\), and the first splitting occurs.

Assume now that the splitting has occurred up to the \(j\)-th step. Then
\[
        y_j
        =
        y_1-a_n\sum_{\ell=2}^{j}x_\ell .
\]
Using \(x_\ell\le \lambda_n^{\ell-1}x_1\), we obtain
\[
        y_j-a_nx_{j+1}
        \ge
        a_nx_1
        \left(
        1-\sum_{\ell=1}^{j}\lambda_n^\ell
        \right).
\]
Since \(\lambda_n<1/2\),
\[
        \sum_{\ell=1}^{\infty}\lambda_n^\ell
        =
        \frac{\lambda_n}{1-\lambda_n}<1.
\]
Hence
\[
        y_j-a_nx_{j+1}>0,
\]
that is,
\[
        a_n(v_*-\xi_{j+1})<\zeta_j-v_*.
\]
Therefore the outgoing \(1\)-wave crosses \(v_*\) at the \((j+1)\)-th step as well. Consequently the splitting continues for all \(j\), and
\[
        v_*<\zeta_{j+1}<\zeta_j\le v_r
        \qquad\text{for every }j\ge1.
\]
Thus the right endpoint of the second \(1\)-rarefaction front also changes. In particular, whenever
\[
        a_n(v_*-\xi_{j+1})<\zeta_j-v_*,
\]
we have
\[
        v_*<\zeta_{j+1}<\zeta_j.
\]
Hence \(\gamma_2^{j+1}\) remains inside the same grid interval \([v_*,v_r]\), but its right endpoint moves into the interior. We now prove that the above infinite interaction pattern accumulates in finite time.
Let \(t_j\) denote the time at which the pair \(\gamma_1^j,\gamma_2^j\) is created, and let \(\tau_j\) denote the time at which \(\beta_j\) interacts with \(\gamma_1^j\). We set
\[
        s_j:=\tau_j-t_j.
\]
For \(j=1\), we have \(t_1=0\), and since \(\beta_1\) starts from \(x_0\) while \(\gamma_1^1,\gamma_2^1\) start from \(x_1\), 
\begin{equation}\label{s1}
        s_1=\frac{x_1-x_0}{c_{\beta_1}+c_1}>0.
\end{equation}
Here \(c_{\beta_1}\) is the speed of \(\beta_1\).

At time \(\tau_j\), the distance between \(\gamma_1^j\) and \(\gamma_2^j\) is
\[
        (c_1-c_2)s_j,
\]
because their signed speeds are \(-c_1\) and \(-c_2\). Let \(b_j>0\) be the speed of the outgoing \(2\)-shock \(\delta_j\). Then \(\delta_j\) catches \(\gamma_2^j\) after the time
\begin{equation}\label{dj}
        d_j:=t_{j+1}-\tau_j
        =
        \frac{c_1-c_2}{b_j+c_2}s_j .
\end{equation}

Let \(\widetilde c_{\beta_j}\) be the signed speed of the outgoing \(1\)-shock during the time interval \([\tau_j,t_{j+1}]\), and let \(c_{\beta_{j+1}}\) be the signed speed of \(\beta_{j+1}\) during \([t_{j+1},\tau_{j+1}]\). From the fact that the right state of every \(\beta_j\) lies in \([v_m,v_*]\), we have
\begin{equation}\label{beta-speed-lower}
        c_{\beta_j}\ge -a_n^{-k/2}>-c_2,
        \qquad
        \widetilde c_{\beta_j}\ge -a_n^{-k/2}>-c_2 .
\end{equation}
At time \(t_{j+1}\), the new front \(\gamma_1^{j+1}\) is created at the position of \(\delta_j\). Hence its distance from the shock \(\beta_{j+1}\) is
\[
        (b_j-\widetilde c_{\beta_j})d_j .
\]
Therefore
\begin{align}
        s_{j+1}
        &=
        \frac{b_j-\widetilde c_{\beta_j}}{c_{\beta_{j+1}}+c_1}d_j \notag\\
        &=
        \frac{c_1-c_2}{b_j+c_2}
        \frac{b_j-\widetilde c_{\beta_j}}{c_{\beta_{j+1}}+c_1}s_j .
        \label{sj-recur}
\end{align}
Using \eqref{beta-speed-lower}, we get
\[
        b_j-\widetilde c_{\beta_j}<b_j+c_2,
        \qquad
        c_{\beta_{j+1}}+c_1\ge c_1-a_n^{-k/2}.
\]
Thus \eqref{sj-recur} gives
\begin{equation}\label{sj-contraction}
        s_{j+1}\le \kappa s_j,
        \qquad
        \kappa:=\frac{c_1-c_2}{c_1-a_n^{-k/2}}.
\end{equation}
Since \(k\ge4\), we have \(-a_n^{-k/2}>-c_2\), or equivalently \(a_n^{-k/2}<c_2\). Hence
\[
        0<c_1-c_2<c_1-a_n^{-k/2},
\]
and therefore
\begin{equation}\label{kappa-less-one}
        0<\kappa<1.
\end{equation}
By induction from \eqref{sj-contraction},
\begin{equation}\label{sj-bound}
        s_j\le \kappa^{j-1}s_1.
\end{equation}
 From \eqref{dj} and \(b_j>0\), we have
\begin{equation}\label{dj-bound}
        d_j
        =
        \frac{c_1-c_2}{b_j+c_2}s_j
        \le
        \frac{c_1-c_2}{c_2}s_j .
\end{equation}
Set
\[
        \mu:=\frac{c_1-c_2}{c_2}.
\]
Then \(d_j\le\mu s_j\), and using \eqref{sj-bound}, we obtain
\begin{align}
        t_{j+1}-t_j
        &=s_j+d_j
        \le (1+\mu)s_j
        \le (1+\mu)\kappa^{j-1}s_1 .
        \label{time-step-bound}
\end{align}
Therefore
\begin{align}
        \sum_{j=1}^{\infty}(t_{j+1}-t_j)
        &\le
        (1+\mu)s_1\sum_{j=1}^{\infty}\kappa^{j-1}
        =
        \frac{(1+\mu)s_1}{1-\kappa}<\infty.
        \label{finite-time-sum}
\end{align}
Hence the sequence \((t_j)\) converges to a finite time
\[
        T_*:=\lim_{j\to\infty}t_j<\infty.
\]
Since each cycle contains the two interactions \(\beta_j+\gamma_1^j\) and \(\delta_j+\gamma_2^j\), infinitely many interactions occur before \(T_*\).

\begin{center}
\begin{tikzpicture}[scale=1,every node/.style={font=\tiny}]

% axes
\draw[->] (-.8,0)--(6.4,0);
\draw[->] (0,-.3)--(0,7.1);
\draw (6.55,0) node{$x$};
\draw (-.25,7.15) node{$t$};

% points
\coordinate (X0) at (3.55,0);
\coordinate (X1) at (5.67,0);

\coordinate (A1) at (3.00,1.00);
\coordinate (B1) at (3.97,2.49);

\coordinate (A2) at (2.10,3.19);
\coordinate (B2) at (2.78,4.23);

\coordinate (A3) at (1.57,4.68);
\coordinate (B3) at (2.01,5.35);

\coordinate (A4) at (1.26,5.63);
\coordinate (B4) at (1.53,6.05);

\coordinate (A5) at (1.16,6.19);
\coordinate (B5) at (1.29,6.40);

\coordinate (A6) at (1.13,6.46);
\coordinate (B6) at (1.19,6.54);

\coordinate (Ainf) at (1.12,6.65);

% extensions of S_2 fronts, parallel to each other
\coordinate (B1e) at (4.05,2.61);
\coordinate (B2e) at (2.86,4.35);
\coordinate (B3e) at (2.09,5.47);
\coordinate (B4e) at (1.61,6.17);
\coordinate (B5e) at (1.37,6.52);
\coordinate (B6e) at (1.27,6.66);

% time guide lines
\draw[dashed] (-.55,1.00)--(A1);
\draw[dashed] (-.55,2.49)--(B1);
\draw[dashed] (-.55,3.19)--(A2);
\draw[dashed] (-.55,4.23)--(B2);
\draw[dashed] (-.55,4.68)--(A3);
\draw[dashed] (-.55,5.35)--(B3);
\draw[dashed] (-.55,5.63)--(A4);
\draw[dashed] (-.55,6.65)--(Ainf);

\draw (-.82,1.00) node{$\tau_1$};
\draw (-.80,2.49) node{$t_2$};
\draw (-.82,3.19) node{$\tau_2$};
\draw (-.80,4.23) node{$t_3$};
\draw (-.82,4.68) node{$\tau_3$};
\draw (-.80,5.35) node{$t_4$};
\draw (-.82,5.63) node{$\tau_4$};
\draw (-.82,6.65) node{$T_*$};

% S_1 shock beta_j, speed increasing after each interaction
\draw[red] (X0)--(A1);
\draw[red] (A1)--(A2);
\draw[red] (A2)--(A3);
\draw[red] (A3)--(A4);
\draw[red] (A4)--(A5);
\draw[red] (A5)--(A6);
\draw[red] (A6)--(Ainf);

% rightmost rarefaction line gamma_2^j, same speed
\draw[blue] (X1)--(Ainf);

% gamma_1^j fronts, parallel
\draw[blue] (X1)--(A1);
\draw[blue] (B1)--(A2);
\draw[blue] (B2)--(A3);
\draw[blue] (B3)--(A4);
\draw[blue] (B4)--(A5);
\draw[blue] (B5)--(A6);

% S_2 shocks, parallel and extended beyond gamma_2 line
\draw[red] (A1)--(B1e);
\draw[red] (A2)--(B2e);
\draw[red] (A3)--(B3e);
\draw[red] (A4)--(B4e);
\draw[red] (A5)--(B5e);
\draw[red] (A6)--(B6e);

% nodes
\foreach \P in {X0,X1,A1,B1,A2,B2,A3,B3,A4,B4,A5,B5,A6,B6,Ainf}
{
    \fill (\P) circle (0.35pt);
}

% x labels
\draw (X0)+(0,-.35) node{$x_0$};
\draw (X1)+(0,-.35) node{$x_1$};

% front labels
\draw (3.1,.55) node{$\beta_1$};
\draw (2.2,2.2) node{$\beta_2$};
\draw (1.6,4) node{$\beta_3$};

\draw (4.45,.80) node{$\gamma_1^1$};
\draw (3.05,3.10) node{$\gamma_1^2$};
\draw (2.10,4.7) node{$\gamma_1^3$};

\draw (5.20,1.10) node{$\gamma_2^1$};
\draw (3.7,3.20) node{$\gamma_2^2$};
\draw (2.5,4.90) node{$\gamma_2^3$};

\draw (3.55,2.1) node{$\delta_1$};
\draw (2.35,3.85) node{$\delta_2$};
\draw (1.72,5.2) node{$\delta_3$};

\end{tikzpicture}
\captionof{figure}{Illustration of infinite interaction of the \(S_1\)-shock fronts \(\beta_j\), which interact successively with the \(1\)-rarefaction fronts and accumulate at time \(T_*\).}
\label{fig:xt-cascade}
\end{center}

\noindent\textbf{Acknowledgement.}
%PG was supported by  the National Science Center (Poland), project 2023/49/ B/ST1/02797. 
Author was supported by the ``Excellence Initiative Research University (IDUB)'' program at the University of Warsaw (Poland) and National Science Center (Poland). 
%A\'SG was supported by   the National Science Center (Poland), project 2023/50/A/ ST1/00447.

\end{document}